\newtheorem{theorem}{Theorem}[section]
\newtheorem{lemma}[theorem]{Lemma}
\newtheorem{proposition}[theorem]{Proposition}
\theoremstyle{remark}
\newtheorem{remark}[theorem]{Remark}
\newtheorem{definition}[theorem]{Definition}
\numberwithin{equation}{section}
\def\le{\,{\leqslant}\,}
\def\ge{\,{\geqslant}\,}
\def\la{{\lambda}}
\def\N{{\mathbb{N}}}
\def\Z{{\mathbb{Z}}}
\def\R{{\mathbb{R}}}
\def\C{{\mathbb{C}}}
\def\T{{\mathbb{T}}}
\def\Bc{{\mathcal{B}}}
\def\Hc{{\mathcal{H}}}
\def\Sc{{\mathcal{S}}}
\def\Xc{{\mathcal{X}}}
\def\Ac{{\mathcal{A}}}
\def\T{{\mathcal{T}}}
\def\tr{{\mathrm{Tr}\,}}
\def\sym{{\,\mathrm{Sym}}}
\newcommand{\norm}[1]{\Vert#1\Vert}
\newcommand{\bignorm}[1]{\bigl\Vert#1\bigr\Vert}
\newcommand{\Bignorm}[1]{\Bigl\Vert#1\Bigr\Vert}
\begin{document}

\address{C.L.: Laboratoire de Mathématiques de Besan\c con, UMR 6623, CNRS, Universit\'e Bourgogne Franche-Comt\'e,
25030 Besan\c{c}on Cedex, FRANCE}\email{clemerdy@univ-fcomte.fr}
\address{A.S.: Department of Mathematics and Statistics, MSC01 1115, University of New Mexico, Albuquerque, NM 87131, USA}
\email{skripka@math.unm.edu}

\date{\today}

\title[Higher order differentiability in Schatten norms]{Higher order differentiability
of operator functions in Schatten norms}

\author[C. Le Merdy]{Christian Le Merdy$^{*}$}

\author[A. Skripka]{Anna Skripka$^{**}$}

\thanks{\footnotesize $^{*}$Research supported by the French ``Investissements d'Avenir" program,
project ISITE-BFC (contract ANR-15-IDEX-03)}
\thanks{\footnotesize $^{**}$Research supported in part by NSF grants DMS-1500704 and DMS-1554456}

\subjclass[2010]{47B49, 47B10, 
46L52}

\keywords{Differentiation of operator functions, Schatten-von Neumann classes}

\maketitle

\begin{abstract}
We establish the following results on higher order $\Sc^p$-differentiability, $1<p<\infty$, of the operator
function arising from a continuous scalar function $f$ and self-adjoint operators defined on a fixed separable Hilbert space:
\begin{enumerate}[(i)]
\item $f$ is $n$ times continuously Fr\'{e}chet $\Sc^p$-differentiable at every bounded self-adjoint operator if and only if $f\in C^n(\R)$;
\item if $f',\ldots,f^{(n-1)}\in C_b(\R)$ and $f^{(n)}\in C_0(\R)$, then $f$ is $n$ times continuously
Fr\'{e}chet $\Sc^p$-differentiable at every self-adjoint operator;
\item if $f',\ldots,f^{(n)}\in C_b(\R)$, then $f$ is $n-1$ times continuously Fr\'{e}chet $\Sc^p$-differen-tiable and $n$ times G\^{a}teaux $\Sc^p$-differentiable at every self-adjoint operator;
\end{enumerate}
We also prove that if $f\in B_{\infty1}^n(\R)\cap B_{\infty1}^1(\R)$, then $f$ is $n$ times continuously
Fr\'{e}chet $\Sc^q$-differentiable, $1\le q<\infty$, at every self-adjoint operator.
These results generalize and extend analogous results of \cite{KPSS} to arbitrary $n$ and unbounded operators as well as substantially extend the results of \cite{ACDS,CLMSS,Peller2006} on higher order $\Sc^p$-differentiability of $f$ in a certain Wiener class, G\^{a}teaux $\Sc^2$-differentiability of $f\in C^n(\R)$ with $f',\ldots,f^{(n)}\in C_b(\R)$, and G\^{a}teaux $\Sc^q$-differentiability of $f$ in the intersection of the Besov classes $B_{\infty1}^n(\R)\cap B_{\infty1}^1(\R)$. As an application, we extend $\Sc^p$-estimates for operator Taylor remainders to a broad set of symbols. Finally, we establish explicit formulas for Fr\'{e}chet differentials and G\^{a}teaux derivatives.
\end{abstract}

\section{Introduction}
\label{s1}

Differentiability is one of natural properties studied in theory of functions, in particular, operator functions. Pioneering results on differentiability of operator functions were obtained in \cite{DK1956} under restrictive assumptions on functions and operators. These results were substantially refined and extended in \cite{BS3,S1977,Peller1985,Peller1990,ABF,KS96,Ped,KS,DePS,Peller2006,ACDS,KPSS,PSS} in response to development of perturbation theory, but some problems remained open. In this paper we answer several questions on higher order differentiability of operator functions in Schatten $\Sc^p$-norms, $1<p<\infty$. The exponent $p$ in this paper is reserved for a number in the interval $(1,\infty)$.

Initially derivatives of operator functions were studied in the operator norm, but the same results hold in the $\Sc^q$-norm, $1\le q<\infty$. It was proved in \cite[Theorem 2]{Peller1990} that if $f$ is an element of the Besov space $B_{\infty1}^1(\R)$, then $f$ is G\^{a}teaux differentiable with respect to the $\Sc^q$-norm and the operator norm at every self-adjoint operator, yet a stronger sufficient condition was derived in \cite{ABF} in terms of complex analysis. It follows from \cite{BS3} and properties of Besov spaces, that $f\in B_{\infty1}^1(\R)$ is Fr\'{e}chet $\Sc^q$-differentiable, $1\le q<\infty$, at every self-adjoint operator (see Theorem \ref{Th33Besov}) and Fr\'{e}chet differentiable with respect to the operator norm at every bounded self-adjoint operator. The necessary condition $f\in B_{11}^1(\R)$ for differentiability of $f$ in the $\Sc^1$-norm was obtained in \cite[Theorem 3]{Peller1990} via nuclearity criterion for Hankel operators. In particular, the condition $f\in C^1(\R)$ is not sufficient for G\^{a}teaux $\Sc^1$-differentiability of $f$ \cite[Theorem 8]{Peller1985} (see also \cite{YuBF}). It was established in
\cite[Theorem 5.6]{Peller2006} that if $f$ is in the intersection of Besov classes $B_{\infty1}^n(\R)\cap B_{\infty1}^1(\R)$,
then $f$ is $n$ times G\^{a}teaux differentiable with respect to the $\Sc^q$-norm and the operator norm. Notes on higher order Fr\'{e}chet $\Sc^1$-differentiability follow below.

The set of Fr\'{e}chet differentiable functions with respect to the $\Sc^p$-norm is larger than the set of $\Sc^1$-differentiable functions.
It was established in \cite[Theorem 7.17]{KPSS} that $f$ is Fr\'{e}chet $\Sc^p$-differentiable at every bounded self-adjoint operator if and only if $f\in C^1(\R)$, while it was demonstrated in \cite[Example 7.20]{KPSS} that the assumption ``$f\in C^1(\R)$ and $f'$ is bounded" is not sufficient for Fr\'{e}chet differentiability of $f$ at an arbitrary unbounded operator.
It was proved in \cite[Theorem 5.5]{ACDS} that a function $f$ is $n$ times Fr\'{e}chet $\Sc^p$-differentiable (in fact, differentiable with respect to any symmetric operator ideal norm  with so called property (F)) at every self-adjoint operator if $f$ is in the Wiener class $W_{n+1}(\R)$, which consists of $f\in C^n(\R)$ with $f^{(j)}$ the Fourier transform of a finite measure on $\R$, $j=0,\ldots,n+1$. It was proved in \cite[Theorem 4.1]{CLMSS} that a function $f$ in $C^n(\R)$ is $n$ times G\^{a}teaux $\Sc^2$-differentiable at every bounded self-adjoint operator and, under the additional assumption ``$f^{(j)}$ is bounded, $j=0,\ldots,n$", at every 
self-adjoint operator.

We prove (see Theorem \ref{Cor1}) that $f$ is $n$ times continuously Fr\'{e}chet $\Sc^p$-differentiable at every bounded self-adjoint operator if and only if $f\in C^n(\R)$, generalizing the differentiability result of \cite{KPSS} for $n=1$ to an arbitrary natural $n$ and establishing a new property of continuous dependence of the differential on the point of differentiation. We obtain new sufficient conditions for an arbitrary order continuous Fr\'{e}chet differentiability at an unbounded self-adjoint operator that significantly extend analogous results of \cite{ACDS,KPSS}. Namely, we prove that $f$ is $n$ times continuously Fr\'{e}chet $\Sc^p$-differentiable at every self-adjoint operator if $f\in C^n(\R)$, all derivatives $f',\ldots,f^{(n)}$ are bounded, and $f^{(n)}\in C_0(\R)$ (see Theorem \ref{C0}) as well as that $f$ is $n$ times continuously Fr\'{e}chet $\Sc^p$-differentiable at every self-adjoint operator if $f\in C^{n+1}(\R)$ and all derivatives $f',\ldots,f^{(n+1)}$ are bounded (see Theorem \ref{C(n+1)}).
We also prove (see Theorem \ref{Thmi}(ii)) that $f$ is $n$ times G\^{a}teaux $\Sc^p$-differentiable under the relaxed assumption ``$f\in C^n(\R)$ and all derivatives $f',\ldots,f^{(n)}$ are bounded" (the latter property is a necessary condition for $n$ times G\^{a}teaux $\Sc^p$-differentiability, see Proposition \ref{nc}), extending the result of \cite{CLMSS} from $\Sc^2$ to the general $\Sc^p$ and the consequence of \cite{Peller2006} from $f\in B_{\infty1}^n(\R)\cap B_{\infty1}^1(\R)$ to $f\in C^n(\R)$ with bounded derivatives. Finally, we prove (see Theorem \ref{Th33Besov}) that
every $f\in B_{\infty1}^n(\R)\cap B_{\infty1}^1(\R)$ is $n$ times continuously Fr\'{e}chet $\Sc^q$-differentiable, $1\le q<\infty$, substantially strengthening G\^{a}teaux $\Sc^q$-differentiability that follows from \cite{Peller2006}.

Study of operator derivatives has been mainly motivated by development of perturbation theory. Initially operator derivatives were calculated in the operator norm and appeared in Taylor-like approximations of operator functions (see, for instance, \cite{ASsurvey} for details and references). It was necessary to involve higher order derivatives to produce approximations with respective remainders in symmetrically normed ideals. Replacing the first and second order derivatives of operator functions with respect to the operator norm by the $\Sc^2$-norm derivatives allowed to extend the fundamental results of \cite{Krein,Kop} on Taylor approximations to much broader sets of functions in \cite{Peller2016,CLMSS}. In Theorem \ref{corpert} we apply our main results to extend the estimate of \cite[Theorem 4.1]{PSST} for the $n$th order Taylor approximation with bounded initial self-adjoint operator and $f\in C^n(\R)$ to an unbounded initial operator and $f\in C^n(\R)$ with bounded $f',\ldots,f^{(n)}$.

Advancement in the study of operator smoothness and differentiability has been based on techniques known under the name ``multiple operator integration" and developing since \cite{DK1956}.  There are two principal approaches to multiple operator integration on Schatten classes (see, e.g., \cite{ST} for details and references). The approach in \cite{CLS,Peller2006} depends on separation of variables of a symbol and applies to the entire set of symbols treated in this paper only when $p=2$; the approach in \cite{PSS} builds on harmonic analysis of UMD spaces. To establish higher order Fr\'{e}chet differentiability and include unbounded operators we combine advantages of both approaches and perform multi-stage approximations on symbols, perturbations, and generating operators. Our method differs from the methods of \cite{CLS,Peller2006} for G\^{a}teaux differentiability resting on the former approach, which was sufficient due to the weaker type of differentiability and either restriction to a smaller set of functions or to perturbations in $\Sc^2$, as well as the method of \cite{KPSS} for the first order Fr\'{e}chet differentiability resting on the latter approach, which was sufficient due to a stronger technical machinery available in the first order case and restriction to bounded operators.

We fix the following notations and conventions to be used throughout the paper. Let $\Bc_n(X_1\times\cdots\times X_n,Y)$ denote the space of bounded $n$-linear operators mapping the Cartesian product $X_1\times\cdots\times X_n$ of Banach spaces $X_1,\ldots,X_n$ to a Banach space $Y$, let $\Hc$ denote a separable Hilbert space, $\Sc^p$ the $p$th Schatten-von Neumann class of compact operators on $\Hc$, $1\le p<\infty$, $\Sc^p_{sa}$ the subset of self-adjoint elements of $\Sc^p$, and $\tr$ the canonical trace on the ideal $\Sc^1$. For basic properties of Schatten ideals see, e.g., \cite{Simon}. When the domain of a self-adjoint operator is not specified, it is assumed to be defined either on $\Hc$ or on a dense subspace of $\Hc$. For any $n\in\N$,
let $C^n(\R)$ denote the linear space of $n$ times continuously differentiable functions on $\R$.
Let $C_b(\R)$ denote the space of continuous bounded functions on $\R$,
$C_0(\R)$ the space of continuous functions on $\R$ which tend to $0$ at
$\pm\infty$, and $\text{Lip}(\R)$ the space of Lipschitz functions on $\R$.
Let $\sym_n$ denote the group of all permutations of the set $\{1,\ldots,n\}$.
For any $f\in C^n(\R)$, let $f^{[n]}\colon\R^{n+1}\to\C$
denote the $n$th order divided
difference of $f$. We recall that it
is defined recursively as follows:
\begin{align*}
&f^{[0]}(x_1):=f(x_1),\\
&f^{[n]}(x_1,\dots,x_{n+1}):=
\lim_{x\to x_{n+1}}
\frac{f^{[n-1]}(x_1,\dots,x_{n-1},x)-f^{[n-1]}(x_1,\dots,x_{n-1},x_n)}{x-x_n}.
\end{align*}
We also recall that $f^{[n]}$ is bounded provided that $f^{(n)}\in C_b(\R)$.

\section{Multiple operator integration}
\label{s2}

In this section we collect existing and derive new properties of
multiple operator integrals that are crucial for proofs of our main results.

G\^{a}teaux and Fr\'{e}chet derivatives
of our operator functions will be represented via multiple operator
integrals introduced in \cite[Definition 3.1]{PSS}. We recall this definition below.

Let $A_j=A_j^*$, $j=1,\ldots,n+1$, and denote
\[E_{l,r}^j = E_{A_j}\left(\left[ \frac lr, \frac {l + 1}r\right)\right),\]
for every~$r \in \N$, $l \in \Z$, and $j=1,\ldots,n+1$. Let~$n \in \N$ and
$1 \le p,p_i < \infty$, $i=1,\ldots,n$. Let ~$X_i \in \Sc^{p_i}$ and let
$\varphi\colon \R^{n + 1} \to \C$ be a bounded Borel function.

\begin{definition}
\label{MOIPSS}
Suppose that for every~$r \in \N$, the series
\begin{align*}
S_{\varphi, r} (X_1, \ldots, X_n )
:&= \sum_{l_1, \ldots,l_{n+1} \in \Z} \varphi \left( \frac {l_1}r, \ldots, \frac
{l_{n+1}}r \right) \, E_{l_1, r}^1 X_1 E_{l_2, r}^2 X_2 \cdots X_n E_{l_{n+1}, r}^{n+1}\\
&=\lim\limits_{N\rightarrow\infty}\sum_{\substack{|l_j|\le N\\ 1\le j\le n+1}}\varphi
\left( \frac {l_1}r,\ldots,\frac{l_{n+1}}r\right)\,E_{l_1,r}^1 X_1 E_{l_2,r}^2 X_2\cdots
X_n E_{l_{n+1},r}^{n+1}
\end{align*}
converges in the norm of~$\Sc^p$ and
$$
(X_1, \ldots, X_n ) \mapsto S_{\varphi, r} (X_1, \ldots, X_n ),\ \ r \in \N,
$$
is a sequence of bounded multilinear operators that map $\Sc^{p_1} \times \cdots \times
\Sc^{p_n} \to \Sc^p$.  If the sequence of
operators~$\left\{S_{\varphi,r}\right\}_{r=1}^\infty$ converges strongly to some bounded multilinear operator $T\colon
\Sc^{p_1} \times \cdots \times \Sc^{p_n} \to \Sc^p$, then $T$
is called the multiple operator integral
associated with~$\varphi$ and the operators~$A_1,\ldots,A_{n+1}$
(or the spectral measures~$E_{A_1},\ldots,E_{A_{n+1}}$),
and $T$ is denoted by $T_{\varphi}^{A_1,\ldots,A_{n+1}}$.
\end{definition}

We note that there are other multiple operator integral constructions (see, e.g., \cite{ST}), but the one described in Definition \ref{MOIPSS} allows to handle $\varphi=f^{[n]}$ for the largest set of functions treated in this paper, that is, for $f\in C^n(\R)$ with $f^{(n)}\in C_b(\R)$.

The following conditions for existence of bounded multiple operator
integrals and estimates for their norms are crucial in our proofs.

\begin{theorem}
\label{ThPSS}
Let $1<p<\infty$, $n\in\N$, $f\in C^n(\R)$, $f^{(n)}\in C_b(\R)$.
Let $A_1,\ldots,A_{n+1}$ be self-adjoint operators and let $X_1,\ldots,X_n\in\Sc^p$.
Then, $T_{f^{[n]}}^{A_1,\ldots,A_{n+1}}\in\Bc_n(\Sc^{pn}\times\cdots\times\Sc^{pn},\Sc^p)$ and there exists
$c_{p,n}>0$ such that
\begin{align}
\label{ThPSSi}
\big\|T_{f^{[n]}}^{A_1,\ldots,A_{n+1}}(X_1,\ldots,X_n)\big\|_p
&\le c_{p,n}\,\|f^{(n)}\|_\infty\|X_1\|_{np}\cdots\|X_n\|_{np}\\
\label{ThPSSii}
&\le c_{p,n}\,\|f^{(n)}\|_\infty\|X_1\|_p\cdots\|X_n\|_p.
\end{align}
\end{theorem}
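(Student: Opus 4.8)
The second inequality \eqref{ThPSSii} follows at once from \eqref{ThPSSi} and the elementary comparison $\|X\|_{np}\le\|X\|_p$ of Schatten norms, so the whole content is the existence of the strong limit defining $T_{f^{[n]}}^{A_1,\ldots,A_{n+1}}$ as an element of $\Bc_n(\Sc^{np}\times\cdots\times\Sc^{np},\Sc^p)$ together with \eqref{ThPSSi}. This is, in substance, the multilinear Schatten-boundedness theorem for divided-difference symbols of \cite{PSS}, and I sketch the route producing exactly the constant $c_{p,n}\|f^{(n)}\|_\infty$. The starting point is the Hermite--Genocchi representation
\begin{equation*}
f^{[n]}(\lambda_1,\ldots,\lambda_{n+1})=\int_{\Sigma_n}f^{(n)}\bigl(t_1\lambda_1+\cdots+t_{n+1}\lambda_{n+1}\bigr)\,d\sigma(t),
\end{equation*}
with $\Sigma_n=\{t\in\R^{n+1}:t_j\ge0,\ t_1+\cdots+t_{n+1}=1\}$ and $\sigma$ the natural measure of total mass $1/n!$. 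For band-limited $f^{(n)}$, say $f^{(n)}(s)=\int_\R e^{isu}\,d\nu(u)$ with $\nu$ a finite measure of compact support, this separates the variables of $f^{[n]}$; a direct Riemann-sum computation then shows that the strong limit in Definition \ref{MOIPSS} exists and
\begin{equation*}
T_{f^{[n]}}^{A_1,\ldots,A_{n+1}}(X_1,\ldots,X_n)=\int_{\Sigma_n}\!\int_\R e^{iut_1A_1}X_1e^{iut_2A_2}X_2\cdots X_ne^{iut_{n+1}A_{n+1}}\,d\nu(u)\,d\sigma(t),
\end{equation*}
whence, by the multiplicative Hölder inequality $\|Y_0X_1Y_1\cdots X_nY_n\|_p\le\prod_j\|Y_j\|_\infty\prod_i\|X_i\|_{np}$ applied to the unitaries $Y_j=e^{iut_jA_j}$, one gets the crude bound $\frac{1}{n!}|\nu|(\R)\,\|X_1\|_{np}\cdots\|X_n\|_{np}$.

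Replacing the transference-type constant $|\nu|(\R)$ by the sharp $\|f^{(n)}\|_\infty$ --- still for band-limited $f^{(n)}$ --- is the crucial step and, I expect, the main obstacle. It is exactly here that the harmonic analysis of UMD spaces is needed: the $\Sc^p$-valued multilinear multiplier $t\mapsto e^{iut_1A_1}X_1\cdots e^{iut_{n+1}A_{n+1}}$, integrated over $\Sigma_n$, is dominated, uniformly in $u$ and independently of dimension, by the $\Sc^p$-norm of the diagonal multilinear transform on $\R^n$ with symbol $\widehat{f^{(n)}}$, and that norm collapses to $\|f^{(n)}\|_\infty$ by the $n$-linear extension of the Potapov--Sukochev operator-Lipschitz estimate, whose base case $n=1$ is precisely $\|f(A)-f(B)\|_p\le c_p\|f'\|_\infty\|A-B\|_p$. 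Equivalently, one may run an induction on $n$, using the recursion for divided differences to write $T_{f^{[n]}}^{A_1,\ldots,A_{n+1}}$ as an $\Sc^p$-valued integral of $(n-1)$-linear multiple operator integrals with symbols $g^{[n-1]}$ obeying $\|g^{(n-1)}\|_\infty\lesssim\|f^{(n)}\|_\infty$, and reducing to the inductive hypothesis plus the base case; this is the content that must be imported from \cite{PSS}.

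It remains to pass from band-limited symbols to a general $f\in C^n(\R)$ with $f^{(n)}\in C_b(\R)$, and to admit unbounded $A_j$. For this I would mollify and band-limit: choose band-limited $g_k$ with $\|g_k\|_\infty\le\|f^{(n)}\|_\infty$ and $g_k\to f^{(n)}$ uniformly on compacta, and let $f_k$ be antiderivatives, so that $f_k^{[n]}\to f^{[n]}$ uniformly on compacta by Hermite--Genocchi. Since every $X_i\in\Sc^{np}$ is compact, truncating the $X_i$ to finite rank confines the spectral parameters entering the sums $S_{f^{[n]},r}$ to a fixed compact set up to an error of any prescribed size; the estimates and convergence already established for the $f_k$ then transfer to $f^{[n]}$ by a three-$\varepsilon$ argument resting on the uniform bound $c_{p,n}\|f^{(n)}\|_\infty$, which yields simultaneously the strong convergence of $\{S_{f^{[n]},r}\}_r$ and the bound \eqref{ThPSSi}. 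This multi-stage approximation on symbols, perturbations and generating operators is the technical but routine part of the argument.
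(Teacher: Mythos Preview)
Your outline is a direct re-derivation of the \cite{PSS} machinery (Hermite--Genocchi, band-limited symbols, UMD transference, then approximation) carried out from the start for distinct $A_1,\ldots,A_{n+1}$. The paper takes a much shorter and rather different route: it simply \emph{cites} \cite[Theorem~5.3 and Remark~5.4]{PSS} for the case $A_1=\cdots=A_{n+1}$, observes that the approximation lemmas \cite[Lemmas~3.3 and~5.5]{PSS} extend verbatim to distinct $A_j$ (thereby reducing to operators whose spectra are finite subsets of $\Z$), and then for such operators uses a $2\times 2$ matrix dilation trick. One replaces $A_j$ by $\Ac_j$ with spectral measure $E_{A_{k+1}}\otimes e_{11}+E_{A_j}\otimes e_{22}$ and places the $X_l$ and the trace pairing $X$ in appropriate corners $e_{11},e_{21},e_{22},e_{12}$ of $M_2(\C)$; a trace computation shows this does not change the multilinear form, does not increase any Schatten norm, and increases by one the number of equal operators among the $\Ac_j$. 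Iterating reduces the distinct-operator estimate to the equal-operator one already supplied by \cite{PSS}. This buys a clean, constant-preserving reduction in a few lines, at the cost of invoking \cite{PSS} as a black box.

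Two points in your sketch deserve tightening. First, the crucial step --- replacing $|\nu|(\R)$ by $\|f^{(n)}\|_\infty$ --- you yourself flag as the main obstacle and say must be ``imported from \cite{PSS}''; but \cite{PSS} proves that estimate only for equal $A_j$, so you still owe an argument that the UMD/transference analysis goes through unchanged for distinct operators. The paper's dilation trick is precisely the device that discharges this obligation without reopening the UMD proof. Second, your final approximation paragraph is loose on the unbounded case: truncating the $X_i$ to finite rank does \emph{not} confine the spectral parameters of unbounded $A_j$ entering $S_{f^{[n]},r}$ to a compact set; you also need spectral truncation of the $A_j$, which is exactly what the approximation lemmas of \cite{PSS} provide and what the paper invokes.
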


\begin{proof}
We note that the inequality \eqref{ThPSSii} follows from \eqref{ThPSSi} because $\norm{\,\cdotp}_{np}\le \norm{\,\cdotp}_{p}$ and focus on the proof of \eqref{ThPSSi}.

In the case $A_1=\cdots=A_n$, the inequality \eqref{ThPSSi}
is established in \cite[Theorem 5.3 and Remark 5.4]{PSS}.

Assume now that some of $A_1,\ldots,A_{n+1}$ are distinct.
A careful investigation of the proofs of
\cite[Lemmas 3.3 and 5.5]{PSS} shows that their results also hold
for distinct $A_1,\ldots,A_{n+1}$. This immediately implies that
to get \eqref{ThPSSi} in the general case it suffices to prove
\eqref{ThPSSi} for $A_1,\ldots,A_{n+1}$ whose spectra are finite subsets of $\Z$.


Assume now that the spectra of $A_1,\ldots,A_{n+1}$ are finite subsets of $\Z$. Then the transformation $T_{f^{[n]}}^{A_1,\ldots,A_{n+1}}(X_1,\ldots,X_n)$ is well defined and represented by a finite sum.
Let $1\leq k\leq n$ and assume that the
$(n-k+1)$ last self-adjoint operators $A_{k+1},A_{k+2},\ldots,
A_{n+1}$ are equal.
%
Let $e_{ij}\in M_2(\C)$ denote the elementary matrix whose nonzero entry has indices $(i,j)$.
Let $\Xc_l=X_l\otimes e_{22}$ for $l=1,\ldots, k-1$, $\Xc_{k}=X_k\otimes e_{21}$, and
$\Xc_l=X_l\otimes e_{11}$ for $l=k+1,\ldots,n$.
Then for any $l=1,\ldots,k$, let $\Ac_l$ be the self-adjoint operator with the spectral measure
$E_{A_{k+1}}\otimes e_{11}+E_{A_l}\otimes e_{22}$ and for any $l=k+1,\ldots,n+1$, let $\Ac_l=\Ac_{k}$.
Consider
$X\in\Sc^{p'}$, with $\frac1p+\frac{1}{p'}=1$, and let $\Xc=X\otimes e_{12}$.
A straightforward calculation (see, e.g., the proof of \cite[Theorem 3.3]{ASAIM})
implies that
$$
\tr\big(T_{f^{[n]}}^{A_1,\ldots,A_{n+1}}(X_1,\ldots,X_n)X\big)\,=\,
\tr\big(T_{f^{[n]}}^{\Ac_1,\ldots,\Ac_{n+1}}
(\Xc_1,\ldots,\Xc_n)\Xc\big).
$$
Note that by construction, the $(n-k+2)$ self-adjoint
operators $\Ac_{k},\Ac_{k+1},\ldots,
\Ac _{n+1}$ are equal.
Further $\norm{\Xc_l}_p=\norm{X_l}_p$ for any $l=1,\ldots,n$ and
$\norm{\Xc}_{p'}=\norm{X}_{p'}$.
Using this process inductively for $k=n, n-1,\ldots,1$, we obtain that to
prove \eqref{ThPSSi}, it suffices to have it when the
self-adjoint
operators are all equal. This concludes the proof.
\end{proof}

\begin{lemma}
\label{LAm}
Let $A=A^*$, $m\in\N$, $P_m=E_A((-m,m))$, $A_m=P_mA$, $K=K^*$ bounded,
$K_m=P_mKP_m$. Let $n\in\N, p>1$, $X_i\in\Sc^p$,
$X_{i,m}=P_mX_iP_m$, $i=1,\ldots,n$. Assume that $f\in C^n(\R), f^{(n)}\in C_b(\R)$. Then,
\begin{align}
\label{TmT}
T_{f^{[n]}}^{A_m+K_m,\ldots,A_m+K_m}(X_{1,m},\ldots,X_{n,m})
=T_{f^{[n]}}^{A+K_m,\ldots,A+K_m}(X_{1,m},\ldots,X_{n,m}).
\end{align}
\end{lemma}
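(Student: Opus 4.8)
The plan is to prove the identity at the level of the approximating sums $S_{f^{[n]},r}$ of Definition~\ref{MOIPSS}: I will show that, once the perturbations are compressed by $P_m$, each summand of the sum formed with $A_m+K_m$ equals the corresponding summand of the sum formed with $A+K_m$, and then pass to the limit $r\to\infty$.

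The first step is a structural observation about the two generating operators. Since $P_m=E_A\bigl((-m,m)\bigr)$ is a spectral projection of $A$, it commutes with $A$, hence with $A+K_m$ (note $P_mK_m=K_m=K_mP_m$), and trivially with $A_m+K_m=P_m(A+K)P_m$. Thus $P_m$ reduces both $A+K_m$ and $A_m+K_m$; on the range of $P_m$ both of them act as $\xi\mapsto P_m(A+K)\xi$, i.e.\ their compressions to $P_m\Hc$ coincide, and they differ only on $\ker P_m$, where $A_m+K_m$ is $0$ and $A+K_m$ equals $A$. Writing the functional calculus as a direct sum over $\Hc=P_m\Hc\oplus\ker P_m$ and multiplying by $P_m$ annihilates the $\ker P_m$ component, so
\begin{equation*}
g(A_m+K_m)\,P_m=g(A+K_m)\,P_m,\qquad P_m\,g(A_m+K_m)=P_m\,g(A+K_m)
\end{equation*}
for every bounded Borel function $g$ on $\R$, the second identity being the adjoint of the first. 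In particular the spectral projections of $A_m+K_m$ and of $A+K_m$ associated with the intervals $[l/r,(l+1)/r)$ agree after multiplication by $P_m$ on either side.

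Next I feed the compressed perturbations $X_{i,m}=P_mX_iP_m$ into Definition~\ref{MOIPSS}. In a generic summand of $S_{f^{[n]},r}(X_{1,m},\dots,X_{n,m})$ I use $X_{i,m}=P_mX_{i,m}P_m$ to insert a factor $P_m$ on each side of every spectral projection occurring in the summand, and then replace each sandwiched spectral projection of $A_m+K_m$ by the one of $A+K_m$ via the previous step; this shows the summand for $A_m+K_m$ coincides with the summand for $A+K_m$. Hence $S_{f^{[n]},r}(X_{1,m},\dots,X_{n,m})$ is the same whether it is formed with $A_m+K_m$ or with $A+K_m$, for every $r\in\N$. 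Since $f\in C^n(\R)$ and $f^{(n)}\in C_b(\R)$, Theorem~\ref{ThPSS} applies to both choices of (equal) generating operators, so each of the two multiple operator integrals in \eqref{TmT} exists and is, by Definition~\ref{MOIPSS}, the $\Sc^p$-limit as $r\to\infty$ of the corresponding sequence of sums; being term-by-term equal for every $r$, these two sequences have the same limit, which gives \eqref{TmT}.

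The only delicate point is verifying that $A+K_m$ is genuinely reduced by $P_m$ with the stated compression, which requires handling the possibly unbounded operator $A$: one checks on the domain of $A$ that $P_m(A+K_m)\xi=(A+K_m)P_m\xi$ and that both the range and the kernel of $P_m$ are invariant, all of which follows from $P_m$ being a spectral projection of $A$. Beyond this, the argument is bookkeeping with the sums defining the multiple operator integral (which, after sandwiching by $P_m$, are in fact finitely supported).
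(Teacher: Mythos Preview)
Your proof is correct and follows essentially the same route as the paper's: both arguments show that $P_m$ reduces each of $A+K_m$ and $A_m+K_m$, that the two operators coincide on $P_m\Hc$, and hence that their spectral projections agree after multiplication by $P_m$, so the approximating sums $S_{f^{[n]},r}(X_{1,m},\ldots,X_{n,m})$ of Definition~\ref{MOIPSS} match term by term. The paper phrases the key step via an explicit direct-sum decomposition of the spectral measure $E_{A_m+K_m}=P_mE_{A+K_m}P_m\big|_{\Hc_m}\oplus\delta_0\otimes I_{\Hc_m^\perp}$ (invoking \cite[Lemma~5.6.17]{KR} for the commutation $P_mE_{A+K_m}=E_{A+K_m}P_m$), whereas you obtain the equivalent identity $g(A_m+K_m)P_m=g(A+K_m)P_m$ for bounded Borel $g$ directly from the reduction; these are the same fact in different clothing.
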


\begin{proof}
Both multiple operator integrals in \eqref{TmT} are well defined
bounded multilinear transformations by Theorem \ref{ThPSS}.
Since $P_m(A+K_m)y=(A+K_m)P_my$ for every $y$ in the domain of $A$,
\begin{align*}
P_mE_{A+K_m}=E_{A+K_m}P_m=P_mE_{A+K_m}P_m
\end{align*}
by \cite[Lemma 5.6.17]{KR}.
Let $\Hc_m=P_m\Hc$, so $\Hc=\Hc_m\oplus\Hc_m^\perp$. By the spectral decomposition,
\begin{align*}
A_m+K_m=P_m(A+K_m)P_m=\int_\R\lambda\,d(P_m E_{A+K_m}(\lambda)P_m).
\end{align*}
We obtain the decomposition of the spectral measure of $A_m+K_m$,
\begin{align*}
E_{A_m+K_m}=P_mE_{A+K_m}P_m|_{\Hc_m}\oplus \delta_0\otimes I_{\Hc_m^\perp}.
\end{align*}
Hence,
\begin{align*}
P_m E_{A_m+K_m}=E_{A_m+K_m}P_m=P_m E_{A_m+K_m}P_m=P_m E_{A+K_m}P_m.
\end{align*}
Summarizing the observations made above we arrive at
\begin{align*}
&\lim_{r\rightarrow\infty}\sum_{l_1,l_2,\ldots,l_{n+1}\in\Z}f^{[n]}\bigg(\frac{l_1}{r},\ldots,\frac{l_{n+1}}{r}\bigg)
E_{A+K_m}\bigg(\bigg[\frac{l_1}{r},\frac{l_1+1}{r}\bigg)\bigg)P_mX_1P_m\ldots\\
&\quad\quad\ldots P_mX_nP_m E_{A+K_m}\bigg(\bigg[\frac{l_{n+1}}{r},\frac{l_{n+1}+1}{r}\bigg)\bigg)\\
&=\lim_{r\rightarrow\infty}\sum_{l_1,l_2,\ldots,l_{n+1}\in\Z}f^{[n]}\bigg(\frac{l_1}{r},\ldots,\frac{l_{n+1}}{r}\bigg)
E_{A_m+K_m}\bigg(\bigg[\frac{l_1}{r},\frac{l_1+1}{r}\bigg)\bigg)P_mX_1P_m\ldots\\
&\quad\quad\ldots P_mX_nP_m E_{A_m+K_m}\bigg(\bigg[\frac{l_{n+1}}{r},\frac{l_{n+1}+1}{r}\bigg)\bigg),
\end{align*}
which implies \eqref{TmT}.
\end{proof}

There is a different approach to multiple operator integrals $T_{f^{[n]}}^{A_1,\ldots,A_{n+1}}$ (see \cite{Peller2006}) that allows to estimate their $\Sc^1$-norm and operator norm, but it applies to a set of functions significantly smaller than the one in Theorem \ref{ThPSS}. By \cite[Lemma 3.5]{PSS}, the multiple operator integral constructed in \cite{Peller2006} coincides with the one presented in Definition \ref{MOIPSS} for $f$ in a certain Besov class, which definition is recalled below.

Let $w_0\in C^{\infty}(\mathbb{R})$ be such that its Fourier transform is supported in the set $[-2, -1/2] \cup [1/2, 2]$, $\hat{w_0}$ is an even function and $\hat{w_0}(y) + \hat{w_0}(y/2) = 1$ for $1 \le y \le 2$. Set
\begin{align}
\label{wk}
w_k(x) = 2^k w_0(2^kx)
\end{align}
for $x\in \mathbb{R}, k\in \mathbb{Z}$. Following \cite{Peller2006}, the Besov space is defined as the set
$$
B^n_{\infty 1}(\mathbb{R}) = \Big\lbrace f\in C^n(\mathbb{R}): \ \ \|f^{(n)}\|_{\infty} + \sum_{k\in \mathbb{Z}} 2^{nk} \|f \ast w_k \|_{\infty} < \infty \Big\rbrace,
$$
equipped with the seminorm
$$
\|f\|_{B^n_{\infty 1}} = \sum_{k\in \mathbb{Z}} 2^{nk} \|f \ast w_k \|_{\infty}.
$$

The following estimate for the $\Sc^q$-norm, $1\leq q <\infty$ of the transformation $T_{f^{[n]}}^{A_1,\ldots,A_{n+1}}$ is derived completely analogously to the estimate of $T_{f^{[n]}}^{A_1,\ldots,A_{n+1}}$ in the operator norm proved in \cite[Theorem 5.5]{Peller2006}.

\begin{theorem}
\label{Peller}
Let $n\in\N$ and $f\in B_{\infty1}^n(\mathbb{R})$. Let $1\leq q <\infty$.
Let $A_1,\ldots,A_{n+1}$ be self-adjoint operators and let $X_1,\ldots,X_n\in\Sc^q$.
Then, $T_{f^{[n]}}^{A_1,\ldots,A_{n+1}}\in\Bc_n(\Sc^{nq}\times\cdots\times\Sc^{nq},\Sc^q)$ and there exists
${\rm const}_n>0$ such that
\begin{align*}
\big\|T_{f^{[n]}}^{A_1,\ldots,A_{n+1}}(X_1,\ldots,X_n)\big\|_q
&\le {\rm const}_n\,\|f\|_{B_{\infty1}^n}\|X_1\|_{nq}\cdots\|X_n\|_{nq}\\
&\le {\rm const}_n\,\|f\|_{B_{\infty1}^n}\|X_1\|_q\cdots\|X_n\|_q.
\end{align*}
\end{theorem}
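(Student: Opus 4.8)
The plan is to follow the lines of the proof of \cite[Theorem 5.5]{Peller2006}, replacing the operator norm by the $\Sc^q$-norm throughout; the only norm-dependent ingredient is the generalized Hölder inequality for Schatten classes, which is available for every $1\le q<\infty$. First I would use the Littlewood--Paley decomposition attached to $f\in B_{\infty1}^n(\R)$, writing $f=\sum_{k\in\Z}f\ast w_k$ and hence $f^{[n]}=\sum_{k\in\Z}(f\ast w_k)^{[n]}$, and record that each $g_k:=f\ast w_k$ is a bounded function whose Fourier transform is supported in $\{2^{k-1}\le|y|\le 2^{k+1}\}$.

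The core estimate is obtained piece by piece. By Peller's representation of divided differences of band-limited functions (the mechanism behind \cite[Theorem 5.5]{Peller2006}; see also \cite{BS3,Peller1985}), a bounded function $g$ with $\widehat g$ supported in $[-\sigma,\sigma]$ has
\[
g^{[n]}(x_1,\dots,x_{n+1})=\sum_{\mu}\alpha_{\mu,1}(x_1)\cdots\alpha_{\mu,n+1}(x_{n+1}),
\qquad
\sum_{\mu}\,\prod_{j=1}^{n+1}\|\alpha_{\mu,j}\|_\infty\le{\rm const}_n\,\sigma^{n}\,\|g\|_\infty .
\]
For an elementary-tensor symbol the multiple operator integral of Definition \ref{MOIPSS} is the product $\alpha_{\mu,1}(A_1)X_1\alpha_{\mu,2}(A_2)X_2\cdots X_n\alpha_{\mu,n+1}(A_{n+1})$, so the Hölder inequality for Schatten norms (the $n$ indices $nq$ satisfy $n/(nq)=1/q$, and $\|\alpha_{\mu,j}(A_j)\|\le\|\alpha_{\mu,j}\|_\infty$) gives, after summing over $\mu$,
\[
\bignorm{T^{A_1,\dots,A_{n+1}}_{g_k^{[n]}}(X_1,\dots,X_n)}_q\le{\rm const}_n\,2^{nk}\|g_k\|_\infty\,\|X_1\|_{nq}\cdots\|X_n\|_{nq}
\]
for every $k\in\Z$.

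Finally, summing over $k\in\Z$ and using $\sum_{k}2^{nk}\|g_k\|_\infty=\|f\|_{B_{\infty1}^n}<\infty$, the series $\sum_k T^{A_1,\dots,A_{n+1}}_{g_k^{[n]}}$ is absolutely summable in $\Bc_n(\Sc^{nq}\times\cdots\times\Sc^{nq},\Sc^q)$; its sum is $T^{A_1,\dots,A_{n+1}}_{f^{[n]}}$ by linearity and by \cite[Lemma 3.5]{PSS}, which identifies the construction of Definition \ref{MOIPSS} with Peller's separation-of-variables construction on the relevant Besov class. This yields both the asserted membership in $\Bc_n(\Sc^{nq}\times\cdots\times\Sc^{nq},\Sc^q)$ and the first displayed inequality; the second follows from $\norm{\,\cdot\,}_{nq}\le\norm{\,\cdot\,}_q$ on Schatten classes, exactly as in the proof of Theorem \ref{ThPSS}. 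I expect the one point needing care to be the \emph{strong} $\Sc^q$-convergence of the series of elementary-tensor multiple operator integrals to the object of Definition \ref{MOIPSS} --- that is, the matching of the two multiple operator integral constructions and the transfer of the estimate between them --- for which \cite[Lemma 3.5]{PSS} together with the band-limited representation does the work, everything else being bookkeeping parallel to \cite[Theorem 5.5]{Peller2006}.
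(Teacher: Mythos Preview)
Your proposal is correct and matches the paper's approach exactly: the paper does not give a detailed proof but simply states that the estimate ``is derived completely analogously to the estimate \ldots\ in the operator norm proved in \cite[Theorem 5.5]{Peller2006},'' which is precisely what you outline, with the generalized H\"older inequality for Schatten norms replacing the operator-norm product estimate. Your reference to \cite[Lemma 3.5]{PSS} for identifying the two multiple operator integral constructions is also the mechanism the paper invokes just before the statement of the theorem.
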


We will also need properties of multiple operator integrals that
follow from the approach of \cite{CLS}.
Again let $A_j=A_j^*$, $j=1,\ldots,n+1$ and let
$\lambda_{A_j}$  be a scalar-valued spectral measure for $A_j$ (that is, a measure
on the Borel subsets of $\sigma(A_j)$ having the sames sets of measure zero as $E_{A_j}$).
Let $\Omega=\prod_{j=1}^{n+1}\sigma(A_j)$ be the product measurable space. Then the tensor product space
$$
L^\infty(\sigma(A_1),\lambda_1)\otimes\cdots\otimes L^\infty(\sigma(A_{n+1}),\lambda_{n+1})
\,\subset\,L^\infty(\Omega,\lambda_{A_1}\times\cdots\times\lambda_{A_{n+1}})
$$
is a $w^*$-dense subspace.

Further $\Bc_n(\Sc^2\times\cdots\times\Sc^2,\Sc^2)$ is a dual space, namely it naturally
identifies with the $(n+1)$-fold projective tensor product of $\Sc^2$ (see \cite[Section 3.1]{CLS}).

\begin{definition}\label{MOICLS}
Let $\Gamma$ be the unique linear map from the tensor product space
$\linebreak
L^\infty(\sigma(A_1),\lambda_1) \otimes\cdots\otimes
L^\infty(\sigma(A_{n+1}),\lambda_{n+1})$ into
$\Bc_n(\Sc^2\times\cdots\times\Sc^2,\Sc^2)$ such that
$$
\Gamma(f_1\otimes\cdots\otimes f_{n+1})(X_1,\ldots,X_n)
=f_1(A_1)X_1f_2(A_2)X_2\cdots f_n(A_n)X_nf_{n+1}(A_{n+1})
$$
for all $f_j\in L^\infty(\sigma(A_j),\lambda_j), j=1,\ldots,n$,
and all $X_1,\ldots, X_n\in \Sc^2$. According to \cite[Proposition 6]{CLS},
$\Gamma$ uniquely extends to a $w^*$-continuous and contractive map
$$
\Gamma^{A_1,\ldots,A_{n+1}}\colon L^\infty(\Omega,\lambda_{A_1}\times\cdots\times\lambda_{A_{n+1}})
\longrightarrow \Bc_n(\Sc^2\times\cdots\times\Sc^2,\Sc^2).
$$
Let $\varphi\colon\R^{n+1}\to\C$ be a bounded Borel function
and let $\widetilde{\varphi}\in
L^\infty(\Omega,\lambda_{A_1}\times\cdots\times\lambda_{A_{n+1}})$ be the class of
its restriction to $\Omega$.
Then the $n$-linear map $\Gamma^{A_1,\ldots,A_{n+1}}(\widetilde{\varphi})$
will be simply denoted by
$$
\Gamma^{A_1,\ldots,A_{n+1}}(\varphi)\colon \Sc^2\times\cdots\times\Sc^2\longrightarrow\Sc^2
$$
in the sequel.
\end{definition}

According to \cite[Remark 8]{CLS}, the above multiple operator
integrals $\Gamma^{A_1,\ldots,A_{n+1}}(\varphi)$ coincide with
Pavlov's ones \cite{Pav}. The crucial point in the
construction leading to Definition \ref{MOICLS} is the $w^*$-continuity
of $\Gamma^{A_1,\ldots,A_{n+1}}$, which allows to reduce various computations
to elementary tensor product manipulations. See \cite{CLMSS} for
illustrations.

\begin{proposition}\label{Coincide0}
\label{L}
Let $n\in\N$, let $A_1,\ldots,A_{n+1}$ be self-adjoint operators
and $X_1,\ldots,X_n\in\Sc^2$. If $f\in C^n(\R), f^{(n)}\in C_b(\R)$, then
\begin{equation}\label{Coincide}
\big[\Gamma^{A_1,\ldots,A_{n+1}}(f^{[n]})\big](X_1,\ldots,X_n)=
T_{f^{[n]}}^{A_1,\ldots,A_{n+1}}(X_1,\ldots,X_n).
\end{equation}
\end{proposition}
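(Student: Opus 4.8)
The plan is to prove the identity on a $w^*$-dense set of symbols and then pass to the limit, exploiting the fact that both sides are, in an appropriate sense, continuous in $\varphi$. First I would reduce to the case where $A_1,\ldots,A_{n+1}$ have bounded spectra: indeed, by Definition \ref{MOIPSS} the transformation $T_{f^{[n]}}^{A_1,\ldots,A_{n+1}}(X_1,\ldots,X_n)$ is the $\Sc^2$-limit (since $p=2$ here, $\Sc^{np}\supset\Sc^p$, but more importantly $2$-tensor estimates are available) of its truncations, and each truncation only sees the values of $f^{[n]}$ on the joint spectrum; likewise $\Gamma^{A_1,\ldots,A_{n+1}}(f^{[n]})$ only depends on $f^{[n]}$ restricted to $\Omega=\prod_j\sigma(A_j)$. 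So after spectral truncation both sides are unchanged if we replace $f$ by a $C^n$ function with bounded derivatives that agrees with $f$ on a large compact set, and we may assume $f\in C^n(\R)$ has $f^{(n)}\in C_b(\R)$ and, say, compact support issues handled by a further cutoff.

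Next I would prove the identity for symbols of the form $\varphi=f_1\otimes\cdots\otimes f_{n+1}$ with each $f_j\in L^\infty(\sigma(A_j),\lambda_j)$ continuous. On the $\Gamma$-side this is the defining formula $f_1(A_1)X_1f_2(A_2)\cdots f_n(A_n)X_nf_{n+1}(A_{n+1})$. On the $T$-side one computes $S_{\varphi,r}(X_1,\ldots,X_n)=\prod_j\big(\sum_{l_j}f_j(l_j/r)E_{l_j,r}^j\big)$ interleaved with the $X_i$, and as $r\to\infty$ each factor $\sum_{l_j}f_j(l_j/r)E_{l_j,r}^j$ converges (strongly, hence in $\Sc^2$ against the fixed $X_i\in\Sc^2$ sandwiched between) to $f_j(A_j)$ by continuity of $f_j$ and the spectral theorem; multilinearity of the product in $\Sc^2$ then gives convergence to $f_1(A_1)X_1\cdots f_{n+1}(A_{n+1})$. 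Hence \eqref{Coincide} holds for elementary tensors, and by linearity for the whole tensor product space $L^\infty(\sigma(A_1),\lambda_1)\otimes\cdots\otimes L^\infty(\sigma(A_{n+1}),\lambda_{n+1})$.

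To upgrade from this dense subspace to $\varphi=f^{[n]}$ I would use a $w^*$-approximation. By Theorem \ref{ThPSS} (with $p=2$), the map $\varphi\mapsto T_\varphi^{A_1,\ldots,A_{n+1}}$ is well defined and bounded on symbols $f^{[n]}$ with uniform control by $\|f^{(n)}\|_\infty$; and $\Gamma^{A_1,\ldots,A_{n+1}}$ is $w^*$-continuous and contractive by Definition \ref{MOICLS}. So it suffices to produce a net $\varphi_\alpha$ in the tensor product subspace with $\varphi_\alpha\to\widetilde{f^{[n]}}$ in $w^*(L^\infty(\Omega))$ such that $T_{\varphi_\alpha}^{A_1,\ldots,A_{n+1}}(X_1,\ldots,X_n)\to T_{f^{[n]}}^{A_1,\ldots,A_{n+1}}(X_1,\ldots,X_n)$ in $\Sc^2$. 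For the bounded-spectrum case one can take the $\varphi_\alpha$ to be Fej\'er-type or Bernstein-polynomial approximations of $f^{[n]}$ on a box containing $\prod_j\sigma(A_j)$, which converge uniformly there (since $f^{[n]}$ is continuous), hence $w^*$, and for which the $T$-side convergence follows from the uniform convergence together with the norm bound of Definition \ref{MOIPSS} applied to the difference symbol $\varphi_\alpha-f^{[n]}$. I expect the main obstacle to be exactly this last step: one must verify that the truncation/approximation scheme interchanges the two limits ($r\to\infty$ defining $T$, and $\alpha$ defining the symbol approximation) uniformly, which is where having already reduced to bounded spectra and to continuous symbols pays off, since there uniform convergence of symbols plus the crude norm estimate suffice and no delicate UMD-type machinery is needed.
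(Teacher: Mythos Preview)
Your outline is workable but takes an unnecessary detour, and the paper's argument is much shorter because it exploits a structural coincidence you overlook. The step-function approximants
\[
f^{[n]}_{r,N}=\sum_{\substack{|l_j|\le N\\ 1\le j\le n+1}} f^{[n]}\Bigl(\frac{l_1}{r},\ldots,\frac{l_{n+1}}{r}\Bigr)\,\chi_{[l_1/r,(l_1+1)/r)}\otimes\cdots\otimes\chi_{[l_{n+1}/r,(l_{n+1}+1)/r)}
\]
are already elementary tensors in $L^\infty(\sigma(A_1),\lambda_1)\otimes\cdots\otimes L^\infty(\sigma(A_{n+1}),\lambda_{n+1})$, and $\Gamma^{A_1,\ldots,A_{n+1}}(f^{[n]}_{r,N})(X_1,\ldots,X_n)$ is \emph{precisely} the truncated partial sum appearing in Definition~\ref{MOIPSS}. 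Since $f^{[n]}$ is continuous, $f^{[n]}_{r,N}\to f^{[n]}$ in the $w^*$-topology of $L^\infty(\Omega)$ as $N\to\infty$ and then $r\to\infty$, so $w^*$-continuity of $\Gamma$ handles the $\Gamma$-side, while convergence on the $T$-side is the \emph{definition} of $T_{f^{[n]}}^{A_1,\ldots,A_{n+1}}$. No reduction to bounded spectra, no Fej\'er or Bernstein approximations, and no auxiliary norm bound on $T$ are needed.

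Your version also has a gap at the last step. You write that ``$T$-side convergence follows from the uniform convergence together with the norm bound of Definition~\ref{MOIPSS} applied to the difference symbol $\varphi_\alpha-f^{[n]}$.'' But Definition~\ref{MOIPSS} contains no norm bound, and Theorem~\ref{ThPSS} only bounds $T_{g^{[n]}}$ by $\|g^{(n)}\|_\infty$ for divided-difference symbols; neither gives $\|T_\psi\|\lesssim\|\psi\|_\infty$ for a general bounded continuous $\psi$ such as $\varphi_\alpha-f^{[n]}$. That estimate \emph{is} true when $p=2$ (it is exactly what the remark following the proposition states and leaves as an exercise, using orthogonality of the spectral projections), so your route can be salvaged, but as written it invokes a bound you have not established. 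Choosing the approximants $f^{[n]}_{r,N}$ as the paper does sidesteps this issue entirely, since then the $T$-convergence is tautological.
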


\begin{proof}
By Theorem \ref{ThPSS}\eqref{ThPSSii},
$T_{f^{[n]}}^{A_1,\ldots,A_{n+1}}\colon
\Sc^2\times\cdots\times\Sc^2\to\Sc^2$ is well defined.

For any $r\in\N, l\in\Z$, set $J_{l,r}=\bigl[\frac{l}{r},\frac{l+1}{r}\bigr)$
and for $N\in\N$, consider
$$
f^{[n]}_{r,N} = \sum_{\substack{|l_j|\le N\\ 1\le j\le n+1}}
f^{[n]}\left( \frac {l_1}r, \ldots, \frac
{l_{n+1}}r \right) \chi_{J_{l_1,r}}\otimes\cdots\otimes \chi_{J_{l_{n+1},r}}.
$$
Since $f^{[n]}\colon\R^{n+1}\to\C$ is continuous,
$$
f^{[n]} = w^*\hbox{-}\lim_{r\to\infty}\lim_{N\to\infty} f^{[n]}_{r,N}
$$
in $L^\infty(\Omega,\lambda_{A_1}\times\cdots\times\lambda_{A_{n+1}})$.
Hence for any $X_1,\ldots,X_n\in\Sc^2$,
$$
\big[\Gamma^{A_1,\ldots,A_{n+1}}(f^{[n]})\big](X_1,\ldots,X_n)
=\lim_{r\to\infty}\lim_{N\to\infty}
\big[\Gamma^{A_1,\ldots,A_{n+1}}(f^{[n]}_{r,N})\big](X_1,\ldots,X_n)
$$
in $\Sc^2$. Comparing with Definition \ref{MOIPSS}, we deduce (\ref{Coincide}).
\end{proof}

\begin{remark}
\

\smallskip
(i) In the case when $p=2$, Theorem \ref{ThPSS}\eqref{ThPSSii}
has a simple proof and the constant appearing
in (\ref{ThPSSii}) is $c_{n,n}=1$. More generally it can be shown that
for any bounded continuous function $\varphi\colon\R^{n+1}\to\C$,
$T_{\varphi}^{A_1,\ldots,A_{n+1}}\colon
\Sc^2\times\cdots\times\Sc^2\to\Sc^2$ is well defined. Then the above proof
implies that $\big[\Gamma^{A_1,\ldots,A_{n+1}}(\varphi)\big](X_1,\ldots,X_n)=
T_{\varphi}^{A_1,\ldots,A_{n+1}}(X_1,\ldots,X_n)$
for any $X_1,\ldots,X_n\in \Sc^2$. These facts, which will not be used in this
paper, are left as an exercise for the reader.

\smallskip
(ii) Let $A_1,\ldots,A_{n+1}$ and $f$ be as in Proposition \ref{Coincide0} and
let $1<p<\infty$. Then the mapping $\Gamma^{A_1,\ldots,A_{n+1}}(f^{[n]})$
extends to a bounded $n$-linear map from $\Sc^{np}\times\cdots\times
\Sc^{np}$ into $\Sc^{p}$, and (\ref{Coincide}) holds true
for any $X_1,\ldots, X_n$ in $\Sc^{np}$. This follows from Theorem
\ref{ThPSS}\eqref{ThPSSi}, Proposition \ref{Coincide0} and the density
of $\Sc^2\cap \Sc^{np}$ in $\Sc^{np}$.

\end{remark}

\section{Differentiability in $\Sc^p$, $1<p<\infty$}
\label{s3}

In this section we prove our main results on differentiability of functions
of operators in $\Sc^p$-norms, $1<p<\infty$.

We start by defining G\^{a}teaux and Fr\'{e}chet differentiability of operator functions.
The first order G\^{a}teaux and Fr\'{e}chet differentiability as well as higher order
Fr\'{e}chet differentiability are standard concepts (see, e.g., \cite[Chapter I, Sections B and F]{Schwartz}).
In this paper we understand higher order G\^{a}teaux differentiability in the sense described below.

Let $A=A^*$, $f\in \text{Lip}(\R)$, $1<p<\infty$. By \cite{BS3,PS}, the function
\begin{align}\label{PS-Function}
\varphi_{A,X,f,p} \colon \R\longrightarrow \Sc^p,\quad
\varphi_{A,X,f,p}(t)=f(A+tX)-f(A),
\end{align}
is well defined for every $X\in\Sc_{sa}^p$.


\begin{definition} Let $n\in\N$.
A function $f\in \text{Lip}(\R)$ is said to be $n$
times G\^{a}teaux $\Sc^p$-differentiable at $A=A^*$ if
\begin{itemize}
\item [(i)]
The function $\varphi_{A,X,f,p}$ defined by (\ref{PS-Function}) is $n$ times differentiable at $0$.
\item [(ii)]
$X\mapsto\varphi_{A,X,f,p}^{(k)}(0):\Sc_{sa}^p\mapsto\Sc^p$ is a bounded homogeneous transformation
of order $k$ for any $k=1,\ldots,n$.
\end{itemize}

The transformation $X\mapsto\varphi_{A,X,f,p}^{(n)}(0)$ is called the
$n$th G\^{a}teaux derivative of $f$ at $A$ and denoted $D_{G,p}^n f(A)$.
\end{definition}

Let $A=A^*$. By a $\Sc_{sa}^p$-neighborhood of $A$, we mean the set
$A+\mathcal V=\{A+X\, :\, X\in \mathcal V\}$, where
$\mathcal V\subset \Sc_{sa}^p$ is a neighborhood of $0$. Elements of
$A+\mathcal V$ are possibly unbounded
self-adjoint operators.

\begin{definition}\label{Sp-Diff}
Let $n\in\N$.
A function $f\in \text{Lip}(\R)$ is said to be $n$
times Fr\'{e}chet $\Sc^p$-differentiable at $A=A^*$ if it is $n-1$
times Fr\'{e}chet $\Sc^p$-differentiable in a
$\Sc_{sa}^p$-neighborhood of $A$ and there is a $n$-linear bounded operator
\begin{align*}
D_p^n f(A)\in\Bc_n(\Sc^p\times\cdots\times\Sc^p,\Sc^p)
\end{align*}
satisfying
\begin{align}
\nonumber
&\big\|\big(D_p^{n-1}f(A+X)-D_p^{n-1}f(A)\big)(X_1,\ldots,X_{n-1})-D_p^n f(A)(X_1,\ldots,X_{n-1},X)\big\|_p\\
&=o(\|X\|_p)\|X_1\|_p\cdots\|X_{n-1}\|_p
\end{align}
as $\|X\|_p\rightarrow 0$, $X\in\Sc^{p}_{sa}$, for all $X_1,\ldots, X_{n-1}\in\Sc^p$.


We further say that $f$ is $n$ times continuously Fr\'echet $\Sc^p$-differentiable
at $A=A^*$ if it is $n$ times Fr\'echet $\Sc^p$-differentiable in a $\Sc^p_{sa}$-neighborhood
of $A$ and for small $X\in \Sc^p_{sa}$,
\begin{equation}\label{4n+1}
\bignorm{\bigl(D_p^{n}f(A+X) -D_p^nf(A)\bigr)(X_1,\ldots,X_n)}_p =o(1)\|X_1\|_p\cdots\|X_{n}\|_p
\end{equation}
as $\|X\|_p\rightarrow 0$,
for all $X_1,\ldots, X_{n}\in\Sc^p$.
\end{definition}

For $A=A^*$ and $f\in \text{Lip}(\R)$, define
\begin{align}\label{PS-Function2}
\psi_{A,f,p} \colon \Sc^{p}_{sa}\longrightarrow \Sc^p,\quad
\psi_{A,f,p}(X)=f(A+X)-f(A).
\end{align}
A thorough look at Definition \ref{Sp-Diff} shows that
$f$ is $n$ times Fr\'{e}chet $\Sc^p$-differentiable at $A=A^*$ (resp.
$n$ times continuously Fr\'echet $\Sc^p$-differentiable
at $A=A^*$) if and only if $\psi_{A,f,p}$ is $n$ times
Fr\'echet differentiable at $0$ (resp.  $n$ times continuously
Fr\'echet differentiable at $0$) in the usual sense of differential calculus.

Since $\varphi_{A,X,f,p}(t)=\psi_{A,f,p}(tX)$, it follows from standard  functional analysis that
if $f$ is $n$ times Fr\'{e}chet $\Sc^p$-differentiable at $A=A^*$, then it is also
$n$ times G\^{a}teaux $\Sc^p$-differentiable at $A$ and
\begin{align*}
D_{G,p}^n f(A)(X)=D_p^n f(A)(X,\ldots,X),\quad X\in\Sc^{p}_{sa}.
\end{align*}

If $A$ is bounded, then all the above definitions make sense if f is a locally Lipschitz function. Indeed, the definition of an operator $f(A+X)-f(A)$ only depends on the restrictions of $f$ to the spectra of $A$ and $A+X$, which are compact subsets of $\C$.

Our main results are stated below.

\begin{theorem}\label{C(n+1)}
\label{Thmiin+1}
Let $1<p<\infty$, $n\in\N$, and $f\in C^{n+1}(\R)$ satisfy
$f',\ldots,f^{(n+1)}\in C_b(\R)$. Then $f$ is $n$ times continuously Fr\'{e}chet $\Sc^p$-differentiable at every $A=A^*$ and
\begin{align}
\label{*2n+1}
D_p^k f(A)(X_1,\ldots,X_k)=\sum_{\sigma\in \sym_k}
T_{f^{[k]}}^{A,\ldots,A}(X_{\sigma(1)},\ldots,X_{\sigma(k)})
\end{align}
for every $k=1,\ldots,n$, and all $X_1,\ldots,X_k\in\Sc^p$.
\end{theorem}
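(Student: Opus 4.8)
The plan is to invoke the observation following Definition~\ref{Sp-Diff} to reduce the statement to showing that the map $\psi_{A,f,p}\colon X\mapsto f(A+X)-f(A)$ --- which is well defined on $\Sc^p_{sa}$ because $f'\in C_b(\R)$ forces $f\in\mathrm{Lip}(\R)$ --- is $n$ times continuously Fr\'echet differentiable at $0$ in the ordinary sense, with $k$-th Fr\'echet derivative at a point $X$ equal, for $1\le k\le n$, to the \emph{symmetric} $k$-linear map
$$
\Phi_k(A+X)(Y_1,\ldots,Y_k):=\sum_{\sigma\in\sym_k}T_{f^{[k]}}^{A+X,\ldots,A+X}(Y_{\sigma(1)},\ldots,Y_{\sigma(k)}),
$$
which lies in $\Bc_k(\Sc^p\times\cdots\times\Sc^p,\Sc^p)$ by Theorem~\ref{ThPSS}; translating back then gives the theorem together with formula~\eqref{*2n+1}. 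Setting $\Phi_0(B):=f(B)-f(A)$ (so that $\Phi_0(B+Y)-\Phi_0(B)=f(B+Y)-f(B)$), the whole argument reduces to two estimates, valid for every self-adjoint $B$, every $Y\in\Sc^p_{sa}$ and all $Y_1,\ldots,Y_k\in\Sc^p$:
\begin{align*}
&\bignorm{\bigl(\Phi_k(B+Y)-\Phi_k(B)\bigr)(Y_1,\ldots,Y_k)}_p\le c_k\,\norm{f^{(k+1)}}_\infty\,\norm{Y}_p\prod_{i=1}^k\norm{Y_i}_p,\tag{A}\\
&\bignorm{\bigl(\Phi_k(B+Y)-\Phi_k(B)\bigr)(Y_1,\ldots,Y_k)-\Phi_{k+1}(B)(Y_1,\ldots,Y_k,Y)}_p\\
&\hspace{3cm}\le c_k'\,\norm{f^{(k+2)}}_\infty\,\norm{Y}_p^2\prod_{i=1}^k\norm{Y_i}_p,\tag{B}
\end{align*}
the first for $0\le k\le n$, the second for $0\le k\le n-1$. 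Indeed (B) says exactly that $X\mapsto\Phi_k(A+X)$ is Fr\'echet differentiable with derivative $X\mapsto\Phi_{k+1}(A+X)$ --- the identification of $\Bc_1\bigl(\Sc^p_{sa},\Bc_k(\Sc^p\times\cdots\times\Sc^p,\Sc^p)\bigr)$ with a subspace of $\Bc_{k+1}(\Sc^p\times\cdots\times\Sc^p,\Sc^p)$ being legitimate because each $\Phi_j(B)$ is symmetric --- while (A) with $k=n$ gives that $X\mapsto\Phi_n(A+X)$ is Lipschitz, hence continuous; since $\Phi_k(A+\cdot)$ is defined on all of $\Sc^p_{sa}$, a short induction on $k$ then shows that $\psi_{A,f,p}=\Phi_0(A+\cdot)$ is $n$ times continuously Fr\'echet differentiable at $0$ with $k$-th derivative $\Phi_k(A+\cdot)$.

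The main tool will be the divided difference (perturbation) identity for multiple operator integrals: for self-adjoint $B,B'$ with $B'-B\in\Sc^p$, self-adjoint $A_2,\ldots,A_{m+1}$, and $f\in C^{m+1}(\R)$ with $f^{(m)},f^{(m+1)}\in C_b(\R)$,
\begin{align*}
&T_{f^{[m]}}^{B',A_2,\ldots,A_{m+1}}(Z_1,\ldots,Z_m)-T_{f^{[m]}}^{B,A_2,\ldots,A_{m+1}}(Z_1,\ldots,Z_m)\\
&\qquad=T_{f^{[m+1]}}^{B',B,A_2,\ldots,A_{m+1}}(B'-B,Z_1,\ldots,Z_m),
\end{align*}
with the evident analogue when an entry other than the first is changed; this is the operator counterpart of $f^{[m]}(b',x_2,\ldots)-f^{[m]}(b,x_2,\ldots)=(b'-b)f^{[m+1]}(b',b,x_2,\ldots)$ (immediate from the recursion and symmetry of divided differences), and for $m=0$ it is the Birman--Solomyak formula $f(B')-f(B)=T_{f^{[1]}}^{B',B}(B'-B)$. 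Every integral occurring is bounded by Theorem~\ref{ThPSS}. I would prove this identity first on $\Sc^2$: there, by Proposition~\ref{Coincide0}, the integrals coincide with those of Definition~\ref{MOICLS}, and the $w^*$-continuity of $\Gamma^{B',B,A_2,\ldots,A_{m+1}}$ reduces it to the classical scalar Birman--Solomyak formula on elementary tensors (the relevant functions being bounded because $(x_1-x_2)f^{[m+1]}$ is a difference of two copies of $f^{[m]}$); it then extends to general $Z_i\in\Sc^p$ by the bound~\eqref{ThPSSii} and a density argument. Carrying this through carefully for mutually distinct, possibly unbounded operators is, I expect, the one genuine obstacle; the rest is bookkeeping.

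Granting the identity, (A) and (B) follow by telescoping. Fix $k$, write $B'=B+Y$ and $\vec{Y}_\sigma=(Y_{\sigma(1)},\ldots,Y_{\sigma(k)})$, and change the $k+1$ equal spectral measures to $B'$ one at a time; applying the identity with $m=k$ at each step gives
\begin{align*}
&T_{f^{[k]}}^{B',\ldots,B'}(\vec{Y}_\sigma)-T_{f^{[k]}}^{B,\ldots,B}(\vec{Y}_\sigma)\\
&\qquad=\sum_{j=1}^{k+1}T_{f^{[k+1]}}^{B',\ldots,B',B,\ldots,B}\bigl(Y_{\sigma(1)},\ldots,Y_{\sigma(j-1)},Y,Y_{\sigma(j)},\ldots,Y_{\sigma(k)}\bigr),
\end{align*}
the $j$-th summand having $j$ copies of $B'$ followed by $k+2-j$ copies of $B$. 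Summing over $\sigma\in\sym_k$ and bounding each of the $k!(k+1)$ summands by Theorem~\ref{ThPSS} (order $k+1$, via~\eqref{ThPSSii}) gives (A). For (B), I would then replace every $B'$ by $B$ in the displayed sum: by the identity once more (now with $m=k+1$), each replacement of one $B'$ alters a summand by an $f^{[k+2]}$-integral carrying the extra factor $B'-B=Y$, of $\Sc^p$-norm at most $c\,\norm{f^{(k+2)}}_\infty\norm{Y}_p^2\prod_{i=1}^k\norm{Y_i}_p$ by Theorem~\ref{ThPSS} (order $k+2$, via~\eqref{ThPSSii}); finitely many such replacements contribute a remainder of order $\norm{Y}_p^2\prod_{i=1}^k\norm{Y_i}_p$. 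After all the replacements the surviving main term is
\begin{align*}
&\sum_{\sigma\in\sym_k}\sum_{j=1}^{k+1}T_{f^{[k+1]}}^{B,\ldots,B}\bigl(Y_{\sigma(1)},\ldots,Y_{\sigma(j-1)},Y,Y_{\sigma(j)},\ldots,Y_{\sigma(k)}\bigr)\\
&\qquad\qquad=\Phi_{k+1}(B)(Y_1,\ldots,Y_k,Y),
\end{align*}
because $(\sigma,j)\mapsto$ (the $(k+1)$-tuple obtained by inserting $Y$ into slot $j$ of $\vec{Y}_\sigma$) is a bijection from $\sym_k\times\{1,\ldots,k+1\}$ onto the $(k+1)!$ orderings of $Y_1,\ldots,Y_k,Y$. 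This establishes (B), and with it the theorem. The hypotheses enter precisely at: $f\in C^{n+1}(\R)$, so that $f^{[n+1]}$ is defined; $f^{(m)}\in C_b(\R)$ for $m\le n+1$, giving boundedness of all the integrals above; and again $f^{(k+2)}\in C_b(\R)$ for $0\le k\le n-1$ (i.e.\ $m\le n+1$), controlling the remainders in (B).
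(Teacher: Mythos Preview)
Your proposal is correct and follows essentially the same route as the paper: the perturbation identity you invoke is exactly Lemma~\ref{L1}, your telescoping is Lemma~\ref{LMOId2}, and your estimates (A) and (B) package precisely the paper's inductive step and continuity argument (equations \eqref{mpii0}--\eqref{mpii-6} and \eqref{forp18}), with Theorem~\ref{ThPSS} supplying the bounds throughout. The paper's proof of Lemma~\ref{L1} confirms your instinct that extending the identity from $\Sc^2$ to general $\Sc^p$ is the real work --- for $p>2$ it requires approximating $B'-B\in\Sc^p$ by $\Sc^2$ elements and invoking a resolvent-strong-convergence result (\cite[Proposition 3.1]{CLMSS}) for the superscript operators, not merely density in the $Z_i$.
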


\begin{theorem}\label{C0}
Let $1<p<\infty$, $n\in\N$. Let $f\in C^n(\R)$ satisfy $f',\ldots,f^{(n-1)}\in
C_b(\R)$ and $f^{(n)}\in C_0(\R)$. Then $f$ is $n$ times continuously Fr\'{e}chet
$\Sc^p$-differentiable at every $A=A^*$ and
\begin{align}
\label{*2}
D_p^k f(A)(X_1,\ldots,X_k)
=\sum_{\sigma\in \sym_k}T_{f^{[k]}}^{A,\ldots,A}(X_{\sigma(1)},\ldots,X_{\sigma(k)}),
\end{align}
for every $k=1,\ldots,n$, and all $X_1,\ldots,X_k\in\Sc^{p}$.
\end{theorem}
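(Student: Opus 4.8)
The strategy is to deduce Theorem~\ref{C0} from Theorem~\ref{C(n+1)} by a regularization argument, using the uniform (in the base point) bound of Theorem~\ref{ThPSS} to control the approximation. Fix $A=A^*$. Recalling the equivalence noted after \eqref{PS-Function2}, it suffices to prove that $\psi_{A,f,p}$ is $n$ times continuously Fréchet differentiable at $0$ and then read off \eqref{*2}. Pick $\phi\in C_c^\infty(\R)$, $\phi\ge0$, $\int\phi=1$, $\mathrm{supp}\,\phi\subset[-1,1]$, put $\phi_\varepsilon(x)=\varepsilon^{-1}\phi(x/\varepsilon)$ and $f_\varepsilon=f*\phi_\varepsilon$. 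For each fixed $\varepsilon>0$, $f_\varepsilon\in C^\infty(\R)$ has \emph{all} derivatives bounded: $f_\varepsilon^{(k)}=f^{(k)}*\phi_\varepsilon\in C_b(\R)$ for $k\le n$, and $f_\varepsilon^{(n+j)}=f^{(n)}*\phi_\varepsilon^{(j)}$ with $\|f^{(n)}*\phi_\varepsilon^{(j)}\|_\infty\le\|f^{(n)}\|_\infty\|\phi_\varepsilon^{(j)}\|_1<\infty$ for $j\ge1$. In particular $f_\varepsilon\in C^{n+1}(\R)$ with $f_\varepsilon',\ldots,f_\varepsilon^{(n+1)}\in C_b(\R)$, so Theorem~\ref{C(n+1)} applies to $f_\varepsilon$ at every self-adjoint operator and gives $D_p^kf_\varepsilon(B)(X_1,\ldots,X_k)=\sum_{\sigma\in\sym_k}T_{f_\varepsilon^{[k]}}^{B,\ldots,B}(X_{\sigma(1)},\ldots,X_{\sigma(k)})$ for $k=1,\ldots,n$. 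Moreover $\|f_\varepsilon^{(k)}-f^{(k)}\|_\infty\to0$ as $\varepsilon\to0^+$ for every $k=1,\ldots,n$: for $k\le n-1$ because $f^{(k+1)}\in C_b(\R)$ makes $f^{(k)}$ Lipschitz, and for $k=n$ because $f^{(n)}\in C_0(\R)$ is uniformly continuous. (This is the point at which the hypothesis $f^{(n)}\in C_0(\R)$ is used decisively.)

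Two convergences then drive the argument. First, fix $k\in\{1,\ldots,n\}$. Using linearity of the divided difference and of the multiple operator integral in the symbol together with Theorem~\ref{ThPSS} (applicable since $(f_\varepsilon-f)^{(k)}\in C_b(\R)$, and $f^{(k)}\in C_b(\R)$), for every self-adjoint $B$ and $X_1,\ldots,X_k\in\Sc^p$,
\[
\bignorm{D_p^kf_\varepsilon(B)(X_1,\ldots,X_k)-\sum_{\sigma\in\sym_k}T_{f^{[k]}}^{B,\ldots,B}(X_{\sigma(1)},\ldots,X_{\sigma(k)})}_p\le k!\,c_{p,k}\,\bignorm{f_\varepsilon^{(k)}-f^{(k)}}_\infty\norm{X_1}_p\cdots\norm{X_k}_p ,
\]
so $D_p^kf_\varepsilon(A+\cdot)$ converges in $\Bc_k(\Sc^p\times\cdots\times\Sc^p,\Sc^p)$, uniformly on $\Sc^p_{sa}$, to the bounded map $G_k$ with $G_k(B)(X_1,\ldots,X_k)=\sum_{\sigma\in\sym_k}T_{f^{[k]}}^{B,\ldots,B}(X_{\sigma(1)},\ldots,X_{\sigma(k)})$. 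Second, since $f-f_\varepsilon$ is Lipschitz with $\|f'-f_\varepsilon'\|_\infty\to0$, the first-order Lipschitz perturbation estimate in $\Sc^p$ from \cite{BS3,PS} gives $\norm{\psi_{A,f_\varepsilon,p}(X)-\psi_{A,f,p}(X)}_p\le c_p\norm{f'-f_\varepsilon'}_\infty\norm{X}_p\to0$, i.e. $\psi_{A,f_\varepsilon,p}\to\psi_{A,f,p}$ pointwise on $\Sc^p_{sa}$.

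Now apply the classical uniform-convergence-of-derivatives theorem in Banach spaces, iterated $n$ times over the convex space $\Sc^p_{sa}$: each $\psi_{A,f_\varepsilon,p}$ is $n$ times continuously Fréchet differentiable with $j$th differential $D_p^jf_\varepsilon(A+\cdot)$, these converge uniformly to $G_j(A+\cdot)$ for each $j=1,\ldots,n$, and $\psi_{A,f_\varepsilon,p}\to\psi_{A,f,p}$ pointwise; hence $\psi_{A,f,p}$ is $n$ times continuously Fréchet differentiable on $\Sc^p_{sa}$ with $j$th differential $G_j(A+\cdot)$. By the equivalence recalled above, $f$ is $n$ times continuously Fréchet $\Sc^p$-differentiable at every self-adjoint operator of the form $A+X$, $X\in\Sc^p_{sa}$, and $D_p^kf(A)=G_k(A)$, which is exactly \eqref{*2}.

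The step requiring the most care is the multi-level application of the uniform-convergence-of-derivatives theorem: at each order $j=1,\ldots,n$ one must check that the maps $D_p^{j-1}f_\varepsilon(A+\cdot)$, valued in $\Bc_{j-1}(\Sc^p\times\cdots\times\Sc^p,\Sc^p)$, are Fréchet-$C^1$ with derivatives $D_p^jf_\varepsilon(A+\cdot)$ converging uniformly, and that the (at least pointwise) convergence of $\psi_{A,f_\varepsilon,p}$ itself is available to identify the limit. The fact that all $\Sc^p$-estimates are uniform in the base self-adjoint operator — which is precisely the content of Theorem~\ref{ThPSS} — is what lets the conclusion hold on a full $\Sc^p_{sa}$-neighborhood, as Definition~\ref{Sp-Diff} requires; the remaining ingredients are routine mollification estimates and the already established Theorem~\ref{C(n+1)}.
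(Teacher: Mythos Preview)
Your proof is correct and takes a genuinely different route from the paper's. The paper first invokes Theorem~\ref{Thmiin+1} (applied at order $n-1$) to get $(n-1)$-fold continuous Fr\'echet differentiability, and then handles the $n$th increment directly: it expands $D_p^{n-1}f(A+X_n)-D_p^{n-1}f(A)$ via the telescoping identity of Lemma~\ref{LMOId2} and then controls each summand by the key continuity Lemma~\ref{L2}, which shows that $T_{f^{[n]}}^{\ldots,A+X,\ldots}\to T_{f^{[n]}}^{\ldots,A,\ldots}$ in operator norm as $\norm{X}_{(n+1)p}\to 0$ whenever $f^{(n)}\in C_0(\R)$ (proved by approximating $f^{(n)}$ uniformly by $C_c^1$ functions). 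Your argument instead mollifies $f$ globally, applies Theorem~\ref{Thmiin+1} to each $f_\varepsilon$ to get $n$-fold continuous differentiability outright, and then passes to the limit via the classical Banach-space theorem on uniform convergence of derivatives; the $C_0$ hypothesis enters only to guarantee $\norm{f_\varepsilon^{(n)}-f^{(n)}}_\infty\to 0$, and Theorem~\ref{ThPSS} supplies the base-point-uniform bounds needed for that limit theorem. Your route is shorter and more conceptual, sidestepping Lemmas~\ref{LMOId2} and~\ref{L2} entirely. The paper's hands-on approach, however, yields the sharper quantitative statements recorded in Remark~\ref{Sharp}: the $o(\norm{X_k})$ errors are controlled by $\norm{X_1}_{kp}\cdots\norm{X_k}_{kp}$ with smallness measured in $\norm{\cdotp}_{(k+1)p}$, whereas your abstract limit argument naturally gives only the $\norm{\cdotp}_p$ versions that Theorem~\ref{C0} itself requires.
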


\begin{remark}\label{Sharp}
Under the assumptions of either Theorem \ref{C(n+1)} or Theorem \ref{C0}, we will actually prove the following stronger results.
\begin{itemize}
\item [(i)]
For every $k=1,\ldots,n$, and given $\varepsilon>0$,
there exists $\delta>0$ such that for all
$X_1,\ldots,X_{k-1}\in\Sc^p$ and for every $X_k\in \Sc^{p}_{sa}$
with $\norm{X_k}_{(k+1)p}\le\delta$,
\begin{align*}
\bignorm{\bigl(D^{k-1}_pf(A+X_k) &- D^{k-1}_pf(A)\bigr)(X_1,\ldots,X_{k-1})\\
-\sum_{\sigma\in{\rm Sym}_k}& T^{A,\ldots,A}_{f^{[k]}}\bigl(X_{\sigma(1)},\ldots,X_{\sigma(k)}\bigr)}_{p}
\le\,\varepsilon\norm{X_1}_{kp}\cdots\norm{X_k}_{kp}.
\end{align*}
\item [(ii)] Given $\varepsilon>0$,
there exists $\delta>0$ such that for every
$X\in\Sc_{sa}^p$ with $\norm{X}_{(n+1)p}\le\delta$
and for all $X_1,\ldots,X_n\in\Sc^p$,
$$
\bignorm{\bigl(D_p^{n}f(A+X) -D_p^nf(A)\bigr)(X_1,\ldots,X_n)}_p \le\,
\varepsilon\norm{X_1}_{np}\cdots\norm{X_n}_{np}.
$$
\end{itemize}
Since $\norm{\,\cdotp}_{kp}\le \norm{\,\cdotp}_{p}$ on $\Sc^p$, the results stated above imply Theorems \ref{C(n+1)} and \ref{C0}.
\end{remark}

We have the following strengthening of operator differentiability in the case of a bounded operator $A$.

\begin{theorem}
\label{Cor1}
Let $1<p<\infty$, $n\in\N$, and let $f:\R\to\C$ be a
locally Lipschitz function. Then $f$ is $n$ times continuously Fr\'{e}chet $\Sc^p$-differentiable at every bounded operator $A=A^*$ and \eqref{*2} holds if and only if $f\in C^n(\R)$.
\end{theorem}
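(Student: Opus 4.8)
The plan is to prove the two implications separately, with the ``if'' direction being the substantial one and the ``only if'' direction reducing to the known first-order necessity result of \cite{KPSS}.

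For the ``only if'' direction, suppose $f$ is once Fr\'echet $\Sc^p$-differentiable at every bounded $A=A^*$; by \cite[Theorem 7.17]{KPSS} this already forces $f\in C^1(\R)$. To bootstrap to $C^n(\R)$, I would exploit the explicit formula \eqref{*2} for $k=1$, namely $D_p^1 f(A)(X)=T_{f^{[1]}}^{A,A}(X)$, together with the fact that Fr\'echet $n$-differentiability of $f$ forces Fr\'echet $(n-1)$-differentiability of $A\mapsto D_p^1 f(A)$ as a map into $\Bc_1(\Sc^p,\Sc^p)$. Testing this differential on rank-one operators $X=\xi\otimes\eta$ and pairing against another rank-one operator recovers, via the spectral theorem for a two-point spectrum $A$ (or a perturbation argument isolating two eigenvalues), the divided difference $f^{[1]}(s,t)$ as a genuinely differentiable function of the underlying scalar parameters; iterating the argument through the successive differentials of $D_p^{k}f$ produces differentiability of $f^{[k]}$ in its arguments, which by the standard relation between smoothness of $f$ and of its divided differences yields $f\in C^n(\R)$. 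The technical care here is in passing from operator-level differentiability to scalar-level differentiability of divided differences; this is the step I expect to require the most attention, though it is a finite-dimensional computation once the test operators are chosen.

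For the ``if'' direction, assume $f\in C^n(\R)$ and let $A=A^*$ be bounded. The key point is that a bounded self-adjoint operator has compact spectrum, so only the restriction of $f$ to a compact interval $I\supset\sigma(A)$ matters, and for small $X\in\Sc^p_{sa}$ the spectrum of $A+X$ stays in a slightly larger compact interval. Hence I may replace $f$ by $g=f\cdot\chi$ where $\chi\in C_c^\infty(\R)$ equals $1$ on a neighborhood of this larger interval; then $g\in C^n(\R)$ with $g',\ldots,g^{(n)}$ all bounded and, in fact, compactly supported, so in particular $g^{(n)}\in C_0(\R)$ and $g^{[k]}$ is bounded for $k\le n$. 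Theorem \ref{C0} now applies to $g$ at $A$, giving that $g$ is $n$ times continuously Fr\'echet $\Sc^p$-differentiable at $A$ with $D_p^k g(A)(X_1,\ldots,X_k)=\sum_{\sigma\in\sym_k}T_{g^{[k]}}^{A,\ldots,A}(X_{\sigma(1)},\ldots,X_{\sigma(k)})$. Since $f$ and $g$ agree on a $\Sc^p_{sa}$-neighborhood of $A$ in the sense that $f(A+X)=g(A+X)$ for all small $X\in\Sc^p_{sa}$ (both sides depending only on values on the relevant compact set, where $f=g$), the functions $\psi_{A,f,p}$ and $\psi_{A,g,p}$ coincide near $0$, so $f$ inherits $n$ times continuous Fr\'echet $\Sc^p$-differentiability at $A$. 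Finally, $f^{[k]}=g^{[k]}$ on $\prod\sigma(A)$, and since $T_{f^{[k]}}^{A,\ldots,A}$ depends only on the restriction of $f^{[k]}$ to $\prod\sigma(A)$, formula \eqref{*2} transfers from $g$ to $f$, completing the proof.

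The only delicate point in the ``if'' direction is the localization: one must check that $f(A+X)$ genuinely depends only on the values of $f$ on a fixed compact set for all $X$ in a fixed $\Sc^p_{sa}$-neighborhood of $0$, which follows from the continuity of the spectrum under bounded (hence $\Sc^p$-small) self-adjoint perturbations, together with the remark in the excerpt that for bounded $A$ the operator $f(A+X)-f(A)$ depends only on the restrictions of $f$ to $\sigma(A)\cup\sigma(A+X)$. Everything else is a clean reduction to Theorem \ref{C0}. The genuinely new content over \cite{KPSS} is the arbitrary order $n$ and the continuity of the differential; both come for free once Theorem \ref{C0} is in hand and the spectral-localization observation is made precise.
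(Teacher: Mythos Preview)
Your ``if'' direction is correct and matches the paper exactly: replace $f$ by a $C^n$ function of compact support agreeing with $f$ on a neighborhood of $\sigma(A)$, then invoke Theorem \ref{C0}; the spectral-localization remark you single out is precisely the justification the paper gives.

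For ``only if'', the paper takes a more direct route via Proposition \ref{nc}(ii), and your path through divided differences is unnecessarily circuitous. Rather than analyzing $f^{[1]}(s,t)$, one takes $A=\sum_k \lambda_k Q_k$ diagonal with mutually orthogonal rank-one projections $Q_k$ and perturbs by $tQ_k$ for a single $k$. Then $f(A+tQ_k)-f(A)=(f(\lambda_k+t)-f(\lambda_k))Q_k$, so one reads off $D_{G,p}^{m}f(A+tQ_k)(Q_k)=f^{(m)}(\lambda_k+t)Q_k$ directly for each $m\le n$. The Fr\'echet condition at order $n$ gives, \emph{uniformly in $k$}, that $|f^{(n-1)}(\lambda_k+t)-f^{(n-1)}(\lambda_k)-tf^{(n)}(\lambda_k)|<\epsilon|t|$ for $|t|<\delta$; applying this with $(\lambda_k,t)=(\lambda_1,\lambda_j-\lambda_1)$ and $(\lambda_k,t)=(\lambda_j,\lambda_1-\lambda_j)$ and subtracting yields $|f^{(n)}(\lambda_j)-f^{(n)}(\lambda_1)|<2\epsilon$, hence continuity of $f^{(n)}$. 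This bypasses any analysis of $f^{[k]}$ as a multivariate function.

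Your proposed route is not wrong in spirit---testing on finite-rank operators to extract scalar information is exactly the idea---but the step you yourself flag as delicate (passing from operator-level differentiability to scalar differentiability of $f^{[1]}$, then iterating) is genuinely awkward on the diagonal $s=t$, and your sketch does not indicate how you would handle it. The diagonal-operator argument avoids this entirely and requires no bootstrapping from the $n=1$ case of \cite{KPSS}.
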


We also establish $n$th order G\^{a}teaux differentiability of $f$
under relaxed assumptions on $f^{(n)}$.

\begin{theorem}
\label{Thmi}
Let $1<p<\infty$, $n\in\N$. Let $f\in C^n(\R)$ satisfy $f',\ldots,f^{(n)}\in C_b(\R)$.
Then $f$ is $n-1$ times continuously Fr\'{e}chet $\Sc^p$-differentiable and the following assertions hold.
\begin{enumerate}[(i)]
\item For any $X_1,\ldots, X_{n-1}\in\Sc^p$, any $X_n\in \Sc^{p}_{sa}$,
and any $\epsilon>0$, there exists $\delta>0$ such that $\vert t\vert<\delta$ implies
\begin{align}
\label{l3p}
\nonumber
&\bigg\|\big(D_p^{n-1}f(A+tX_n)-D_p^{n-1}f(A)\big)(X_1,\ldots,X_{n-1})-t\sum_{\sigma\in\sym_n}
T_{f^{[n]}}^{A,\ldots,A}(X_{\sigma(1)},\ldots,X_{\sigma(n)})\bigg\|_{p}\\
&\le\epsilon\, |t|\,\max_{1\le i\le n}\prod_{r\in\{1,\ldots,n\}\setminus\{i\}}\|X_r\|_{np}.
\end{align}
\item The function $f$ is $n$ times G\^{a}teaux $\Sc^p$-differentiable at every $A=A^*$,  with
\begin{align}
\label{Thmifla2}
D_{G,p}^n f(A)(X)=n!\,T_{f^{[n]}}^{A,\ldots,A}(X,\ldots,X)
\end{align}
for all
$X\in\Sc_{sa}^p$.
\end{enumerate}
\end{theorem}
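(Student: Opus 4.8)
The plan is to prove the three assertions in the order: first the $(n-1)$ times continuous Fréchet $\Sc^p$-differentiability, then assertion (i), then deduce (ii) from (i). The basic strategy is to reduce the hypothesis "$f',\ldots,f^{(n)}\in C_b(\R)$" to the stronger hypotheses already handled in Theorems \ref{C(n+1)} and \ref{C0} by an approximation argument. Concretely, choose a mollifier $\rho_\delta$ and set $g_\delta = f*\rho_\delta$ (or a suitable variant that leaves lower derivatives essentially unchanged); then $g_\delta\in C^{n+1}(\R)$ with $g_\delta',\ldots,g_\delta^{(n+1)}\in C_b(\R)$, and $g_\delta^{(j)}\to f^{(j)}$ uniformly on $\R$ for $j=1,\ldots,n$, and locally uniformly for $j=0$. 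To obtain (i) at the critical order $n$ one only needs control of $f^{(n)}$ up to \emph{uniform} approximation and the divided difference bound, which is exactly what Theorem \ref{ThPSS} provides since $f^{(n)}\in C_b(\R)$ guarantees $f^{[n]}$ is bounded and $T^{A,\ldots,A}_{f^{[n]}}$ is a bounded $n$-linear map on $\Sc^{np}\times\cdots\times\Sc^{np}$.

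First I would establish the $(n-1)$ times continuous Fréchet $\Sc^p$-differentiability: since $f\in C^n(\R)$ with $f',\ldots,f^{(n)}\in C_b(\R)$, in particular $f',\ldots,f^{(n-1)}\in C_b(\R)$, and we may split $f^{(n)} = (f^{(n)})_{C_0} + (\text{slowly varying part})$ is not literally possible, but we can instead run the mollification argument at order $n-1$: the functions $g_\delta$ above satisfy the hypotheses of Theorem \ref{C(n+1)} with $n$ replaced by $n-1$, so each $g_\delta$ is $n-1$ times continuously Fréchet $\Sc^p$-differentiable at every $A=A^*$, with differentials given by \eqref{*2n+1}. Using Theorem \ref{ThPSS} to estimate $\|T^{A,\ldots,A}_{g_\delta^{[k]}} - T^{A,\ldots,A}_{f^{[k]}}\| \lesssim \|g_\delta^{(k)}-f^{(k)}\|_\infty \to 0$ for $k\le n-1$ on $\Sc^{kp}$-inputs, one gets that $D_p^{n-1}g_\delta f(A)$ converges uniformly (in $A$ and in the unit ball of inputs) to the candidate $\sum_{\sigma}T^{A,\ldots,A}_{f^{[n-1]}}(X_{\sigma(1)},\ldots)$; uniform limits of continuously differentiable maps whose derivatives converge uniformly are continuously differentiable, which gives the $(n-1)$st order Fréchet statement for $f$ with the expected formula.

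For assertion (i), fix $X_1,\ldots,X_{n-1}\in\Sc^p$ and $X_n\in\Sc^p_{sa}$ and $\epsilon>0$. Write the difference quotient for $f$ as the difference quotient for $g_\delta$ plus an error. The $g_\delta$-term is controlled by Remark \ref{Sharp}(i) applied to $g_\delta$ (which satisfies the hypotheses of Theorem \ref{C(n+1)}): there is $\delta_0>0$ so that for $|t|$ small the $g_\delta$ difference quotient is within $\frac{\epsilon}{3}|t|\prod\|X_r\|_{np}$ of $t\sum_\sigma T^{A,\ldots,A}_{g_\delta^{[n]}}(X_{\sigma(1)},\ldots,X_{\sigma(n)})$. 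The error between $\sum_\sigma T_{g_\delta^{[n]}}$ and $\sum_\sigma T_{f^{[n]}}$ is $\le \frac{\epsilon}{3}\prod\|X_r\|_{np}$ by Theorem \ref{ThPSS}\eqref{ThPSSi} once $\|g_\delta^{(n)}-f^{(n)}\|_\infty$ is small enough. The remaining error is $\|\bigl(D_p^{n-1}(f-g_\delta)(A+tX_n) - D_p^{n-1}(f-g_\delta)(A)\bigr)(X_1,\ldots,X_{n-1})\|_p$; using the already-established formula for $D_p^{n-1}$ of both $f$ and $g_\delta$ together with the telescoping identity for multiple operator integrals (perturbing the operator $A\rightsquigarrow A+tX_n$ one slot at a time, as in the standard derivation relating $D_p^{k}$ and $D_p^{k+1}$) and Theorem \ref{ThPSS}, this is bounded by $C\|(f-g_\delta)^{(n-1)}\|_\infty\cdot|t|\cdot(\text{products of }\Sc^{np}\text{-norms})$ plus terms in $\|(f-g_\delta)^{(n)}\|_\infty$, all of which we make $\le\frac{\epsilon}{3}|t|\prod\|X_r\|_{np}$ by choosing $\delta$ first. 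Combining the three bounds gives \eqref{l3p}. The main obstacle here is the bookkeeping in this last error estimate: one must express $D_p^{n-1}f(A+tX_n)-D_p^{n-1}f(A)$ as a sum of multiple operator integrals with one operator slot occupied by $A+tX_n$ and the others by $A$, and verify that each such term, after subtracting the corresponding $g_\delta$-term, carries a factor $\|(f-g_\delta)^{(n-1)}\|_\infty$ or $\|(f-g_\delta)^{(n)}\|_\infty$ and a factor $|t|$, uniformly; this requires the distinct-operators version of Theorem \ref{ThPSS}, which is available.

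Finally, assertion (ii) follows from (i) by specializing $X_1=\cdots=X_n=X\in\Sc^p_{sa}$: the left side of \eqref{l3p} becomes exactly the defining difference quotient for the $n$th Gâteaux derivative of $f$ at $A$ (recalling $\varphi_{A,X,f,p}^{(n-1)}(t) = D_p^{n-1}f(A+tX)(X,\ldots,X)$ once $(n-1)$st Fréchet differentiability is known), and \eqref{l3p} says this difference quotient divided by $t$ converges, as $t\to 0$, to $\sum_{\sigma\in\sym_n}T^{A,\ldots,A}_{f^{[n]}}(X,\ldots,X) = n!\,T^{A,\ldots,A}_{f^{[n]}}(X,\ldots,X)$, which is \eqref{Thmifla2}. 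Boundedness and homogeneity of order $n$ of $X\mapsto n!\,T^{A,\ldots,A}_{f^{[n]}}(X,\ldots,X)$ on $\Sc^p_{sa}$ is immediate from Theorem \ref{ThPSS}\eqref{ThPSSii}, so both conditions in the definition of $n$ times Gâteaux $\Sc^p$-differentiability hold.
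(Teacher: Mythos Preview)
Your reduction of (ii) to (i) is correct, and the observation that the $(n-1)$-times continuous Fr\'echet differentiability follows from Theorem~\ref{C(n+1)} is also right (in fact it follows \emph{directly}: the hypothesis $f\in C^n(\R)$ with $f',\ldots,f^{(n)}\in C_b(\R)$ is exactly the hypothesis of Theorem~\ref{C(n+1)} with $n$ replaced by $n-1$, so no approximation is needed there).

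However, your argument for (i) has a genuine gap. You assert that the mollification $g_\delta=f*\rho_\delta$ satisfies $g_\delta^{(n)}\to f^{(n)}$ uniformly on $\R$, and all three of your error estimates (the comparison of $T_{g_\delta^{[n]}}$ with $T_{f^{[n]}}$, and the telescoped third term via Lemma~\ref{LMOId2} and Theorem~\ref{ThPSS}) rely on $\|g_\delta^{(n)}-f^{(n)}\|_\infty$ being small. But uniform convergence of $f^{(n)}*\rho_\delta$ to $f^{(n)}$ requires $f^{(n)}$ to be \emph{uniformly} continuous, which is not implied by $f^{(n)}\in C_b(\R)$. For instance, if $f^{(n)}(x)=\sin(x^2)$ then $f^{(n)}\in C_b(\R)$, yet for every fixed $\delta>0$ one has $\|f^{(n)}*\rho_\delta-f^{(n)}\|_\infty\ge 1$, since near large $x$ the mollifier averages over many oscillations. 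No alternative global approximation of $f^{(n)}$ in sup-norm by $C^1_b$ functions can work either, because the uniform closure of $C^1_b(\R)$ in $C_b(\R)$ consists of uniformly continuous functions. (Note that for $j<n$ your claim \emph{is} correct, since $f^{(j)}$ has bounded derivative and is therefore Lipschitz; the failure occurs precisely at the top order.)

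The paper circumvents this by approximating the \emph{operator} rather than the function: it sets $P_m=E_A((-m,m))$, $A_m=P_mA$, $X_{i,m}=P_mX_iP_m$, applies the bounded-operator result Theorem~\ref{Cor1} at $A_m$ (where only the restriction of $f^{(n)}$ to a compact set matters, so uniform continuity is automatic), and then transfers back to $A$ using Lemma~\ref{LAm} together with $\|X_i-X_{i,m}\|_p\to 0$. The point is that the compactness is supplied by the \emph{fixed} perturbations $X_1,\ldots,X_n\in\Sc^p$ rather than by any global regularity of $f^{(n)}$; this is why assertion~(i) is stated for fixed $X_i$ and why the resulting differentiability is only G\^ateaux.
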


Finally, as a consequence of Theorem \ref{Thmi}, we will obtain the following estimate
for operator Taylor remainders. It generalizes the analogous result of
\cite[Theorem 4.1]{PSST} from bounded to unbounded operators $A$.

\begin{theorem}\label{Remainders}
\label{corpert}
Let $1<p<\infty$, $n\in\N$  and $A=A^*$.
Let $f\in C^n(\R)$ satisfy $f',\ldots,f^{(n)}\in C_b(\R)$ and let $X\in\Sc^{np}_{sa}$.
Denote
\begin{align*}
R_{n,p,A,X,f}=f(A+X)-f(A)-\sum_{k=1}^{n-1}\frac{1}{k!}D_{G,p}^k\, f(A)(X,\ldots,X).
\end{align*}
Then,
\begin{align}
\label{*5}
\|R_{n,p,A,X,f}\|_p\le c_{p,n}\|f^{(n)}\|_\infty\|X\|_{np}^{n}.
\end{align}
\end{theorem}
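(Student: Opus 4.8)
The plan is to derive the Taylor remainder estimate \eqref{*5} from the $n$th order G\^ateaux differentiability established in Theorem \ref{Thmi}(ii) by an elementary one-variable Taylor argument applied to the $\Sc^p$-valued function $\varphi_{A,X,f,p}$. First I would fix $A=A^*$, $f\in C^n(\R)$ with bounded derivatives, and $X\in\Sc^{np}_{sa}$. By Theorem \ref{Thmi} the function $f$ is $n-1$ times continuously Fr\'echet $\Sc^p$-differentiable and $n$ times G\^ateaux $\Sc^p$-differentiable at every self-adjoint operator, so the scalar-parametrized curve
\[
g(t):=\varphi_{A,X,f,p}(t)=f(A+tX)-f(A)
\]
is an $\Sc^p$-valued function that is $C^{n-1}$ on $\R$ and $n$ times differentiable, with $g^{(k)}(t)=D^k_{G,p}f(A+tX)(X)$ for $k=1,\ldots,n-1$ (the $\Sc^p$-neighborhood in Definition \ref{Sp-Diff} makes sense because $tX\in\Sc^p_{sa}$), and $g^{(n)}(0)=D^n_{G,p}f(A)(X)=n!\,T^{A,\ldots,A}_{f^{[n]}}(X,\ldots,X)$. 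With this identification, $R_{n,p,A,X,f}=g(1)-\sum_{k=0}^{n-1}\frac1{k!}g^{(k)}(0)$ is exactly the $n$th order Taylor remainder of $g$ at $0$ evaluated at $t=1$.

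The key step is then to express this remainder in integral form and bound it. Using the integral form of the Taylor remainder for a $C^{n-1}$ curve that is $n$ times differentiable, one has
\[
R_{n,p,A,X,f}=\frac1{(n-1)!}\int_0^1 (1-t)^{n-1}\bigl(g^{(n)}(t)-g^{(n)}(0)\bigr)\,dt+\frac1{n!}g^{(n)}(0)\,,
\]
but it is cleaner to avoid differentiability of $g^{(n)}$ altogether and instead write the remainder after $n-1$ derivatives:
\[
R_{n,p,A,X,f}=\frac1{(n-2)!}\int_0^1(1-t)^{n-2}\Bigl(g^{(n-1)}(t)-g^{(n-1)}(0)-t\,D^n_{G,p}f(A)(X)\Bigr)\,dt
\]
for $n\ge 2$ (with the obvious direct argument when $n=1$). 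The bracketed quantity is precisely controlled by Theorem \ref{Thmi}(i) with $X_1=\cdots=X_n=X$: it is $o(t)\|X\|_{np}^n$ as $t\to 0$. However, $o(t)$ control near $0$ is not by itself enough to bound the integral over $[0,1]$; I need a \emph{uniform} bound $\|g^{(n-1)}(t)-g^{(n-1)}(0)-t\,D^n_{G,p}f(A)(X)\|_p\le C\|X\|_{np}^n\,|t|$ valid for all $t\in[0,1]$. This is where the norm estimate of Theorem \ref{ThPSS} enters: since $g^{(n-1)}(t)=D^{n-1}_{G,p}f(A+tX)(X)=(n-1)!\,T^{A+tX,\ldots,A+tX}_{f^{[n-1]}}(X,\ldots,X)$ is, by \eqref{ThPSSi}, bounded by $c_{p,n-1}\|f^{(n-1)}\|_\infty\|X\|_{(n-1)p}^{n-1}$ uniformly in $t$, and by homogeneity $g^{(n-1)}(t)-g^{(n-1)}(0)$ is $O(\|X\|_{(n-1)p}^{n-1})$; combined with $\|D^n_{G,p}f(A)(X)\|_p\le c_{p,n}\|f^{(n)}\|_\infty\|X\|_{np}^n$ from \eqref{ThPSSi} (applied to $T^{A,\ldots,A}_{f^{[n]}}$), one gets a crude uniform bound. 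Interpolating this crude bound with the sharp $o(t)$-behavior near $0$ from Theorem \ref{Thmi}(i) — more precisely, splitting $\int_0^1 = \int_0^{1}$ and using homogeneity in $X$ to reduce to small $\|X\|_{np}$ or rescaling $t\mapsto tX$ — yields the desired $C\|X\|_{np}^n$ bound on the integrand uniformly in $t$, and integrating gives \eqref{*5}.

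The cleanest route, and the one I would actually carry out, is to reduce directly to the case $\|X\|_{np}\le\delta$ by homogeneity: apply Theorem \ref{Thmi}(i) to the pair $(A,\,sX)$ in place of $(A,X)$, which after substitution $t\mapsto ts$ shows that for $\|X\|_{np}$ small enough the integrand bracket is $\le\varepsilon\|X\|_{np}^n\,|t|$ for all $t\in[0,1]$; then $\|R_{n,p,A,X,f}\|_p\le\varepsilon\|X\|_{np}^n\cdot\frac1{(n-2)!}\int_0^1(1-t)^{n-2}t\,dt\le\varepsilon\|X\|_{np}^n$. For general $X\in\Sc^{np}_{sa}$ one writes $X=\sum_{j=1}^N X_j$ with $\|X_j\|_{np}$ small (a telescoping/chain-rule argument along the segment $[A,A+X]$), or more simply observes that the constant $c_{p,n}$ in \eqref{*5} is allowed to depend on $p,n$ and tracks it through the uniform estimate above rather than through the $\varepsilon$-version. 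The main obstacle is precisely this passage from the \emph{pointwise-near-zero} estimate of Theorem \ref{Thmi}(i) to a \emph{uniform-over-$[0,1]$} estimate with the correct homogeneity in $\|X\|_{np}$; once the uniform bound on the integrand is in hand, the integration against $(1-t)^{n-2}/(n-2)!$ is routine. A secondary technical point to check is that $g$ is genuinely $C^{n-1}$ as an $\Sc^p$-valued map so that the integral form of the remainder is legitimate — this is exactly the ``$n-1$ times continuously Fr\'echet $\Sc^p$-differentiable'' conclusion of Theorem \ref{Thmi}, restricted to the curve $t\mapsto A+tX$.
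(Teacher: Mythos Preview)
Your integral-remainder framework can be made to work, but the execution you outline has a genuine gap, and the paper's argument is completely different and far shorter.

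The gap is exactly the obstacle you flag: passing from the pointwise $o(t)$ control of Theorem~\ref{Thmi}(i) to a bound on $[0,1]$ with the \emph{correct} constant $c_{p,n}\|f^{(n)}\|_\infty$. None of your proposed fixes delivers this. The ``crude uniform bound'' on $g^{(n-1)}(t)$ via \eqref{ThPSSi} produces a constant involving $\|f^{(n-1)}\|_\infty$, not $\|f^{(n)}\|_\infty$, so it cannot yield \eqref{*5}. The homogeneity/rescaling route only shows $\|R_{n,p,A,X,f}\|_p=o(\|X\|_{np}^n)$ as $\|X\|_{np}\to0$: the $\delta$ in Theorem~\ref{Thmi}(i) depends on the particular $X_1,\ldots,X_n$ (and on $A$, $f$), so no universal constant emerges. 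And telescoping $X=\sum_j X_j$ fails because $R_n$ is not additive in $X$. (Incidentally, your second displayed integral identity for $R_{n,p,A,X,f}$ is off by $\tfrac1{n!}g^{(n)}(0)$; the correct $(n-1)$st-order remainder formula has only $g^{(n-1)}(t)-g^{(n-1)}(0)$ in the bracket.)

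The paper bypasses all of this in three lines: by Lemma~\ref{L1} and induction on $n$ one has the closed-form identity
\[
R_{n,p,A,X,f}=\big(T_{f^{[n-1]}}^{A+X,A,\ldots,A}-T_{f^{[n-1]}}^{A,\ldots,A}\big)(X,\ldots,X)
=T_{f^{[n]}}^{A+X,A,\ldots,A}(X,\ldots,X),
\]
and then \eqref{*5} is immediate from Theorem~\ref{ThPSS}. The induction step is just $R_k=R_{k-1}-\tfrac1{(k-1)!}D^{k-1}_{G,p}f(A)(X)
=T^{A+X,A,\ldots,A}_{f^{[k-1]}}(X,\ldots,X)-T^{A,\ldots,A}_{f^{[k-1]}}(X,\ldots,X)$, to which Lemma~\ref{L1} with $i=1$ applies. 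If you want to salvage your approach, the missing ingredient is to bound $g^{(n-1)}(t)-g^{(n-1)}(0)$ directly by Lemma~\ref{LMOId2}: this gives $\|g^{(n-1)}(t)-g^{(n-1)}(0)\|_p\le n!\,c_{p,n}\|f^{(n)}\|_\infty\,|t|\,\|X\|_{np}^n$ uniformly in $t$, and then the (corrected) integral against $(1-t)^{n-2}/(n-2)!$ gives \eqref{*5}. But once you invoke Lemma~\ref{LMOId2} you are one step from the paper's closed-form identity anyway, so the integral detour buys nothing.
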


The rest of this section is dedicated to the proofs of our main results.

The next proposition extends the result of \cite[Proposition 7.14]{KPSS} from $n=1$ to the case of a
general $n\in\N$.

\begin{proposition}
\label{nc}
Let $f\colon\R\to\C$ be a locally Lipschitz function, $1<p<\infty$, $n\in\N$.
Then, the following assertions hold.
\begin{enumerate}[(i)]
\item If $f$ is $n$ times G\^{a}teaux $\Sc^p$-differentiable at every bounded self-adjoint operator, then $f$ is $n$ times differentiable on $\R$ and $f',\ldots,f^{(n)}$ are bounded on compact subsets of $\R$.
Moreover, if $f\in \text{\rm Lip}(\R)$ is $n$ times G\^{a}teaux $\Sc^p$-differentiable at every self-adjoint operator, then $f',\ldots,f^{(n)}$ are bounded on $\R$.

\item If $f$ is $n$ times Fr\'{e}chet $\Sc^p$-differentiable at every bounded self-adjoint operator, then
$f\in C^n(\R)$.
\end{enumerate}
\end{proposition}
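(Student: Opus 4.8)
The plan is to reduce the operator-valued differentiability statements to scalar differentiability by testing against well-chosen finite-rank perturbations, exploiting the fact that differentiability of $\varphi_{A,X,f,p}$ forces differentiability of its matrix entries. I would treat (i) first and then bootstrap (ii) from (i) by a compactness/continuity argument.

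For part (i), fix $\lambda_0\in\R$ and a compact interval $I$ containing $\lambda_0$. Take $A$ to be a diagonal self-adjoint operator whose eigenvalues accumulate at $\lambda_0$, say $A=\sum_j\mu_j\langle\cdot,e_j\rangle e_j$ with all $\mu_j\in I$, and take $X$ to be a rank-one (or finite-rank) self-adjoint perturbation built from the $e_j$'s. The key computation is the standard one: for a diagonal $A$ and $X=\sum_{i,j}x_{ij}\langle\cdot,e_j\rangle e_i$, the $(i,j)$-entry of $f(A+tX)-f(A)$, when differentiated in $t$ at $0$, produces divided differences $f^{[1]}(\mu_i,\mu_j)$, and iterating to $n$th order produces $n$th order divided differences $f^{[n]}(\mu_{i_0},\dots,\mu_{i_n})$. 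Since $\varphi_{A,X,f,p}(t)=f(A+tX)-f(A)$ is assumed $n$ times differentiable at $0$ in $\Sc^p$, each matrix entry is $n$ times differentiable at $0$, which (after choosing the $\mu_j$ to cluster at $\lambda_0$ and passing to the limit in the divided-difference expressions) yields that $f$ is $n$ times differentiable near $\lambda_0$; this is essentially the $n=1$ argument of \cite[Proposition 7.14]{KPSS} run coordinatewise. To get boundedness of $f',\dots,f^{(k)}$ on $I$, I would use the boundedness hypothesis (ii) in the definition of $n$ times G\^ateaux differentiability: $X\mapsto\varphi_{A,X,f,p}^{(k)}(0)$ is a bounded homogeneous operator of degree $k$, so for unit-norm rank-one $X$ supported near a point $\lambda\in I$ the norm of the $k$th derivative is controlled uniformly, and the relevant matrix entry is (up to a combinatorial constant) $f^{[k]}(\lambda,\dots,\lambda)=\frac{1}{k!}f^{(k)}(\lambda)$, giving a uniform bound on $f^{(k)}$ over $I$. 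For the unbounded case, when $f\in\text{Lip}(\R)$ and $f$ is $n$ times G\^ateaux $\Sc^p$-differentiable at \emph{every} self-adjoint operator, I would run the same argument but now allow $A$ to be an unbounded diagonal operator whose spectrum is an arbitrary unbounded discrete set; the uniform bound on the homogeneous operator norm $\|D_{G,p}^k f(A)\|$ is no longer automatic across all $A$, so instead I would localize: for each $\lambda\in\R$ pick $A$ with $\lambda$ in its spectrum, note $\varphi_{A,X,f,p}$ is well defined by \cite{BS3,PS} since $f$ is globally Lipschitz, and the local argument gives $|f^{(k)}(\lambda)|$ bounded by a constant depending only on the operator norm of the $k$th G\^ateaux derivative at that particular $A$ — and here the boundedness clause in the definition, together with a diagonal trick packing countably many "test locations" into a single operator, yields a global bound.

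For part (ii), suppose $f$ is $n$ times Fr\'echet $\Sc^p$-differentiable at every bounded self-adjoint operator. Frechet differentiability implies G\^ateaux differentiability with the multilinear differential restricting to the homogeneous G\^ateaux derivatives, so by part (i) we already know $f\in C^{n-1}(\R)$ with $f^{(n)}$ existing pointwise and $f',\dots,f^{(n)}$ bounded on compacts; it remains to show $f^{(n)}$ is \emph{continuous}. Here I would use that Fr\'echet differentiability at every point $A$ in a neighborhood is part of the hypothesis, hence $A\mapsto D_p^{n-1}f(A)$ is itself Fr\'echet differentiable, in particular continuous as a map into $\Bc_{n-1}(\Sc^p\times\cdots\times\Sc^p,\Sc^p)$. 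Translating this along a path of diagonal operators $A_s$ whose spectra shift, and reading off a fixed matrix entry, forces the scalar $\lambda\mapsto f^{(n)}(\lambda)$ to be continuous: more precisely, continuity of $D_p^{n-1}f(A)$ in $A$ at a diagonal operator, tested against fixed rank-one $X_1,\dots,X_{n-1}$ localized at a spectral point $\lambda$, gives continuity of $\lambda\mapsto f^{[n-1]}(\lambda,\dots,\lambda)$'s derivative, i.e. of $f^{(n)}$. The main obstacle, and the step I expect to require the most care, is precisely this passage from \emph{operator} continuity/differentiability to \emph{scalar} $C^n$-regularity: one must choose the diagonal operators, their spectra, and the finite-rank test perturbations so that (a) the matrix entries of the various derivatives genuinely realize the divided differences $f^{[k]}$ evaluated at the chosen eigenvalues, (b) the limits defining $f^{[n]}$ as eigenvalues collide are justified (uniform control of difference quotients, which is where the boundedness of lower-order derivatives from part (i) is used), and (c) the $\Sc^p$-operator-norm estimates are not wasteful — using that the $\Sc^p$-norm of a rank-one operator is its Hilbert-space norm so no $p$-dependent constants intrude. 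Once these bookkeeping points are handled, both (i) and (ii) follow by the indicated coordinatewise reductions.
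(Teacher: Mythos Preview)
For part (i), your divided-difference route is heavier than needed and risks circularity (justifying the divided-difference formula for the higher derivatives presupposes regularity of $f$). The paper's device is much cleaner: take $A=\sum_k\lambda_kQ_k$ with mutually orthogonal rank-one projections $Q_k$ and perturb by $X=Q_k$, which \emph{commutes} with $A$. Then $f(A+tQ_k)-f(A)=\bigl(f(\lambda_k+t)-f(\lambda_k)\bigr)Q_k$ exactly, so differentiability of $\varphi_{A,Q_k,f,p}$ at $0$ is literally scalar differentiability of $t\mapsto f(\lambda_k+t)$, and $D_{G,p}^mf(A)(Q_k)=f^{(m)}(\lambda_k)Q_k$ for every $m\le n$. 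Boundedness of the homogeneous map $D_{G,p}^mf(A)$ together with $\norm{Q_k}_p=1$ gives $\sup_k|f^{(m)}(\lambda_k)|<\infty$; running $\{\lambda_k\}$ over bounded (resp.\ arbitrary) sequences yields the claim. No off-diagonal entries, divided differences, or colliding-eigenvalue limits are required.

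For part (ii) your argument has a genuine gap. You assert that ``continuity of $D_p^{n-1}f(A)$ in $A$ \ldots\ gives continuity of $\lambda\mapsto f^{[n-1]}(\lambda,\ldots,\lambda)$'s derivative, i.e.\ of $f^{(n)}$,'' but continuity of $A\mapsto D_p^{n-1}f(A)$ only yields continuity of $f^{(n-1)}$; it says nothing about its derivative, and the hypothesis does \emph{not} include continuity of $D_p^nf$. What must be extracted instead is the \emph{uniformity} built into Fr\'echet (as opposed to G\^ateaux) differentiability at a single operator. With $A$ and $Q_k$ as above, the Fr\'echet estimate at $A$ reads
\[
\bignorm{D_{G,p}^{n-1}f(A+tQ_k)(Q_k)-D_{G,p}^{n-1}f(A)(Q_k)-t\,D_{G,p}^nf(A)(Q_k)}_p=o(t),
\]
and the $o(t)$ is uniform in $k$ because $\norm{tQ_k}_p=|t|$ independently of $k$. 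In scalars this is $\bigl|f^{(n-1)}(\lambda_k+t)-f^{(n-1)}(\lambda_k)-tf^{(n)}(\lambda_k)\bigr|<\varepsilon|t|$ for all $k$ once $|t|<\delta$. Now fix $\lambda_1$, let $\lambda_j\to\lambda_1$, apply the estimate with $(k,t)=(1,\lambda_j-\lambda_1)$ and with $(k,t)=(j,\lambda_1-\lambda_j)$, and add: the $f^{(n-1)}$ terms cancel and one is left with $|\lambda_j-\lambda_1|\,\bigl|f^{(n)}(\lambda_j)-f^{(n)}(\lambda_1)\bigr|<2\varepsilon|\lambda_j-\lambda_1|$, hence $|f^{(n)}(\lambda_j)-f^{(n)}(\lambda_1)|<2\varepsilon$. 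Packing all test points $\lambda_k$ into one operator $A$ to force this uniformity, and then the two-point swap trick, are the ideas your sketch is missing.
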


\begin{proof}
Let $\{Q_k\}_{k=1}^\infty$ be a sequence of mutually orthogonal
rank one orthogonal projections with
$sot\text{-}\sum_{k=1}^\infty Q_k=I$, and note $\|Q_k\|_p=1$.
Let $\{\la_k\}_{k=1}^\infty$ be a sequence in $\R$ and define
\begin{align*}
A=sot\text{-}\sum_{k=1}^\infty\la_k Q_k.
\end{align*}
We note that $A$ is a self-adjoint operator
satisfying $AQ_k=Q_kA=\la_k Q_k$. Furthermore, $\{\la_k\}_{k=1}^\infty$ is bounded
if and only if $A$ is bounded.

(i) Assume that $f$ is $n$ times G\^{a}teaux $\Sc^p$-differentiable at $A$.
For any $k\in\N$ and any $t\in\R$,
$$
f(A+tQ_k)-f(A)=\bigl(f(\la_k+t)-f(\la_k)\bigr)Q_k,
$$
hence
$$
f(\la_k+t)-f(\la_k) =\tr\bigl(f(A+tQ_k)-f(A)\bigr).
$$
By the composition rule, this implies that  $f$ is $n$ times differentiable on $\R$.

Moreover
for any $k\in\N$, for any $t\in\R$ and for any $m=1,\ldots,n$, we have
\begin{equation}\label{fmQk}
f^{(m)}(\lambda_k +t)Q_k=D_{G,p}^{m}f(A+tQ_k)(Q_k).
\end{equation}
Hence
\begin{align}
\label{bddseq}
\sup_{k\in\N}|f^{(m)}(\la_k)|=\sup_{k\in\N}\|D_{G,p}^m f(A)(Q_k)\|_p<\infty
\end{align}
for any $m=1,\ldots,n$.

Applying \eqref{bddseq} to the bounded sequences $\{\la_k\}_{k=1}^\infty$ implies
that $f',\ldots,f^{(n)}$ are bounded on compact subsets of $\R$ whenever $f$ is $n$
times G\^{a}teaux $\Sc^p$-differentiable at every bounded self-adjoint operator.
Applying \eqref{bddseq} to all sequences $\{\la_k\}_{k=1}^\infty$ implies that
$f',\ldots,f^{(n)}$ are bounded whenever $f$ is $n$ times G\^{a}teaux
$\Sc^p$-differentiable at every self-adjoint operator.

(ii) By part (i), $f$ is $n$ times differentiable. It follows from \eqref{fmQk} that
\begin{align*}
&\big\|\big(f^{(n-1)}(\la_k+t)-f^{(n-1)}(\la_k)-t f^{(n)}(\la_k)\big)Q_k\big\|_p\\
&=\big\|D_{G,p}^{n-1}f(A+tQ_k)(Q_k)-D_{G,p}^{n-1}f(A)(Q_k)-t D_{G,p}^{n}f(A)(Q_k)\big\|_p=o(t)\quad\text{as}\quad t\to 0,
\end{align*}
uniformly in $k$. Hence, given $\epsilon>0$, there exists $\delta>0$ such that
\begin{align}
\label{n-1eps}
\big|f^{(n-1)}(\la_k+t)-f^{(n-1)}(\la_k)-tf^{(n)}(\la_k)\big|<\epsilon |t|
\end{align}
whenever $|t|<\delta$ and $k\in\N$.

Fix $\lambda_1\in\R$ and let $\{\la_k\}_{k=1}^\infty$ be a sequence in $\R$ converging to $\la_1$.
Then, there exists $j_0\in\N$ such that for every $j\in\N$, $j\ge j_0$, we have $|\la_j-\la_1|<\delta$. Applying \eqref{n-1eps} with $k=1$ and $t=\la_j-\la_1$ implies
\begin{align*}
\big|f^{(n-1)}(\la_j)-f^{(n-1)}(\la_1)-(\la_j-\la_1)f^{(n)}(\la_1)\big|<\epsilon |\la_j-\la_1|
\end{align*}
and applying \eqref{n-1eps} with $k=j$ and $t=\la_1-\la_j$ implies
\begin{align*}
\big|f^{(n-1)}(\la_1)-f^{(n-1)}(\la_j)-(\la_1-\la_j)f^{(n)}(\la_j)\big|<\epsilon |\la_j-\la_1|.
\end{align*}
Therefore, $|f^{(n)}(\la_j)-f^{(n)}(\la_1)|<2\epsilon$ whenever $j\ge j_0$, implying
\begin{align*}
\lim_{j\rightarrow\infty} f^{(n)}(\la_j)=f^{(n)}(\la_1)
\end{align*}
for every sequence $\{\la_j\}_{j=1}^\infty$ converging to $\la_1$, for every $\la_1\in\R$.
Thus, $f^{(n)}$ is continuous.
\end{proof}

We continue with important technical lemmas. All operators
in the next statements are well defined thanks to Theorem \ref{ThPSS}.

\begin{lemma}
\label{L1}
Let $1<p<\infty$, $n\in\N$, $n\ge 2$, and $f\in C^n(\R)$, $f^{(n-1)},f^{(n)}\in C_b(\R)$.
Let $A_1,\ldots,A_{n-1},A,B$ be self-adjoint
operators with $B-A \in\Sc^p$, let $X_1,\ldots,X_{n-1}\in\Sc^p$.
Then, for every $i=1,\ldots,n$,
\begin{align}
\label{l2*}
\nonumber
&T_{f^{[n-1]}}^{A_1,\ldots,A_{i-1},A,A_i,\ldots,A_{n-1}}(X_1,\ldots,X_{n-1})
-T_{f^{[n-1]}}^{A_1,\ldots,A_{i-1},B,A_i,\ldots,A_{n-1}}(X_1,\ldots,X_{n-1})\\
&=T_{f^{[n]}}^{A_1,\ldots,A_{i-1},A,B,A_i,\ldots,A_{n-1}}
(X_1,\ldots,X_{i-1},A-B,X_i,\ldots,X_{n-1}).
\end{align}
\end{lemma}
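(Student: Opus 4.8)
The plan is to prove this by reducing to the case where all the generating operators coincide, and then invoking the fundamental perturbation identity for divided differences. First I would observe that, by Theorem \ref{ThPSS}, every multiple operator integral appearing in \eqref{l2*} is a well-defined bounded multilinear map, since $f^{(n-1)},f^{(n)}\in C_b(\R)$ guarantee that both $f^{[n-1]}$ and $f^{[n]}$ are bounded continuous functions on the appropriate Euclidean spaces; in particular the right-hand side makes sense because $A-B\in\Sc^p$. So the identity is an equality of elements of $\Sc^p$, and it suffices to test it against arbitrary $X\in\Sc^{p'}$ via the trace pairing, or alternatively to verify it directly on the level of the defining approximating sums $S_{\varphi,r}$.

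Next I would recall the key algebraic fact about divided differences: for any real $x,y,z_1,\ldots,z_{n-1}$,
\begin{align*}
f^{[n-1]}(z_1,\ldots,z_{i-1},x,z_i,\ldots,z_{n-1})-f^{[n-1]}(z_1,\ldots,z_{i-1},y,z_i,\ldots,z_{n-1})
=(x-y)\,f^{[n]}(z_1,\ldots,z_{i-1},x,y,z_i,\ldots,z_{n-1}),
\end{align*}
which is immediate from the recursive definition of $f^{[n]}$ (and extends to $x=y$ by continuity). The strategy is to feed this pointwise identity into the discretized sums defining the multiple operator integrals. Concretely, writing $E^{j}_{l,r}$ for the spectral projections of the relevant operators and inserting a resolution of the identity $\sum_{l}E^{B}_{l,r}$ between the $A$-block and the $X_i$-block, the approximating sum for $T_{f^{[n-1]}}^{\ldots,A,\ldots}(X_1,\ldots,X_{n-1})$ becomes a sum over one extra index in which the coefficient is $f^{[n-1]}$ evaluated with the argument $l/r$ in the $i$-th slot; similarly for the $B$-term. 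Subtracting and applying the divided-difference identity turns the coefficient into $(\tfrac{l}{r}-\tfrac{l'}{r})f^{[n]}(\ldots)$, and the scalar factor $(\tfrac{l}{r}-\tfrac{l'}{r})$ is exactly what is produced by inserting $A-B$ (approximated by $\sum_{l,l'}(\tfrac{l}{r}-\tfrac{l'}{r})E^A_{l,r}(A-B)E^B_{l',r}$, up to the usual discretization error) between the $E^A$ and $E^B$ blocks. This is precisely the approximating sum for the right-hand side of \eqref{l2*}, so passing to the limit $r\to\infty$ (legitimate by Theorem \ref{ThPSS} and the strong convergence in Definition \ref{MOIPSS}) yields the claimed identity. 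An alternative, cleaner route is to note that when all of $A_1,\dots,A_{n-1},A,B$ have finite spectra contained in $\tfrac1r\Z$, everything is a genuine finite sum and the identity is a routine bookkeeping computation using the pointwise divided-difference formula; the general case then follows from the density/approximation arguments already used in the proof of Theorem \ref{ThPSS} together with the continuity of $f^{[n-1]}$ and $f^{[n]}$.

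The main obstacle will be justifying the manipulation of the infinite series and the interchange of the limit in $r$ with the insertion of the extra resolution of the identity: one must ensure that inserting $\sum_{l'}E^{B}_{l',r}$ and then reorganizing the sum does not disturb the $\Sc^p$-convergence guaranteed by Definition \ref{MOIPSS}, and that the discretization of the operator $A-B$ (which a priori is only controlled in $\Sc^p$, while $A$ and $B$ individually may be unbounded) is compatible with the convergence of the multiple operator integral $T_{f^{[n]}}^{\ldots,A,B,\ldots}$. This is handled by the uniform boundedness in Theorem \ref{ThPSS} (the constant $c_{p,n}$ is independent of the operators) plus a standard approximation of $A-B\in\Sc^p$ by its spectral truncations, so that one never needs $A,B$ bounded — only their difference in $\Sc^p$. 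Once the convergence is under control, the identity is purely formal. Hence I expect the proof to be short: state the divided-difference identity, reduce to finite spectra (or insert the resolution of the identity and pass to the limit), and cite Theorem \ref{ThPSS} for all boundedness and convergence claims.
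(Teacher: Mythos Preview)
Your approach is different from the paper's, and the step you yourself flag as the main obstacle is a genuine gap. After inserting the resolution of the identity and applying the divided-difference formula, the manipulated left-hand side carries $(\tfrac{l}{r}-\tfrac{l'}{r})E^A_{l,r}E^B_{l',r}$ in the $i$th slot, whereas the $r$th approximant of the right-hand side carries $E^A_{l,r}(A-B)E^B_{l',r}$. The discrepancy is $E^A_{l,r}(A-\tilde A_r)E^B_{l',r}-E^A_{l,r}(B-\tilde B_r)E^B_{l',r}$, where $\tilde A_r,\tilde B_r$ are the step discretizations of $A,B$. Each of $A-\tilde A_r$, $B-\tilde B_r$ is bounded with norm $\le 1/r$, but for unbounded $A,B$ neither lies in any $\Sc^q$, so the uniform $\Sc^p$-bound of Theorem~\ref{ThPSS} does not apply to this error term: that theorem requires the inserted operator to lie in $\Sc^{np}$. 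Your fallback of reducing to finite spectra and invoking the approximation in the proof of Theorem~\ref{ThPSS} runs into the same wall---that reduction is designed for a norm inequality, not an identity, and there is no evident way to approximate $A$ and $B$ simultaneously by finite-spectrum operators while keeping their difference close to $A-B$ in $\Sc^p$ and making all three operator integrals converge. Approximating $A-B$ alone by its spectral truncations does nothing for the spectral projections $E^A,E^B$ governing the sums.

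The paper sidesteps all of this. It first takes $p=2$, where Proposition~\ref{Coincide0} identifies $T_{f^{[k]}}$ with the $w^*$-continuous map $\Gamma^{A_1,\ldots}(f^{[k]})$ of Definition~\ref{MOICLS}; in that framework \eqref{l2*} is already known from \cite[Corollary~4.4]{CLMSS}. This immediately gives $1<p\le 2$ by the inclusion $\Sc^p\subset\Sc^2$. For $p>2$ one approximates $A-B$ in $\Sc^p$ by $K_m\in\Sc^2_{sa}$, applies the $\Sc^2$-identity with $B$ replaced by $A-K_m$, and passes to the limit using resolvent strong convergence (\cite[Proposition~3.1]{CLMSS}) together with Theorem~\ref{ThPSS}; a final density argument in the $X_j$ removes the restriction $X_j\in\Sc^2$. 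So the divided-difference identity is indeed the heart of the matter, but the transfer to operators goes through the $\Gamma$-calculus at $p=2$ rather than through direct manipulation of the sums $S_{\varphi,r}$.
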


\begin{proof}

If $p=2$, then \eqref{l2*} follows from \cite[Corollary 4.4]{CLMSS}
because the transformations $T$ and
$\Gamma$ given by Definitions \ref{MOIPSS} and \ref{MOICLS} coincide
on $\Sc^2\times\cdots\times \Sc^2$ (see Proposition \ref{L}).

If $1<p<2$, then $\Sc^p\subset\Sc^2$, so \eqref{l2*}
holds for all $X_1,\ldots,X_{n-1},A-B\in\Sc^p$.

Let $p>2$ and recall that $\Sc^2\subset \Sc^p$, with
\begin{align}
\label{p2}
\|\cdot\|_p\le\|\cdot\|_2.
\end{align}
Assume that $X_1,\ldots,X_{n-1}\in\Sc^2$ and $A-B\in\Sc^p$. Let $\{K_m\}_{m}
\subset\Sc^{2}_{sa}$ be such that
\begin{align*}
\|A-B-K_m\|_p\rightarrow 0\quad\text{as}\quad m\rightarrow\infty.
\end{align*}
For brevity, we introduce the notations
\begin{align}
\label{notation1}
\T_{f^{[r]}}^{{\bf A},i,F}:=T_{f^{[r]}}^{A_1,\ldots,A_{i-1},F,A_i,
\ldots,A_r}(X_1,\ldots,X_r)
\end{align}
and
\begin{align}
\label{notation2}
\T_{f^{[r]}}^{{\bf A},i,F,G}(D):=T_{f^{[r]}}^{A_1,\ldots,A_{i-1},F,G,A_i,
\ldots,A_{r-1}}(X_1,\ldots,X_{i-1},D,X_i,\ldots,X_{r-1}).
\end{align}
We have
\begin{align}
\label{l2p1}
\T_{f^{[n-1]}}^{{\bf A},i,A}-\T_{f^{[n-1]}}^{{\bf A},i,B}=\big(\T_{f^{[n-1]}}^{{\bf A},i,A-K_m}-\T_{f^{[n-1]}}^{{\bf A},i,B}\big)+\big(\T_{f^{[n-1]}}^{{\bf A},i,A}-\T_{f^{[n-1]}}^{{\bf A},i,A-K_m}\big).
\end{align}
Since $\{A-K_m\}_{m}$ resolvent strongly converges to $B$, by \cite[Proposition 3.1]{CLMSS} and \eqref{p2}, the first group of summands in \eqref{l2p1} satisfies
\begin{align}
\label{l2p2}
\Sc^p\text{-}\lim_{m\rightarrow\infty}\big(\T_{f^{[n-1]}}^{{\bf A},i,A-K_m}-\T_{f^{[n-1]}}^{{\bf A},i,B}\big)=0.
\end{align}
By \eqref{l2*} applied in $\Sc^2$ to the second group of summands in \eqref{l2p1},
\begin{align}
\label{l2p3}
\big(\T_{f^{[n-1]}}^{{\bf A},i,A}-\T_{f^{[n-1]}}^{{\bf A},i,A-K_m}\big)&=\T_{f^{[n]}}^{{\bf A},i,A,A-K_m}(K_m)\\
\nonumber
&=\T_{f^{[n]}}^{{\bf A},i,A,A-K_m}(K_m-(A-B))+\T_{f^{[n]}}^{{\bf A},i,A,A-K_m}(A-B).
\end{align}
By Theorem \ref{ThPSS}\eqref{ThPSSii},
\begin{align}
\label{l2p4}
\nonumber
&\big\|\T_{f^{[n]}}^{{\bf A},i,A,A-K_m}(K_m-(A-B))\big\|_p\\
&\le c_{p,n}\|f^{(n)}\|_\infty\|X_1\|_p
\cdots\|X_{n-1}\|_p\|A-B-K_m\|_p\rightarrow 0\quad\text{as}\quad m\rightarrow\infty.
\end{align}
By \cite[Proposition 3.1]{CLMSS} and \eqref{p2}, for every $L\in\Sc^2$,
\begin{align*}
\Sc^p\text{-}\lim_{m\rightarrow\infty}
&\big(\T_{f^{[n]}}^{{\bf A},i,A,A-K_m}(L)-\T_{f^{[n]}}^{{\bf A},i,A,B}(L)\big)=0,
\end{align*}
that is, given $\epsilon>0,L\in\Sc^2$, there exists $m_{\epsilon,L}\in\N$
such that for every natural $m\ge m_{\epsilon,L}$,
\begin{align}
\label{l2p5}
\big\|\T_{f^{[n]}}^{{\bf A},i,A,A-K_m}(L)-\T_{f^{[n]}}^{{\bf A},i,A,B}(L)\big\|_p<\epsilon.
\end{align}
Given $\epsilon>0$, let $L\in\Sc^2$ be such that
\begin{align*}
\|A-B-L\|_p<\frac{\epsilon}{2\, c_{np,n}\,\|f^{(n)}\|_\infty\|X_1\|_p\cdots\|X_{n-1}\|_p}.
\end{align*}
By Theorem \ref{ThPSS}\eqref{ThPSSii}, we obtain
\begin{align}
\label{l2p6}
\big\|\T_{f^{[n]}}^{{\bf A},i,A,A-K_m}(A-B-L)-\T_{f^{[n]}}^{{\bf A},i,A,B}(A-B-L)\big\|_p< \epsilon.
\end{align}
Combining \eqref{l2p5} and \eqref{l2p6} implies
\begin{align}
\label{l2p7}
\Sc^p\text{-}\lim_{m\rightarrow\infty}\big(\T_{f^{[n]}}^{{\bf A},i,A,A-K_m}-
\T_{f^{[n]}}^{{\bf A},i,A,B}\big)(A-B)=0.
\end{align}
Combining \eqref{l2p1}--\eqref{l2p4} and \eqref{l2p7} implies
\eqref{l2*} for $X_1,\ldots,X_{n-1}\in\Sc^2$ and $A-B\in\Sc^p$, $p>2$.

Approximating each $X_j$ by a sequence $\{L_{j,m}\}_m\subset\Sc^2$ in the $\Sc^p$-norm and passing to the limit in
\begin{align*}
&\big(T_{f^{[n-1]}}^{A_1,\ldots,A_{i-1},A,A_i,\ldots,A_{n-1}}
-T_{f^{[n-1]}}^{A_1,\ldots,A_{i-1},B,A_i,\ldots,A_{n-1}}\big)(L_{1,m},\ldots,L_{n-1,m})\\
&=T_{f^{[n]}}^{A_1,\ldots,A_{i-1},A,B,A_i,\ldots,A_{n-1}}(L_{1,m},\ldots,L_{i-1,m},A-B,L_{i,m},\ldots,L_{n-1,m})
\end{align*}
as $m\rightarrow\infty$ with use of the estimate in Theorem \ref{ThPSS} completes the proof of \eqref{l2*} in the full generality.
\end{proof}

A useful straightforward consequence of Lemma \ref{L1} is stated below.

We will frequently use the notation ${\bf A}_i=\underbrace{A,\ldots,A}_i$ for any operator $A$.

\begin{lemma}
\label{LMOId2}
Let $1<p<\infty$, $n\in\N$, $f\in C^n(\R)$, $f^{(n-1)},f^{(n)}\in C_b(\R)$.
Let $A,B$ be self-adjoint
operators with $A-B \in\Sc^p$, let $X_1,\ldots,X_{n-1}\in\Sc^p$.
Then,
\begin{align*}
&T_{f^{[n-1]}}^{A,\ldots,A}(X_1,\ldots,X_{n-1})-T_{f^{[n-1]}}^{B,\ldots,B}(X_1,\ldots,X_{n-1})\\
&=\sum_{i=1}^n T_{f^{[n]}}^{{\bf B}_{i-1},A,B,{\bf A}_{n-i}}(X_1,\ldots,X_{i-1},A-B,X_i,\ldots,X_{n-1}).
\end{align*}
\end{lemma}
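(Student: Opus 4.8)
The statement to prove is Lemma~\ref{LMOId2}, which expresses the difference $T_{f^{[n-1]}}^{A,\ldots,A}(X_1,\ldots,X_{n-1})-T_{f^{[n-1]}}^{B,\ldots,B}(X_1,\ldots,X_{n-1})$ as a telescoping sum of $n$ multiple operator integrals of one higher order. The plan is to obtain this by a telescoping argument in which we swap one copy of $B$ for one copy of $A$ at a time in the list of generating operators of $T_{f^{[n-1]}}$, and to convert each individual swap into a term of the desired form by an application of Lemma~\ref{L1}. First I would note that all multiple operator integrals appearing in the statement are well defined bounded multilinear maps on $\Sc^p\times\cdots\times\Sc^p$ by Theorem~\ref{ThPSS}, since $f\in C^n(\R)$ with $f^{(n-1)},f^{(n)}\in C_b(\R)$, and that $A-B\in\Sc^p$ is exactly the hypothesis needed to invoke Lemma~\ref{L1}.

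The core step is the telescope
\begin{align*}
T_{f^{[n-1]}}^{{\bf A}_n}(X_1,\ldots,X_{n-1})-T_{f^{[n-1]}}^{{\bf B}_n}(X_1,\ldots,X_{n-1})
=\sum_{i=1}^{n}\Big(T_{f^{[n-1]}}^{{\bf B}_{i-1},A,{\bf A}_{n-i}}-T_{f^{[n-1]}}^{{\bf B}_{i},{\bf A}_{n-i}}\Big)(X_1,\ldots,X_{n-1}),
\end{align*}
where in the $i$th summand the first $i-1$ slots are $B$, the $i$th slot changes from $A$ to $B$, and the last $n-i$ slots are $A$; note ${\bf B}_0$ and ${\bf A}_0$ are empty, so the $i=1$ term has first argument list $A,{\bf A}_{n-1}={\bf A}_n$ versus $B,{\bf A}_{n-1}$, and the $i=n$ term has ${\bf B}_{n-1},A$ versus ${\bf B}_n$; consecutive terms cancel and the sum collapses to the left-hand side. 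Then, for each fixed $i$, I would apply Lemma~\ref{L1} with the roles of its operators assigned as follows: the ambient list $A_1,\ldots,A_{n-1}$ of Lemma~\ref{L1} is taken to be $\underbrace{B,\ldots,B}_{i-1},\underbrace{A,\ldots,A}_{n-i}$, the operator ``$A$'' of Lemma~\ref{L1} is our $A$, the operator ``$B$'' of Lemma~\ref{L1} is our $B$, and the insertion index is our $i$. This yields exactly
\begin{align*}
\Big(T_{f^{[n-1]}}^{{\bf B}_{i-1},A,{\bf A}_{n-i}}-T_{f^{[n-1]}}^{{\bf B}_{i-1},B,{\bf A}_{n-i}}\Big)(X_1,\ldots,X_{n-1})
=T_{f^{[n]}}^{{\bf B}_{i-1},A,B,{\bf A}_{n-i}}(X_1,\ldots,X_{i-1},A-B,X_i,\ldots,X_{n-1}),
\end{align*}
and summing over $i=1,\ldots,n$ gives the claimed identity.

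The main thing to get right is the bookkeeping of indices: one must check that the list of generating operators on each side of Lemma~\ref{L1} matches the pattern $({\bf B}_{i-1},\,\cdot\,,{\bf A}_{n-i})$ with the changed slot in position $i$, and that the perturbation $A-B$ is inserted into the $X$-arguments in the correct position $i$ (shifting $X_i,\ldots,X_{n-1}$ one place to the right). There is no analytic obstacle here — Lemma~\ref{L1} already carried all the approximation work needed to handle $p>2$ and general $X_j\in\Sc^p$ — so the proof is essentially a matter of assembling the telescope and citing Lemma~\ref{L1} term by term; indeed this is why the statement is flagged as ``a useful straightforward consequence.'' I would therefore keep the write-up short: display the telescope, invoke Lemma~\ref{L1} for the generic term, and sum.
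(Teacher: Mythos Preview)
Your proposal is correct and follows exactly the paper's approach: write the difference as the telescoping sum $\sum_{i=1}^n\bigl(T_{f^{[n-1]}}^{{\bf B}_{i-1},A,{\bf A}_{n-i}}-T_{f^{[n-1]}}^{{\bf B}_i,{\bf A}_{n-i}}\bigr)(X_1,\ldots,X_{n-1})$ and apply Lemma~\ref{L1} to each summand. The bookkeeping you spell out is precisely what the paper leaves implicit.
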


\begin{proof}
We have
\begin{align*}
\big(T_{f^{[n-1]}}^{A,\ldots,A}-T_{f^{[n-1]}}^{B,\ldots,B}\big)(X_1,\ldots,X_{n-1})
=\sum_{i=1}^n\big(T_{f^{[n-1]}}^{{\bf B}_{i-1},
{\bf A}_{n-i-1}}-T_{f^{[n-1]}}^{{\bf B}_i,{\bf A}_{n-i}}\big)(X_1,\ldots,X_{n-1}),
\end{align*}
which along with Lemma \ref{L1} implies the result.
\end{proof}

\begin{proof}[Proof of Theorem \ref{C(n+1)}]
We prove this theorem by induction on $n$. The base of induction $n=1$ and the
induction step can be proved in a completely analogous way, so we demonstrate the latter and omit the former.

We show below that if the result holds for $n=k-1$, then it also holds for $n=k$.
Consider $X_1,\ldots,X_{k-1}\in \Sc^p$ and $X_k\in\Sc^{p}_{sa}$. Since the result holds for $k-1$, we have
\begin{align}
\label{mpii0}
\nonumber
S_1&:=D_p^{k-1}f(A+X_k)(X_1,\ldots,X_{k-1})-D_p^{k-1}f(A)(X_1,\ldots,X_{k-1})\\
&=\sum_{\tau\in \sym_{k-1}}\big(T_{f^{[k-1]}}^{A+X_k,\ldots,A+X_k}
-T_{f^{[k-1]}}^{A,\ldots,A}\big)(X_{\tau(1)},\ldots,X_{\tau(k-1)}).
\end{align}
By Lemma \ref{LMOId2}, for every $\tau\in \sym_{k-1}$,
\begin{align}
\label{mpii-1}
S_1=\sum_{\tau\in \sym_{k-1}}\sum_{i=1}^k T_{f^{[k]}}^{{\bf A}_{i-1},A+X_k,A,{\bf (A+X_k)}_{k-i}}(X_{\tau(1)},\ldots,X_{\tau(i-1)},X_k,X_{\tau(i)},\ldots,X_{\tau(k-1)}).
\end{align}
Note that
\begin{align}
\label{mpii-2}
\nonumber
S_2&:=\sum_{\sigma\in\sym_k}T_{f^{[k]}}^{A,\ldots,A}(X_{\sigma(1)},\ldots,X_{\sigma(k)})\\
&=\sum_{i=1}^k\sum_{\tau\in \sym_{k-1}} T_{f^{[k]}}^{A,\ldots,A}(X_{\tau(1)},\ldots,X_{\tau(i-1)},X_k,X_{\tau(i)},\ldots,X_{\tau(k-1)}).
\end{align}
Combining \eqref{mpii0}--\eqref{mpii-2} implies
\begin{align}
\label{mpii-3}
\|S_1-S_2\|_p\le\sum_{i=1}^k\sum_{\tau\in \sym_{k-1}} \big\|\big(&T_{f^{[k]}}^{{\bf A}_{i-1},A+X_k,A,{\bf (A+X_k)}_{k-i}}
-T_{f^{[k]}}^{A,\ldots,A}\big)\\
\nonumber
&(X_{\tau(1)},\ldots,X_{\tau(i-1)},X_k,X_{\tau(i)},\ldots,X_{\tau(k-1)})\big\|_p.
\end{align}
It follows from Lemma \ref{L1} that for every $i=1,\ldots,k$,
\begin{align}
\label{mpii-4}
\nonumber
&\big(T_{f^{[k]}}^{{\bf A}_{i-1},A+X_k,A,{\bf (A+X_k)}_{k-i}}-T_{f^{[k]}}^{A,\ldots,A}\big)
(X_{\tau(1)},\ldots,X_{\tau(i-1)},X_k,X_{\tau(i)},\ldots,X_{\tau(k-1)})\\
&=T_{f^{[k+1]}}^{{\bf A}_{i-1},A+X_k,A,A,{\bf (A+X_k)}_{k-i}}
(X_{\tau(1)},\ldots,X_{\tau(i-1)},X_k,X_k,X_{\tau(i)},\ldots,X_{\tau(k-1)})\\
\nonumber
&\quad+\big(T_{f^{[k]}}^{{\bf A}_{i+1},{\bf (A+X_k)}_{k-i}}-T_{f^{[k]}}^{A,\ldots,A}\big)
(X_{\tau(1)},\ldots,X_{\tau(i-1)},X_k,X_{\tau(i)},\ldots,X_{\tau(k-1)})\\
\nonumber
&=:S_3+S_4.
\end{align}
By a reasoning similar to the one in the proof of Lemma \ref{LMOId2},
\begin{align}
\label{mpii-5}
S_4=\sum_{j=i+2}^{k+1} &T_{f^{[k+1]}}^{{\bf A}_{j-1},A+X_k,A,{\bf (A+X_k)}_{k+1-j}}\\
\nonumber
&(X_{\tau(1)},\ldots,X_{\tau(i-1)},
X_k,X_{\tau(i)},\ldots,X_{\tau(j-1)},X_k,X_{\tau(j)},\ldots,X_{\tau(k-1)}).
\end{align}

Combining \eqref{mpii-4} and \eqref{mpii-5} and then applying Theorem \ref{ThPSS}
ensures that for every $i=1,\ldots,k$,
\begin{align}
\nonumber
&\big\|\big(T_{f^{[k]}}^{{\bf A}_{i-1},A+X_k,A,{\bf (A+X_k)}_{k-i}}-T_{f^{[k]}}^{A,\ldots,A}\big)
(X_{\tau(1)},\ldots,X_{\tau(i-1)},X_k,X_{\tau(i)},\ldots,X_{\tau(k-1)})\big\|_p\\
&\le  k\, c_{p,k+1}\,\|f^{(k+1)}\|_\infty\|X_k\|_{(k+1)p}^2\,\|X_1\|_{(k+1)p}\cdots\|X_{k-1}\|_{(k+1)p}.
\end{align}
Combining the latter with \eqref{mpii-3} implies
\begin{align}
\label{mpii-6}
\nonumber
\bigg\|&D_p^{k-1}f(A+X_k)(X_1,\ldots,X_{k-1})-D_p^{k-1}f(A)(X_1,\ldots,X_{k-1})\\
&-\sum_{\sigma\in \sym_k}T_{f^{[k]}}^{A,\ldots,A}(X_{\sigma(1)},\ldots,X_{\sigma(k)})\bigg\|_p
=o(\|X_k\|_{(k+1)p})\,\|X_1\|_{(k+1)p}\cdots\|X_{k-1}\|_{(k+1)p}
\end{align}
as $\|X_k\|_{(k+1)p}\rightarrow 0$.
Hence, $f$ is $k$ times Fr\'{e}chet $\Sc^p$-differentiable at $A$ and \eqref{*2n+1} holds.
By the principal of mathematical induction we obtain that $f$ satisfies Remark \ref{Sharp}(i), that
$f$ is $n$ times Fr\'{e}chet $\Sc^p$-differentiable at $A$
and that \eqref{*2n+1} holds for every $k=1,\ldots,n$.

To prove Remark \ref{Sharp}(ii), and hence the continuity property (\ref{4n+1}),
we apply \eqref{*2n+1} with $k=n$, Lemma \ref{LMOId2}, and Theorem \ref{ThPSS}.
For any $X_1,\ldots, X_n\in\Sc^p$ and for small $X\in\Sc^{p}_{sa}$, we have
\begin{align}
\label{forp18}
\nonumber
&\big\|D_p^n f(A+X)(X_1,\ldots,X_n)-D_p^n f(A)(X_1,\ldots,X_n)\big\|_p\\
&=\bigg\|\sum_{\sigma\in \sym_n}\big(T_{f^{[n]}}^{A+X,\ldots,A+X}-T_{f^{[n]}}^{A,\ldots,A}\big)
(X_{\sigma(1)},\ldots,X_{\sigma(n)})\bigg\|_p\\
\nonumber
&=\bigg\|\sum_{\sigma\in \sym_n}\sum_{i=1}^{n+1} T_{f^{[n+1]}}^{{\bf A}_{i-1},A+X,A,
{\bf (A+X)}_{n-i}}
(X_{\sigma(1)},\ldots,X_{\sigma(i-1)},X,X_{\sigma(i)},\ldots,X_{\sigma(n)})\bigg\|_p\\
\nonumber
&\le (n+1)\,n!\,c_{p,n+1}\,\|f^{(n+1)}\|_\infty\|X\|_{(n+1)p}
\|X_1\|_{(n+1)p}\cdots\|X_n\|_{(n+1)p},
\end{align}
which yields the result.
\end{proof}

The following lemma on continuity of a multiple operator integral is
the last auxiliary result needed to prove Theorem \ref{C0}.

\begin{lemma}
\label{L2}
Let $1<p<\infty$, $n\in\N$, $1\le i\le n$. Let $f\in C^n(\R)$, $f^{(n)}\in C_0(\R)$. Then, for every $\epsilon>0$
there exists $\delta>0$ such that for all self-adjoint $A,A_1,\ldots,A_n$ and all
$X_1,\ldots,X_n\in\Sc^p$ the estimate
\begin{align}
\label{L2ineq}
\nonumber
&\big\|T_{f^{[n]}}^{A_1,\ldots,A_{i-1},A+X,A_i,\ldots,A_n}(X_1,\ldots,X_n)
-T_{f^{[n]}}^{A_1,\ldots,A_{i-1},A,A_i,\ldots,A_n}(X_1,\ldots,X_n)\big\|_{p}\\
&\le\epsilon\,\|X_1\|_{np}\cdots\|X_n\|_{np}
\end{align}
holds whenever $X\in\Sc^{p}_{sa}$ satisfies $\|X\|_{(n+1)p}<\delta$.
\end{lemma}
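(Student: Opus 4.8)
The plan is to reduce to the case of a $C^{n+1}$ symbol, where Lemma \ref{L1} applies directly, by splitting $f$ into a smooth part and a remainder whose $n$th derivative has arbitrarily small sup norm. Fix $\epsilon>0$. Since $f^{(n)}\in C_0(\R)$ it is bounded and uniformly continuous, so convolving it with a standard mollifier $\rho_\eta\in C_c^\infty(\R)$ produces $\phi:=f^{(n)}\ast\rho_\eta\in C^\infty(\R)$ with $\phi$ and $\phi'$ bounded and with $\norm{f^{(n)}-\phi}_\infty$ as small as we wish (controlled by $\eta$). I would then take $f_2\in C^{n+1}(\R)$ to be an $n$-fold antiderivative of $\phi$, so that $f_2^{(n)}=\phi$ and $f_2^{(n+1)}=\phi'$ are bounded, and set $f_1:=f-f_2\in C^n(\R)$, so that $f_1^{(n)}=f^{(n)}-\phi\in C_b(\R)$ with $\norm{f_1^{(n)}}_\infty$ governed by $\eta$. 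By Theorem \ref{ThPSS} all the multiple operator integrals below, for $f$, $f_1$, and $f_2$, are well defined bounded multilinear maps, and since divided differences and the partial sums $S_{\varphi,r}$ of Definition \ref{MOIPSS} are linear in the symbol, one has $T_{f^{[n]}}=T_{f_1^{[n]}}+T_{f_2^{[n]}}$ for every choice of the operator tuple.

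For the $f_1$ part I would estimate the two summands separately by Theorem \ref{ThPSS}\eqref{ThPSSi}:
\[
\bignorm{T_{f_1^{[n]}}^{A_1,\dots,A_{i-1},A+X,A_i,\dots,A_n}(X_1,\dots,X_n)}_p+\bignorm{T_{f_1^{[n]}}^{A_1,\dots,A_{i-1},A,A_i,\dots,A_n}(X_1,\dots,X_n)}_p\le 2c_{p,n}\norm{f_1^{(n)}}_\infty\norm{X_1}_{np}\cdots\norm{X_n}_{np},
\]
which is $\le\frac{\epsilon}{2}\norm{X_1}_{np}\cdots\norm{X_n}_{np}$ once $\eta$ (equivalently, $f_2$) is fixed small enough; crucially this choice depends only on $p,n,\epsilon$, not on the operators. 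For the $f_2$ part I would invoke Lemma \ref{L1} with its ``$n$'' replaced by $n+1$, its operators $A_1,\dots,A_{n-1}$ replaced by $A_1,\dots,A_n$, and its pair $(A,B)$ replaced by $(A+X,A)$ (note $B-A=-X\in\Sc^p$, so the hypothesis is met), which gives
\[
T_{f_2^{[n]}}^{A_1,\dots,A_{i-1},A+X,A_i,\dots,A_n}(X_1,\dots,X_n)-T_{f_2^{[n]}}^{A_1,\dots,A_{i-1},A,A_i,\dots,A_n}(X_1,\dots,X_n)=T_{f_2^{[n+1]}}^{A_1,\dots,A_{i-1},A+X,A,A_i,\dots,A_n}(X_1,\dots,X_{i-1},X,X_i,\dots,X_n).
\]
Applying Theorem \ref{ThPSS}\eqref{ThPSSi} in order $n+1$ and using $\norm{\,\cdotp}_{(n+1)p}\le\norm{\,\cdotp}_{np}$ bounds the right-hand side by $c_{p,n+1}\norm{f_2^{(n+1)}}_\infty\norm{X}_{(n+1)p}\norm{X_1}_{np}\cdots\norm{X_n}_{np}$.

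Since $\norm{f_2^{(n+1)}}_\infty=\norm{\phi'}_\infty$ has already been fixed by the choice of $\eta$, I would now choose $\delta>0$ with $c_{p,n+1}\norm{\phi'}_\infty\,\delta\le\frac{\epsilon}{2}$, so that the $f_2$ contribution is $\le\frac{\epsilon}{2}\norm{X_1}_{np}\cdots\norm{X_n}_{np}$ whenever $X\in\Sc^p_{sa}$ satisfies $\norm{X}_{(n+1)p}<\delta$. Adding the two estimates and using $T_{f^{[n]}}=T_{f_1^{[n]}}+T_{f_2^{[n]}}$ yields \eqref{L2ineq}. The one point that genuinely needs care — and the only obstacle, since $f$ is not assumed to lie in $C^{n+1}(\R)$ so Lemma \ref{L1} cannot be applied to $f$ itself — is the order of the quantifiers: $\eta$ (hence $f_2$, hence $\norm{\phi'}_\infty$) must be selected first, and only then $\delta$; all constants produced by Theorem \ref{ThPSS} and Lemma \ref{L1} are independent of the self-adjoint operators $A,A_1,\dots,A_n$, so the resulting $\delta$ is uniform as required.
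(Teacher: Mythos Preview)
Your proof is correct and follows essentially the same approach as the paper's: split $f=f_1+f_2$ with $\norm{f_1^{(n)}}_\infty$ small (handled by two applications of Theorem \ref{ThPSS}) and $f_2\in C^{n+1}(\R)$ with $f_2^{(n)},f_2^{(n+1)}\in C_b(\R)$ (handled by Lemma \ref{L1} at order $n+1$ followed by Theorem \ref{ThPSS}), then fix the approximation first and choose $\delta$ afterwards. The only cosmetic difference is that the paper produces $f_\epsilon$ with $f_\epsilon^{(n)}\in C_c^1(\R)$, whereas you mollify $f^{(n)}$ directly; both constructions yield exactly the hypotheses needed for Lemma \ref{L1}.
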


\begin{proof}
In this proof we adopt the notations \eqref{notation1} and \eqref{notation2}.

Assume first that $f^{(n)}\in C_c^1(\R)$, so that $f^{(n+1)}\in C_b(\R)$. By Lemma \ref{L1},
\begin{align}
\label{L1cor}
\T_{f^{[n]}}^{{\bf A},i,A+X}-\T_{f^{[n]}}^{{\bf A},i,A}
=\T_{f^{[n+1]}}^{{\bf A},i,A+X,A}(X).
\end{align}
By the inequality $\|\cdot\|_{(n+1)p}\le\|\cdot\|_{np}$,
representation (\ref{L1cor}) and Theorem \ref{ThPSS},
\begin{align}
\label{L21}
\nonumber
&\big\|\T_{f^{[n]}}^{{\bf A},i,A+X}-\T_{f^{[n]}}^{{\bf A},i,A}\big\|_{p}\\
\nonumber
&\le c_{p,n+1}\,\|f^{(n+1)}\|_\infty\|X\|_{(n+1)p}\|X_1\|_{(n+1)p}\cdots\|X_n\|_{(n+1)p}\\
&\le c_{p,n+1}\,\|f^{(n+1)}\|_\infty\|X\|_{(n+1)p}\|X_1\|_{np}\cdots\|X_n\|_{np}.
\end{align}

Assume now that $f\in C^n(\R)$, $f^{(n)}\in C_0(\R)$.
Given $\epsilon>0$, there exists $f_\epsilon\in C^{n+1}(\R)$
such that $f_\epsilon^{(n)}\in C_c^1(\R)$ and
\begin{align}
\label{f-fepsilon}
\|f^{(n)}-f_\epsilon^{(n)}\|_\infty<\frac{\epsilon}{4\,c_{p,n}}.
\end{align}
We have
\begin{align}
\label{L22}
\big\|\T_{f^{[n]}}^{{\bf A},i,A+X}-\T_{f^{[n]}}^{{\bf A},i,A}\big\|_{p}
\le\big\|T_{(f-f_\epsilon)^{[n]}}^{{\bf A},i,A+X}\big\|_{p}
+\big\|T_{(f-f_\epsilon)^{[n]}}^{{\bf A},i,A}\big\|_{p}
+\big\|T_{f_\epsilon^{[n]}}^{{\bf A},i,A+X}
-T_{f_\epsilon^{[n]}}^{{\bf A},i,A}\big\|_{p}.
\end{align}
By Theorem \ref{ThPSS} and \eqref{f-fepsilon},
\begin{align}
\label{L23}
\nonumber
\big\|T_{(f-f_\epsilon)^{[n]}}^{{\bf A},i,A+X}\big\|_{p}
+\big\|T_{(f-f_\epsilon)^{[n]}}^{{\bf A},i,A}\big\|_{p}
&\le 2\, c_{p,n}\,\|f^{(n)}-f_\epsilon^{(n)}\|_\infty\|X_1\|_{np}\cdots\|X_n\|_{np}\\
&\le\frac12\,\epsilon\,\|X_1\|_{np}\cdots\|X_n\|_{np}.
\end{align}
Combining \eqref{L21} for $f=f_\epsilon$, \eqref{L22}, \eqref{L23} guarantees that if
\begin{align*}
\|X\|_{(n+1)p}<\delta=\frac{\epsilon}{2\,c_{p,n+1}\,\|f_\epsilon^{(n+1)}\|_\infty}\,,
\end{align*}
then \eqref{L2ineq} holds.
\end{proof}


\begin{proof}[Proof of Theorem \ref{C0}]
It follows from Theorem \ref{Thmiin+1} that $f$ is at least $n-1$ times Fr\'{e}chet $\Sc^p$-differentiable
at every $A=A^*$ and \eqref{*2} holds for $k=1,\ldots,n-1$.

Our goal is to show that the statement of Remark \ref{Sharp}(i) holds, that is
\begin{align}
\nonumber
\label{mpii1pn}
\bigg\|&\big(D_p^{n-1}f(A+X_n)-D_p^{n-1}f(A)\big)(X_1,\ldots,X_{n-1})
-\sum_{\sigma\in\sym_n}T_{f^{[n]}}^{A,\ldots,A}(X_{\sigma(1)},\ldots,X_{\sigma(n)})\bigg\|_p\\
&=o(\|X_n\|_{np})\,\|X_1\|_{np}\cdots\|X_{n-1}\|_{np}
\end{align}
as $\|X_{n}\|_{(n+1)p}\rightarrow 0$, $X_n\in\Sc^{p}_{sa}$,
for all $X_1,\ldots, X_{n-1}\in\Sc^p$.

Combining \eqref{mpii0} and \eqref{mpii-1} for $k=n$ gives
\begin{align}
\label{mpii4}
\nonumber
&\big(D_p^{n-1}f(A+X_n)-D_p^{n-1}f(A)\big)(X_1,\ldots,X_{n-1})-\sum_{\sigma\in\sym_n}T_{f^{[n]}}^{A,\ldots,A}(X_{\sigma(1)},\ldots,X_{\sigma(n)})\\
&=\sum_{\tau\in\sym_{n-1}}\sum_{i=1}^n\big(T_{f^{[n]}}^{{\bf A}_{i-1},A+X_n,A,{\bf (A+X_n)}_{n-i}}-T_{f^{[n]}}^{A,\ldots,A}\big)\\
\nonumber
&\quad\quad\quad(X_{\tau(1)},\ldots,X_{\tau(i-1)},X_n,X_{\tau(i)},\ldots,X_{\tau(n-1)}).
\end{align}
Since
\begin{align*}
T_{f^{[n]}}^{{\bf A}_{i-1},A+X_n,A,{\bf (A+X_n)}_{n-i}}-T_{f^{[n]}}^{A,\ldots,A}&=T_{f^{[n]}}^{{\bf A}_{i-1},A+X_n,A,{\bf (A+X_n)}_{n-i}}-T_{f^{[n]}}^{{\bf A}_{i+1},{\bf (A+X_n)}_{n-i}}\\
&\quad+\sum_{j=i+1}^n \big(T_{f^{[n]}}^{{\bf A}_j,{\bf (A+X_n)}_{n+1-j}}-T_{f^{[n]}}^{{\bf A}_{j+1},{\bf (A+X_n)}_{n-j}}\big)
\end{align*}
for every $i=1,\ldots,n$, it follows from \eqref{mpii4} and Lemma \ref{L2} that \eqref{mpii1pn} holds.

To prove the continuity property stated in Remark \ref{Sharp}(ii)
(and hence to prove (\ref{4n+1})), we recall \eqref{forp18} and note
\begin{align*}
\eqref{forp18}
=\Big\|\sum_{\sigma\in\sym_n}\sum_{i=1}^{n+1}
\big(T_{f^{[n]}}^{{\bf A}_{i-1},{\bf (A+X)}_{n+2-i}}-T_{f^{[n]}}^{{\bf A}_i,
{\bf (A+X)}_{n+1-i}}\big)(X_{\sigma(1)},\ldots,X_{\sigma(n)})\Big\|_p.
\end{align*}
Then by Lemma \ref{L2},
$$
\Bignorm{\big(D_p^n f(A+X)-D_p^n f(A)\big)(X_1,\ldots,X_n)}_p
=o(1)\norm{X_1}_{np}\cdots\norm{X_n}_{np}
$$
as $\norm{X}_{(n+1)p}\to 0$, $X\in\Sc^{p}_{sa}$, for all $X_1,\ldots,X_n\in\Sc^p$, yielding the result.
\end{proof}

\begin{proof}[Proof of Theorem \ref{Cor1}]
The necessary condition is proved in Proposition \ref{nc}.
For any $f\in C^n(\R)$ and for any bounded interval $I\subset\R$, there exists
a function $\widetilde{f}\in C^n(\R)$ with compact support such that $f$ and
$\widetilde{f}$ coincide on $I$. Since the definition of an operator
$f(A+X)-f(A)$ only depends on the restrictions of $f$ to the spectra of $A$ and $A+X$,
the sufficient condition is an immediate consequence of Theorem \ref{C0}.
\end{proof}

\begin{proof}[Proof of Theorem \ref{Thmi}]
The fact that $f$ is $(n-1)$ times continuously Fr\'echet differentiable
follows from Theorem \ref{C(n+1)}.
The G\^{a}teaux differentiability in the case $n=1$ is proved in \cite[Theorem 7.18]{KPSS}.

Furthermore, (ii) follows from (i) applied with $X_1=\cdots=X_{n-1}=X_n=X\in\Sc^{p}_{sa}$.
Hence it suffices to establish (i).

Let $n>1$. We fix $X_1,\ldots,X_{n-1}\in\Sc^p$ and $X_n\in\Sc^{p}_{sa}$ and we let $\epsilon>0$.
Let $A_m,X_{1,m},\ldots,X_{n,m}$ be as in Lemma \ref{LAm}. By Theorem \ref{Cor1}
and Remark \ref{Sharp}, $f$ is $n$ times Fr\'{e}chet $\Sc^p$-differentiable at $A_m$ and  given $m\in\N$, there exists
$\delta_{m,\epsilon}>0$ such that $|t|<\delta_{m,\epsilon}$ implies
\begin{align}
\label{l3p1}
\nonumber
&\bigg\|\big(D_p^{n-1}f(A_m+tX_{n,m})-D_p^{n-1}f(A_m)\big)(X_{1,m},\ldots,X_{n-1,m})\\
&\quad-t\sum_{\sigma\in\sym_n}
T_{f^{[n]}}^{A_m,\ldots,A_m}(X_{\sigma(1),m},\ldots,X_{\sigma(n),m})\bigg\|_{p}
\le\frac{\epsilon}{4}\, |t|\,\|X_{1,m}\|_{np}\cdots \|X_{n-1,m}\|_{np}.
\end{align}
We have
\begin{align}
\label{l3p2}
\nonumber
&\big(D_p^{n-1}f(A+tX_n)-D_p^{n-1}f(A)\big)(X_1,\ldots,X_{n-1})-t\sum_{\sigma\in\sym_n}
T_{f^{[n]}}^{A,\ldots,A}(X_{\sigma(1)},\ldots,X_{\sigma(n)})\\
&=\big(D_p^{n-1}f(A_m+tX_{n,m})-D_p^{n-1}f(A_m)\big)(X_{1,m},\ldots,X_{n-1,m})\\
\nonumber
&\quad-t\sum_{\sigma\in\sym_n}T_{f^{[n]}}^{A_m,\ldots,A_m}(X_{\sigma(1),m},\ldots,X_{\sigma(n),m})\\
\nonumber
&+D_p^{n-1}f(A+tX_n)(X_1,\ldots,X_{n-1})-D_p^{n-1}f(A_m+tX_{n,m})(X_{1,m},\ldots,X_{n-1,m})\\
\nonumber
&+D_p^{n-1}f(A_m)(X_{1,m},\ldots,X_{n-1,m})-D_p^{n-1}f(A)(X_1,\ldots,X_{n-1})\\
\nonumber
&+t\sum_{\sigma\in\sym_n}\big(T_{f^{[n]}}^{A_m,\ldots,A_m}(X_{\sigma(1),m},\ldots,X_{\sigma(n),m})
-T_{f^{[n]}}^{A,\ldots,A}(X_{\sigma(1)},\ldots,X_{\sigma(n)})\big)\\
\nonumber
&=:[1]-[2]+[3]+[4]+[5].
\end{align}
The first two lines $[1]-[2]$ are treated by (\ref{l3p1}).
By (\ref{*2n+1}) and Lemma \ref{LAm},
\begin{align*}
D_p^{n-1}f(A_m+tX_{n,m})(X_{1,m},\ldots,X_{n-1,m})=D_p^{n-1}f(A+tX_{n,m})(X_{1,m},\ldots,X_{n-1,m}).
\end{align*}
Hence,
\begin{align}
\label{l3p3}
[3]&=D_p^{n-1}f(A+tX_n)(X_1,\ldots,X_{n-1})-D_p^{n-1}f(A+tX_{n,m})(X_1,\ldots,X_{n-1})\\
\nonumber
&+D_p^{n-1}f(A+tX_{n,m})(X_1,\ldots,X_{n-1})-D_p^{n-1}f(A+tX_{n,m})(X_{1,m},\ldots,X_{n-1,m}).
\end{align}
By (\ref{*2n+1}) and Lemma \ref{LMOId2}, the first difference in \eqref{l3p3} equals
\begin{align}
\label{l3p4}
&t\sum_{\sigma\in\sym_{n-1}}\sum_{j=1}^n
T_{f^{[n]}}^{{\bf (A+tX_{n,m})}_{j-1},A+tX_n,A+tX_{n,m},{\bf (A+tX_n)}_{n-j}}\\
\nonumber
&\quad\quad (X_{\sigma(1)},\ldots,X_{\sigma(j-1)},X_n-X_{n,m},X_{\sigma(j)},\ldots,X_{\sigma(n-1)}).
\end{align}
Likewise, the second difference in \eqref{l3p3} equals
\begin{align}
\label{l3p5}
&\sum_{\sigma\in\sym_{n-1}}\sum_{i=1}^{n-1}
T_{f^{[n-1]}}^{A+tX_{n,m},\ldots,A+tX_{n,m}}\\
\nonumber
&\quad\quad (X_{\sigma(1),m},\ldots,X_{\sigma(i-1),m},
X_{\sigma(i)}-X_{\sigma(i),m},X_{\sigma(i+1)},\ldots,X_{\sigma(n-1)}).
\end{align}
By letting $t=0$ in \eqref{l3p3}--\eqref{l3p5} we obtain 
\begin{align}
\label{l3p6}
&[4]\\
\nonumber
&=-\sum_{\sigma\in\sym_{n-1}}\sum_{i=1}^{n-1}
T_{f^{[n-1]}}^{A,\ldots,A}(X_{\sigma(1),m},\ldots,X_{\sigma(i-1),m},X_{\sigma(i)}-X_{\sigma(i),m},X_{\sigma(i+1)},\ldots,X_{\sigma(n-1)}).
\end{align}
Combining \eqref{l3p3}--\eqref{l3p6} implies
\begin{align}
\label{l3p7}
[3]+[4]&=t\sum_{\sigma\in\sym_{n-1}}\sum_{j=1}^n
T_{f^{[n]}}^{{\bf (A+tX_{n,m})}_{j-1},A+tX_n,A+tX_{n,m},{\bf (A+tX_n)}_{n-j}}\\
\nonumber
&\quad\quad (X_{\sigma(1)},\ldots,X_{\sigma(j-1)},X_n-X_{n,m},X_{\sigma(j)},\ldots,X_{\sigma(n-1)})\\
\nonumber
&\quad+\sum_{\sigma\in\sym_{n-1}}\sum_{i=1}^{n-1}
\big(T_{f^{[n-1]}}^{A+tX_{n,m},\ldots,A+tX_{n,m}}-T_{f^{[n-1]}}^{A,\ldots,A}\big)\\
\nonumber
&\quad\quad (X_{\sigma(1),m},\ldots,X_{\sigma(i-1),m},X_{\sigma(i)}
-X_{\sigma(i),m},X_{\sigma(i+1)},\ldots,X_{\sigma(n-1)})\\
\nonumber
&=:[6]+[7].
\end{align}
By Theorem \ref{ThPSS}\eqref{ThPSSi}, 
\begin{align}
\label{l3p8}
\big\|[6]\big\|_{p}
\le |t|\,n!\,c_{p,n}\,\|X_n-X_{n,m}\|_{np}\|X_1\|_{np}\cdots\|X_{n-1}\|_{np}.
\end{align}
To treat $[7]$,
we note that by Lemma \ref{LMOId2}, for every $\sigma\in\sym_{n-1}$ and every $i=1,\ldots,n-1$,
\begin{align*}
&\big(T_{f^{[n-1]}}^{A+tX_{n,m},\ldots,A+tX_{n,m}}-T_{f^{[n-1]}}^{A,\ldots,A}\big)\\
&\quad\quad (X_{\sigma(1),m},\ldots,X_{\sigma(i-1),m},X_{\sigma(i)}-X_{\sigma(i),m},X_{\sigma(i+1)},\ldots,X_{\sigma(n-1)})\\
&=\sum_{j=1}^n T_{f^{[n]}}^{{\bf A}_{j-1},A+tX_{n,m},A,{\bf (A+tX_{n,m})}_{n-j}}(Y_{i,j,1},\ldots,Y_{i,j,n}),
\end{align*}
where, for any $j=1,\ldots,n$, $(Y_{i,j,1},\ldots, Y_{i,j,n})$ is a permutation of the $n$-tuple
$$
\bigl(X_{\sigma(1),m},\ldots, X_{\sigma(i-1),m},
X_{\sigma(i)}-X_{\sigma(i),m},X_{\sigma(i+1)},\ldots, X_{\sigma(n-1)},tX_{n,m}\bigr).
$$

Hence, by Theorem \ref{ThPSS}\eqref{ThPSSi} and by $\|X_{r,m}\|_{np}\le\|X_r\|_{np}$,
\begin{align}
\label{l3p9}
\big\|[7]\big\|_p\le|t|\,c_{p,n}\,\sum_{i=1}^{n-1}\|X_i-X_{i,m}\|_{np}
\prod_{r\in\{1,\ldots,n\}\setminus\{i\}}\|X_r\|_{np}.
\end{align}
Let $m\in\N$ be such that
\begin{align}
\label{mchoice}
\max_{1\le i\le n}\|X_i-X_{i,m}\|_p<\frac{\epsilon}{2n\,n!\,c_{p,n}}.
\end{align}
Combining \eqref{l3p7}--\eqref{l3p9} implies
\begin{align}
\label{l3p10}
\big\|[3]+[4]\big\|_p<\frac{\epsilon}{2}\,|t|\,\max_{1\le i\le n}\prod_{r\in\{1,\ldots,n\}\setminus\{i\}}\|X_r\|_{np}.
\end{align}
We now deal with $[5]$.
Applying Lemma \ref{LAm} gives
\begin{align*}
&T_{f^{[n]}}^{A_m,\ldots,A_m}(X_{\sigma(1),m},\ldots,X_{\sigma(n),m})
-T_{f^{[n]}}^{A,\ldots,A}(X_{\sigma(1)},\ldots,X_{\sigma(n)})\\
&=T_{f^{[n]}}^{A,\ldots,A}(X_{\sigma(1),m},\ldots,X_{\sigma(n),m})
-T_{f^{[n]}}^{A,\ldots,A}(X_{\sigma(1)},\ldots,X_{\sigma(n)})\\
&=\sum_{i=1}^n T_{f^{[n]}}^{A,\ldots,A}
(X_{\sigma(1)},\ldots,X_{\sigma(i-1)},X_{\sigma(i),m}-X_{\sigma(i)},X_{\sigma(i+1),m},\ldots,X_{\sigma(n),m}).
\end{align*}
for every $\sigma\in\sym_n$. Hence, by Theorem \ref{ThPSS}\eqref{ThPSSi},
\begin{align*}
\big\|[5]\big\|_{p}\le|t|\,n\,n!\,c_{np,n}\,\max_{1\le i\le n}\|X_i-X_{i,m}\|_{np}\,
\max_{1\le i\le n}\prod_{r\in\{1,\ldots,n\}\setminus\{i\}}\|X_r\|_{np}.
\end{align*}
By taking $m$ as in \eqref{mchoice}, we deduce from the latter that
\begin{align}
\label{l3p11}
\big\|[5]\big\|_{p}<\frac{\epsilon}{2}\,|t|\,\max_{1\le i\le n}\prod_{r\in\{1,\ldots,n\}\setminus\{i\}}\|X_r\|_{np}.
\end{align}
Let $\delta_{\epsilon,m}$ be chosen as in \eqref{l3p1}, where $m$ satisfies \eqref{mchoice}. It follows from
\eqref{l3p1}, \eqref{l3p2}, \eqref{l3p10}, \eqref{l3p11} that if $|t|<\delta_{\epsilon,m}$, then \eqref{l3p} holds.
\end{proof}

\begin{proof}[Proof of Theorem \ref{Remainders}]
Existence of the derivatives $D_{G,p}^k\, f(A)(X,\ldots,X)$ is justified by Theorem \ref{Thmi}.
By Lemma \ref{L1} and induction on $n$, we obtain
\begin{align}
R_{n,p,A,X,f}=\big(T_{f^{[n-1]}}^{A+X,A,\ldots,A}-T_{f^{[n-1]}}^{A,\ldots,A}\big)(X,\ldots,X)
=T_{f^{[n]}}^{A+X,A,\ldots,A}(X,\ldots,X).
\end{align}
The estimate \eqref{*5} follows from the latter representation and Theorem \ref{ThPSS}.
\end{proof}

\section{Differentiability in $\Sc^q$, $1\le q<\infty$}
\label{s4}

In this section we demonstrate that the functions proved to be G\^{a}teaux differentiable with respect to the operator norm in \cite{Peller2006} are continuously Fr\'{e}chet differentiable with respect to the  $\Sc^q$-norm, $1\le q<\infty$.

\begin{theorem}
\label{Th33Besov}
Let $n\in\N$ and $f\in B_{\infty1}^1(\R)\cap B_{\infty1}^n(\R)$. Then $f$ is $n$ times continuously Fr\'{e}chet
$\Sc^q$-differentiable, $1\le q<\infty$, at every $A=A^*$ and
\begin{align}
\label{**2}
D_q^k f(A)(X_1,\ldots,X_k)
=\sum_{\sigma\in \sym_k}T_{f^{[k]}}^{A,\ldots,A}(X_{\sigma(1)},\ldots,X_{\sigma(k)}),
\end{align}
for every $k=1,\ldots,n$, and all $X_1,\ldots,X_k\in \Sc^q$.
\end{theorem}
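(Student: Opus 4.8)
The plan is to run the same induction on the order of differentiability as in Section \ref{s3}, systematically replacing Theorem \ref{ThPSS} by Theorem \ref{Peller} (which, unlike Theorem \ref{ThPSS}, is available for $q=1$) and replacing the role played by the hypothesis $f^{(n)}\in C_0(\R)$, and by the approximation in Lemma \ref{L2}, by a band-limited approximation in the Besov norm. First I would record the preliminaries. If $f\in B_{\infty1}^1(\R)\cap B_{\infty1}^n(\R)$, then $f\in B_{\infty1}^k(\R)$ for every $1\le k\le n$, with $\|f\|_{B_{\infty1}^k}\le\|f\|_{B_{\infty1}^1}+\|f\|_{B_{\infty1}^n}$, since $2^{kj}\le 2^{nj}$ for $j\ge 0$ and $2^{kj}\le 2^{j}$ for $j<0$; moreover $f\in C^n(\R)$ and $f^{(k)}\in C_b(\R)$ for $1\le k\le n$, by a Landau--Kolmogorov interpolation inequality between $\|f'\|_\infty$ and $\|f^{(n)}\|_\infty$. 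Consequently, by Theorem \ref{Peller}, all the multiple operator integrals $T_{f^{[k]}}^{B_1,\dots,B_{k+1}}$ with $1\le k\le n$ are well defined and bounded $\Sc^q\times\cdots\times\Sc^q\to\Sc^q$, and by Proposition \ref{L}, \cite[Lemma 3.5]{PSS} and a density argument they coincide with the objects of Section \ref{s3}. Also, for $f\in B_{\infty1}^k(\R)\cap B_{\infty1}^{k-1}(\R)$ (in particular for band-limited $f$), the identities of Lemmas \ref{L1} and \ref{LMOId2} hold in $\Sc^q$ for every $1\le q<\infty$: for $1<q<\infty$ these are Lemmas \ref{L1} and \ref{LMOId2} themselves, since $f\in C^k(\R)$ with $f^{(k-1)},f^{(k)}\in C_b(\R)$; for $q=1$ one restricts the $q=2$ identity to $\Sc^1\subset\Sc^2$, all the relevant multiple operator integrals being bounded on $\Sc^1$ by Theorem \ref{Peller}.

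Next I would prove the Besov analogue of Lemma \ref{L2}: for each $1\le k\le n$, each slot index $i$, and each $\epsilon>0$ there is $\delta>0$ such that estimate \eqref{L2ineq}, with order $k$ in place of $n$ and $\Sc^q$ in place of $\Sc^p$, holds for our $f$, for all self-adjoint $A,A_1,\dots,A_k$ and all $Y_1,\dots,Y_k\in\Sc^q$, whenever $X\in\Sc_{sa}^q$ with $\|X\|_q<\delta$ (the case of the changing slot being in last position reduces to the first-position case by the symmetry of the divided difference). To see this, fix $\eta>0$ and write $f=f_\eta+r_\eta$, where $f_\eta=f\ast V_N$ for a de la Vall\'ee-Poussin type kernel $V_N$, so that $f_\eta$ is band-limited, whence $f_\eta\in B_{\infty1}^m(\R)$ for every $m\ge1$, and $N=N(\eta)$ is chosen so large that $\|r_\eta\|_{B_{\infty1}^k}<\eta$; this is possible because $\|f-f\ast V_N\|_{B_{\infty1}^k}\to0$ as $N\to\infty$, $f$ lying in $B_{\infty1}^k(\R)$. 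By Theorem \ref{Peller}, each of the two multiple operator integrals formed from $r_\eta^{[k]}$ has $\Sc^q$-norm at most ${\rm const}_k\,\eta\,\|Y_1\|_q\cdots\|Y_k\|_q$, uniformly in the operators, while for the difference formed from $f_\eta^{[k]}$ the (extended) Lemma \ref{L1} gives a single multiple operator integral $T_{f_\eta^{[k+1]}}^{\,\cdot}(Y_1,\dots,Y_{i-1},X,Y_i,\dots,Y_k)$ of order $k+1$, which Theorem \ref{Peller}, applied with $f_\eta\in B_{\infty1}^{k+1}(\R)$, bounds by ${\rm const}_{k+1}\,\|f_\eta\|_{B_{\infty1}^{k+1}}\,\|X\|_q\,\|Y_1\|_q\cdots\|Y_k\|_q$. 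Choosing first $\eta$ so small that $2\,{\rm const}_k\,\eta<\epsilon/2$ and then $\delta=\epsilon\bigl(2\,{\rm const}_{k+1}\,\|f_\eta\|_{B_{\infty1}^{k+1}}\bigr)^{-1}$ yields the estimate.

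With this lemma in hand I would run the induction on $k$ from $1$ to $n$ exactly as in the proofs of Theorems \ref{C(n+1)} and \ref{C0}. For $k=1$, the Birman--Solomyak formula $f(A+X)-f(A)=T_{f^{[1]}}^{A+X,A}(X)$ together with the $k=1$ case of the lemma gives $\|f(A+X)-f(A)-T_{f^{[1]}}^{A,A}(X)\|_q=o(\|X\|_q)$, i.e. first order Fr\'echet $\Sc^q$-differentiability at every $A=A^*$ with $D_q^1f(A)(X)=T_{f^{[1]}}^{A,A}(X)$; telescoping $T_{f^{[1]}}^{A+X,A+X}-T_{f^{[1]}}^{A,A}$ into two single-slot changes and applying the lemma gives the continuity property (alternatively, the first order statement can be quoted from \cite{BS3}). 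For the step $k-1\to k$, with $2\le k\le n$, one uses that the order $k-1$ conclusion holds at every self-adjoint operator, hence in a $\Sc^q_{sa}$-neighborhood of $A$, writes as in \eqref{mpii0}--\eqref{mpii-3} with $\|\cdot\|_p$ replaced by $\|\cdot\|_q$ the quantity $\bigl(D_q^{k-1}f(A+X)-D_q^{k-1}f(A)\bigr)(Y_1,\dots,Y_{k-1})-\sum_{\sigma\in\sym_k}T_{f^{[k]}}^{A,\dots,A}(Y_{\sigma(1)},\dots,Y_{\sigma(k)})$ as a finite sum of terms $\bigl(T_{f^{[k]}}^{{\bf A}_{i-1},A+X,A,{\bf (A+X)}_{k-i}}-T_{f^{[k]}}^{A,\dots,A}\bigr)(\dots)$, telescopes each such term into single-slot changes as in the computation following \eqref{mpii4}, and bounds each single-slot change by the lemma; this yields the order $k$ Fr\'echet $\Sc^q$-differentiability at every $A=A^*$ and formula \eqref{**2} at order $k$, and --- applying the lemma once more to the telescoped form of $\sum_{\sigma\in\sym_k}\bigl(T_{f^{[k]}}^{A+X,\dots,A+X}-T_{f^{[k]}}^{A,\dots,A}\bigr)(Y_{\sigma(1)},\dots,Y_{\sigma(k)})$, as in \eqref{forp18} --- the continuity property \eqref{4n+1} at order $k$. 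Taking $k=n$ finishes the proof.

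The main obstacle is the second step, the Besov substitute for Lemma \ref{L2}. Two points require care. First, one must exhibit a band-limited $f_\eta$ that simultaneously lies in $B_{\infty1}^{k+1}(\R)$, so that Lemma \ref{L1} combined with Theorem \ref{Peller} produces a genuinely $O(\|X\|_q)$ bound, and leaves a remainder $r_\eta$ small in $B_{\infty1}^{k}(\R)$; here the polynomial/low-frequency ambiguity of the Littlewood--Paley decomposition has to be handled, which is harmless because a divided difference $f^{[k]}$, and hence the associated multiple operator integral, is unchanged when a polynomial of degree $<k$ is added to $f$. Second, one must make sure that Lemma \ref{L1}, applied to $f_\eta$, is legitimately available for all $1\le q<\infty$; this is exactly why the whole argument must be carried out through Theorem \ref{Peller} rather than Theorem \ref{ThPSS}, and why one needs $f_\eta\in B_{\infty1}^{k+1}(\R)$ rather than merely $f_\eta^{(k+1)}\in C_b(\R)$. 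Once the lemma is in place, everything else is a transcription of Section \ref{s3} under the substitutions $\|\cdot\|_p\rightsquigarrow\|\cdot\|_q$, $c_{p,k}\rightsquigarrow{\rm const}_k$, $\|f^{(k)}\|_\infty\rightsquigarrow\|f\|_{B_{\infty1}^k}$, Theorem \ref{ThPSS}$\rightsquigarrow$Theorem \ref{Peller}.
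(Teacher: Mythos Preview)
Your proposal is correct and follows essentially the same route as the paper: the paper isolates the Besov analogues of Lemmas \ref{L1}, \ref{LMOId2}, \ref{L2} as separate statements (Lemmas \ref{L37Besov}, \ref{L38Besov}, \ref{L310Besov}) and then transcribes the induction of Theorem \ref{C0}, exactly as you outline. The only cosmetic differences are that the paper approximates $f$ by the Littlewood--Paley partial sums $f_m=\sum_{|k|\le m}f\ast w_k$ rather than a de la Vall\'ee--Poussin convolution, and it handles the $q=1$ case of the perturbation identity via the integral projective tensor product representation of $f^{[k]}$ for $f\in B_{\infty1}^k(\R)$ rather than by your (equally valid, and arguably simpler) restriction of the $\Sc^2$-identity to $\Sc^1\subset\Sc^2$.
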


The proof of Theorem \ref{Th33Besov} is completely analogous to the proof of Theorem \ref{C0} and is based on the counterparts of Lemmas \ref{L1}, \ref{LMOId2}, \ref{L2} for Besov functions stated below.

The following perturbation formula is a higher order extension of the analogous formulas proved in \cite{BS3} and \cite[Theorem 5.1 and Lemma 5.4]{Peller2006} for $n=1$ and $n=2$, respectively.

\begin{lemma}
\label{L37Besov}
Let $n\in\N$, $n\ge 2$, and $f\in B_{\infty1}^n(\R)\cap B_{\infty1}^{n-1}(\R)$.
Let $A_1,\ldots,A_{n-1},A,B$ be self-adjoint operators with $B-A\in \Sc^{q}$, $1\le q<\infty$, let $X_1,\ldots,X_{n-1}\in
\Sc^{q}$. Then, for every $i=1,\ldots,n$,
\begin{align}
\label{37B}
\nonumber
&T_{f^{[n-1]}}^{A_1,\ldots,A_{i-1},A,A_i,\ldots,A_{n-1}}(X_1,\ldots,X_{n-1})
-T_{f^{[n-1]}}^{A_1,\ldots,A_{i-1},B,A_i,\ldots,A_{n-1}}(X_1,\ldots,X_{n-1})\\
&=T_{f^{[n]}}^{A_1,\ldots,A_{i-1},A,B,A_i,\ldots,A_{n-1}}
(X_1,\ldots,X_{i-1},A-B,X_i,\ldots,X_{n-1}).
\end{align}
\end{lemma}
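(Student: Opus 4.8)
The plan is to mirror the proof of Lemma \ref{L1}, which established exactly the same perturbation formula under the hypotheses $f\in C^n(\R)$, $f^{(n-1)},f^{(n)}\in C_b(\R)$ for the $\Sc^p$-norm with $1<p<\infty$. The essential point is that every tool used in that proof has a Besov counterpart: the boundedness and norm estimates from Theorem \ref{ThPSS} are replaced by those of Theorem \ref{Peller}, and since $f\in B_{\infty1}^n(\R)\cap B_{\infty1}^{n-1}(\R)\subset C^n(\R)$ with $f^{(n-1)},f^{(n)}\in C_b(\R)$, all the multiple operator integrals appearing in \eqref{37B} are well defined as bounded multilinear maps from the appropriate products of $\Sc^{nq}$ (or $\Sc^q$) into $\Sc^q$.

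First I would reduce to the Hilbert–Schmidt case $q=2$. When $q=2$, the formula \eqref{37B} is exactly \cite[Corollary 4.4]{CLMSS}: indeed by Proposition \ref{L} the transformations $T_{f^{[r]}}$ of Definition \ref{MOIPSS} and the transformations $\Gamma^{\cdots}$ of Definition \ref{MOICLS} coincide on $\Sc^2\times\cdots\times\Sc^2$ for $f\in C^n(\R)$ with $f^{(n)}\in C_b(\R)$, which holds here. Next, for $1\le q<2$ we have $\Sc^q\subset\Sc^2$ isometrically into $\Sc^2$'s enclosing structure (more precisely $\norm{\cdot}_2\le\norm{\cdot}_q$), so \eqref{37B} for $X_1,\dots,X_{n-1},A-B\in\Sc^q$ follows immediately from the $q=2$ case. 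The remaining case $q>2$ is the substantive one and is handled by the same two-step density argument as in Lemma \ref{L1}: first prove \eqref{37B} when $X_1,\dots,X_{n-1}\in\Sc^2$ while $A-B\in\Sc^q$, and then approximate each $X_j$ by $\Sc^2$-operators in the $\Sc^q$-norm and pass to the limit using the continuity of the multiple operator integrals guaranteed by Theorem \ref{Peller}.

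For the inner step ($q>2$, $X_j\in\Sc^2$, $A-B\in\Sc^q$), I would follow the decomposition \eqref{l2p1}–\eqref{l2p7} verbatim: choose $K_m\in\Sc^2_{sa}$ with $\norm{A-B-K_m}_q\to0$, split $\T_{f^{[n-1]}}^{{\bf A},i,A}-\T_{f^{[n-1]}}^{{\bf A},i,B}$ into the piece $\T_{f^{[n-1]}}^{{\bf A},i,A-K_m}-\T_{f^{[n-1]}}^{{\bf A},i,B}$, which vanishes in $\Sc^q$-norm as $m\to\infty$ by resolvent-strong convergence of $A-K_m\to B$ together with the appropriate continuity statement, and the piece $\T_{f^{[n-1]}}^{{\bf A},i,A}-\T_{f^{[n-1]}}^{{\bf A},i,A-K_m}$, to which the already-established $\Sc^2$ case of \eqref{37B} applies, giving $\T_{f^{[n]}}^{{\bf A},i,A,A-K_m}(K_m)$; then one writes $K_m=(K_m-(A-B))+(A-B)$, uses Theorem \ref{Peller} to kill the first term, and uses an $\epsilon$-approximation of $A-B$ by $L\in\Sc^2$ plus continuity to handle the second. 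The main obstacle is verifying that the continuity-in-the-operators statement used at \eqref{l2p2} and \eqref{l2p5}—namely, if the self-adjoint operators converge in the resolvent-strong sense and the difference stays bounded in $\Sc^q$, then the corresponding multiple operator integrals converge in $\Sc^q$-norm—holds in the Besov setting; this is the analogue of \cite[Proposition 3.1]{CLMSS}, and I would either cite the corresponding continuity result available for the Peller/CLS construction of multiple operator integrals for $f\in B_{\infty1}^n(\R)$, or prove it directly by approximating $f^{[n]}$ in the relevant weak-$*$ topology using the separation-of-variables representation underlying Theorem \ref{Peller}. Once this continuity lemma is in place, the argument is a routine transcription of the proof of Lemma \ref{L1} with Theorem \ref{ThPSS} replaced by Theorem \ref{Peller} throughout.
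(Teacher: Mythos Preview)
Your proposal is correct but organized differently from the paper, and in one place you manufacture an obstacle that is not there.

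The paper does not give a detailed proof; the accompanying Remark explains that for $1<q<\infty$ the lemma is literally a special case of Lemma~\ref{L1}, since $f\in B_{\infty1}^n(\R)\cap B_{\infty1}^{n-1}(\R)$ implies $f\in C^n(\R)$ with $f^{(n-1)},f^{(n)}\in C_b(\R)$, and that only the case $q=1$ requires separate treatment, for which the paper invokes the integral projective tensor product representation of $f^{[k]}$ available for Besov symbols (the Peller machinery underlying Theorem~\ref{Peller}).

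Two comments on your route. First, for $q>2$ you propose to re-run the proof of Lemma~\ref{L1} and then flag as the ``main obstacle'' a Besov analogue of \cite[Proposition 3.1]{CLMSS}. No such analogue is needed: that proposition already applies to any $f\in C^n(\R)$ with $f^{(n)}\in C_b(\R)$, hence to your $f$, and in the proof of Lemma~\ref{L1} it is used only to obtain $\Sc^2$-convergence, from which $\Sc^q$-convergence follows via $\norm{\cdot}_q\le\norm{\cdot}_2$. So the entire range $1<q<\infty$ is covered by citing Lemma~\ref{L1} outright; there is nothing to redo and nothing to verify.

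Second, for $q=1$ your reduction to $q=2$ via $\Sc^1\subset\Sc^2$ is more elementary than the paper's approach. It is valid because the operator produced by Definition~\ref{MOIPSS} does not depend on which exponent one works in: the finite approximants $S_{\varphi,r}$ are independent of $q$, and $\Sc^1$-convergence implies $\Sc^2$-convergence, so the limits coincide as operators. You should state this consistency explicitly. The paper's argument instead works directly from the decomposition of $f^{[k]}$ as an element of the integral projective tensor product, which is precisely why the Besov hypothesis (and not merely $f^{(n-1)},f^{(n)}\in C_b(\R)$) is essential to its treatment of $q=1$.
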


\begin{remark}
We note that the result of Lemma \ref{L37Besov} for $1<q<\infty$ is a particular case of the result of Lemma \ref{L1}. The proof in the case $q=1$ is based on the integral projective tensor product representation of $f^{[k]}$ for $f\in B_{\infty1}^k(\R)$. The latter representation is not available for a general $f\in C^n(\R)$ with $f^{(n-1)},f^{(n)}\in C_b(\R)$, which is treated in Lemma \ref{L1}.
\end{remark}

As an immediate consequence of Lemma \ref{L37Besov}, we obtain the following analog of Lemma \ref{LMOId2}.

\begin{lemma}
\label{L38Besov}
Let $n\in\N$, $n\ge 2$, and $f\in B_{\infty1}^n(\R)\cap B_{\infty1}^{n-1}(\R)$.
Let $A_1,\ldots,A_{n-1},A,B$ be self-adjoint operators with $B-A\in\Sc^{q}$, $1\le q<\infty$,
let $X_1,\ldots,X_{n-1}\in\Sc^{q}$. Then,
\begin{align*}
&T_{f^{[n-1]}}^{A,\ldots,A}(X_1,\ldots,X_{n-1})-T_{f^{[n-1]}}^{B,\ldots,B}(X_1,\ldots,X_{n-1})\\
&=\sum_{i=1}^n T_{f^{[n]}}^{{\bf B}_{i-1},A,B,{\bf A}_{n-i}}(X_1,\ldots,X_{i-1},A-B,X_i,\ldots,X_{n-1}).
\end{align*}
\end{lemma}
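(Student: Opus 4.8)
The plan is to mimic the derivation of Lemma \ref{LMOId2} from Lemma \ref{L1}, with Lemma \ref{L37Besov} playing the role of Lemma \ref{L1}; no new analytic input is needed, so this will be a purely formal, finite telescoping argument.

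First I would write the left-hand side as a telescoping sum in which exactly one generating operator is switched from $A$ to $B$ at each step. With the notation ${\bf B}_{j}=\underbrace{B,\ldots,B}_{j}$, ${\bf A}_{j}=\underbrace{A,\ldots,A}_{j}$, one has
\begin{align*}
\big(T_{f^{[n-1]}}^{A,\ldots,A}-T_{f^{[n-1]}}^{B,\ldots,B}\big)(X_1,\ldots,X_{n-1})
=\sum_{i=1}^n\big(T_{f^{[n-1]}}^{{\bf B}_{i-1},{\bf A}_{n-i+1}}-T_{f^{[n-1]}}^{{\bf B}_i,{\bf A}_{n-i}}\big)(X_1,\ldots,X_{n-1}),
\end{align*}
each of the $n+1$ multiple operator integrals $T_{f^{[n-1]}}^{{\bf B}_{i},{\bf A}_{n-i}}$, $i=0,\ldots,n$, being a well-defined bounded $(n-1)$-linear map on $\Sc^q\times\cdots\times\Sc^q$ by Theorem \ref{Peller} (which allows arbitrary, in particular mixed, self-adjoint generating operators and only requires $f\in B_{\infty1}^{n-1}(\R)$). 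Next I would apply Lemma \ref{L37Besov} to the $i$-th difference, taking there $A_1=\cdots=A_{i-1}=B$ and $A_i=\cdots=A_{n-1}=A$: the two multiple operator integrals $T_{f^{[n-1]}}^{{\bf B}_{i-1},{\bf A}_{n-i+1}}$ and $T_{f^{[n-1]}}^{{\bf B}_i,{\bf A}_{n-i}}$ differ precisely by an $A$ versus a $B$ in the $i$-th slot, so Lemma \ref{L37Besov} gives
\begin{align*}
\big(T_{f^{[n-1]}}^{{\bf B}_{i-1},{\bf A}_{n-i+1}}-T_{f^{[n-1]}}^{{\bf B}_i,{\bf A}_{n-i}}\big)(X_1,\ldots,X_{n-1})
=T_{f^{[n]}}^{{\bf B}_{i-1},A,B,{\bf A}_{n-i}}(X_1,\ldots,X_{i-1},A-B,X_i,\ldots,X_{n-1}).
\end{align*}
Its hypotheses are met: $f\in B_{\infty1}^n(\R)\cap B_{\infty1}^{n-1}(\R)$ and $A-B=-(B-A)\in\Sc^q$. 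Summing over $i=1,\ldots,n$ yields the claimed identity.

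I do not expect any genuine obstacle. The only thing to check carefully is that the telescoping rearrangement is legitimate, which reduces to the boundedness of the intermediate mixed multiple operator integrals; this is handed to us by Theorem \ref{Peller}. In contrast to Lemma \ref{L37Besov} itself --- whose delicate case $q=1$ relies on the integral projective tensor representation of divided differences of Besov functions --- Lemma \ref{L38Besov} is, exactly as Lemma \ref{LMOId2} is deduced from Lemma \ref{L1}, an immediate algebraic corollary.
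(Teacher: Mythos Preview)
Your proposal is correct and follows exactly the paper's approach: the paper states Lemma \ref{L38Besov} as an ``immediate consequence of Lemma \ref{L37Besov}'' in direct analogy with how Lemma \ref{LMOId2} is deduced from Lemma \ref{L1}, and your telescoping argument is precisely that deduction written out in full.
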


We also need the analog of Lemma \ref{L2} proved below.

\begin{lemma}
\label{L310Besov}
Let $n\in\N$, $1\le i\le n$, and $f\in B_{\infty1}^n(\R)$.
Then, for every $\epsilon>0$ there exists $\delta>0$ such that for
all self-adjoint $A,A_1,\ldots,A_n$ and all
$X_1,\ldots,X_n\in\Sc^q$, $1\le q<\infty$, the estimate
\begin{align}
\label{310B}
\nonumber
&\big\|T_{f^{[n]}}^{A_1,\ldots,A_{i-1},A+X,A_i,\ldots,A_n}(X_1,\ldots,X_n)
-T_{f^{[n]}}^{A_1,\ldots,A_{i-1},A,A_i,\ldots,A_n}(X_1,\ldots,X_n)\big\|_q\\
&\le\epsilon\,\|X_1\|_q\cdots\|X_n\|_q
\end{align}
holds whenever $X\in\Sc^q_{sa}$ satisfies $\|X\|_q<\delta$.
\end{lemma}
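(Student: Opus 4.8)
The plan is to mimic the structure of the proof of Lemma \ref{L2}, replacing Theorem \ref{ThPSS} by Theorem \ref{Peller} and the role of $C_0(\R)$ by the Besov norm $\|\cdot\|_{B_{\infty1}^n}$, since we no longer have an extra derivative of $f$ to spare. The key point is that the Besov seminorm controls the operator-integral norm in the same multiplicative way, and that every $f\in B_{\infty1}^n(\R)$ can be approximated, in Besov seminorm, by smoother functions (a standard Littlewood--Paley truncation), which is exactly the substitute for the density argument $f^{(n)}\in C_0(\R)$ admits a $C_c^1$-approximation. Adopting the notations \eqref{notation1} and \eqref{notation2}, the object to estimate is
\begin{equation*}
\T_{f^{[n]}}^{{\bf A},i,A+X}-\T_{f^{[n]}}^{{\bf A},i,A}.
\end{equation*}

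First I would handle the case of a ``nice'' $f$. Suppose $f\in B_{\infty1}^n(\R)\cap B_{\infty1}^{n+1}(\R)$. Then Lemma \ref{L37Besov} (applied with $n+1$ in place of $n$, which requires $f\in B_{\infty1}^{n+1}(\R)\cap B_{\infty1}^{n}(\R)$) gives the perturbation identity
\begin{equation*}
\T_{f^{[n]}}^{{\bf A},i,A+X}-\T_{f^{[n]}}^{{\bf A},i,A}=\T_{f^{[n+1]}}^{{\bf A},i,A+X,A}(X),
\end{equation*}
and Theorem \ref{Peller} (with $n+1$) bounds the right-hand side by ${\rm const}_{n+1}\,\|f\|_{B_{\infty1}^{n+1}}\,\|X\|_q\,\|X_1\|_q\cdots\|X_n\|_q$. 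Choosing $\delta$ inversely proportional to ${\rm const}_{n+1}\,\|f\|_{B_{\infty1}^{n+1}}$ gives \eqref{310B} for such $f$.

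Next I would pass to a general $f\in B_{\infty1}^n(\R)$ by truncation. Write $f=\sum_{k}f*w_k$ in the Littlewood--Paley decomposition and set $f_N=\sum_{|k|\le N}f*w_k$ (adding back the polynomial part, which contributes nothing to $f^{[n]}$ for $n\ge1$ since divided differences annihilate polynomials of low degree, or more simply noting only the spectral behaviour on the relevant interval matters). Then $f_N\in B_{\infty1}^{n+1}(\R)$ and, crucially, $\|f-f_N\|_{B_{\infty1}^n}=\sum_{|k|>N}2^{nk}\|f*w_k\|_\infty\to0$ as $N\to\infty$. Splitting
\begin{align*}
\big\|\T_{f^{[n]}}^{{\bf A},i,A+X}-\T_{f^{[n]}}^{{\bf A},i,A}\big\|_q
&\le\big\|T_{(f-f_N)^{[n]}}^{{\bf A},i,A+X}\big\|_q+\big\|T_{(f-f_N)^{[n]}}^{{\bf A},i,A}\big\|_q\\
&\quad+\big\|T_{f_N^{[n]}}^{{\bf A},i,A+X}-T_{f_N^{[n]}}^{{\bf A},i,A}\big\|_q,
\end{align*}
the first two terms are $\le 2\,{\rm const}_n\,\|f-f_N\|_{B_{\infty1}^n}\,\|X_1\|_q\cdots\|X_n\|_q$ by Theorem \ref{Peller} and so $\le\frac{\varepsilon}{2}\|X_1\|_q\cdots\|X_n\|_q$ once $N$ is large; the third term is $\le\frac{\varepsilon}{2}\|X_1\|_q\cdots\|X_n\|_q$ once $\|X\|_q<\delta:=\delta(\varepsilon,N)$ by the nice case. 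This is exactly the two-step approximation of Lemma \ref{L2}, and it yields \eqref{310B}.

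The main obstacle I anticipate is purely bookkeeping about Besov spaces rather than operator theory: one must be sure that the truncated function $f_N$ genuinely lies in $B_{\infty1}^{n+1}(\R)$ (a finite Littlewood--Paley sum is smooth and band-limited, so all higher Besov seminorms are finite) and that the divided-difference operator integrals for $f_N$ are the ones appearing in Definition \ref{MOIPSS} (guaranteed by \cite[Lemma 3.5]{PSS}, as recalled in the text, since $f_N$ lies in the relevant Besov class). Everything else is a verbatim transcription of the $\Sc^p$ argument with the substitutions indicated, and the use of $\|\cdot\|_{nq}\le\|\cdot\|_q$ is not even needed here since Theorem \ref{Peller} already supplies the cruder bound in $\|\cdot\|_q$.
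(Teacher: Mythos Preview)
Your proposal is correct and follows essentially the same route as the paper's proof: define the Littlewood--Paley truncation $f_m=\sum_{|k|\le m}f*w_k\in B_{\infty1}^n\cap B_{\infty1}^{n+1}$, use Lemma~\ref{L37Besov} and Theorem~\ref{Peller} to get the estimate for $f_m$, and then split into three terms using $\|f-f_m\|_{B_{\infty1}^n}\to 0$. The only difference is cosmetic; your parenthetical worry about a ``polynomial part'' is unnecessary here and the paper simply omits it.
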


\begin{proof}

In this proof we adopt the notations \eqref{notation1} and \eqref{notation2}.

For each $m\in\N$, let $f_m=\sum_{k=-m}^m f\ast w_k$, where $w_k$ is defined in \eqref{wk}. Then,
$f_m\in B_{\infty1}^n(\R)\cap B_{\infty1}^{n+1}(\R)$ and
\begin{align*}
\lim_{m\rightarrow\infty}\|f-f_m\|_{B_{\infty1}^n}=0.
\end{align*}
By Lemma \ref{L37Besov},
\begin{align}
\label{L1corB}
\T_{f_m^{[n]}}^{{\bf A},i,A+X}-\T_{f_m^{[n]}}^{{\bf A},i,A}
=\T_{f_m^{[n+1]}}^{{\bf A},i,A+X,A}(X).
\end{align}
By the representation (\ref{L1corB}) and Theorem \ref{Peller},
\begin{align}
\label{310B1}
\big\|\T_{f_m^{[n]}}^{{\bf A},i,A+X}-\T_{f_m^{[n]}}^{{\bf A},i,A}\big\|_q
\le {\rm const}_{n+1}\,\|f_m\|_{B_{\infty1}^{n+1}}\|X\|_q\|X_1\|_q\cdots\|X_n\|_q.
\end{align}

Given $\epsilon>0$, let $m\in\N$ be such that
\begin{align}
\label{310B2}
\|f-f_m\|_{B_{\infty1}^n}<\frac{\epsilon}{4\,{\rm const}_n}.
\end{align}
We have
\begin{align}
\label{310B3}
\big\|\T_{f^{[n]}}^{{\bf A},i,A+X}-\T_{f^{[n]}}^{{\bf A},i,A}\big\|_q\le\big\|T_{(f-f_m)^{[n]}}^{{\bf A},i,A+X}\big\|_q
+\big\|\T_{(f-f_m)^{[n]}}^{{\bf A},i,A}\big\|_q+\big\|T_{f_m^{[n]}}^{{\bf A},i,A+X}-T_{f_m^{[n]}}^{{\bf A},i,A}\big\|_q.
\end{align}
By Theorem \ref{Peller} and \eqref{310B2},
\begin{align}
\label{310B4}
\nonumber
\big\|\T_{(f-f_m)^{[n]}}^{{\bf A},i,A+X}\big\|_q
+\big\|\T_{(f-f_m)^{[n]}}^{{\bf A},i,A}\big\|_q
&\le 2\, {\rm const}_n\,\|f-f_m\|_{B_{\infty1}^n}\|X_1\|_q\cdots\|X_n\|_q\\
&\le\frac12\,\epsilon\,\|X_1\|_q\cdots\|X_n\|_q.
\end{align}
Combining \eqref{310B1}, \eqref{310B3}, \eqref{310B4} guarantees that if
\begin{align*}
\|X\|_q<\delta=\frac{\epsilon}{2\,{\rm const}_{n+1}\,\|f_m\|_{B_{\infty1}^{n+1}}}\,,
\end{align*}
then \eqref{310B} holds.
\end{proof}

\begin{proof}[Proof of Theorem \ref{Th33Besov}]
The result is proved by induction on $n$. Note that by a known property of Besov spaces, $B_{\infty1}^1(\R)\cap B_{\infty1}^n(\R)$ contains $B_{\infty1}^k(\R)$ for every $k=1,\ldots,n$.

The base of induction, case $n=1$, is known.
The Fr\'{e}chet differentiability and (\ref{**2}) for $k=1$ follow
from the G\^{a}teaux differentiability established in \cite[Theorem 2]{Peller1990}, properties of double operator integrals derived in \cite{BS3}, properties of Besov spaces, and the fact that every G\^{a}teaux differentiable function whose derivative is a bounded linear operator continuously depending on the point of differentiation (see \cite[Lemma 1.15]{Schwartz}) is Fr\'{e}chet differentiable.

We have
\begin{align*}
&D_1^1 f(A+X)(X_1)-D_1^1 f(A)(X_1)\\
&=\big(T_{f^{[1]}}^{A+X,A+X}(X_1)-T_{f^{[1]}}^{A+X,A}(X_1)\big)+\big(T_{f^{[1]}}^{A+X,A}(X_1)-T_{f^{[1]}}^{A,A}(X_1)\big).
\end{align*}
Thus, by Lemma \ref{L310Besov}, the Fr\'{e}chet differential of $f$ is continuous.

The inductive step is proved along the lines of the one in Theorem \ref{C0} with replacement of Lemmas \ref{L1}, \ref{LMOId2}, \ref{L2} by Lemmas \ref{L37Besov}, \ref{L38Besov}, \ref{L310Besov}, respectively.
\end{proof}

\bigskip

\paragraph{\bf Acknowledgement.}
A.S. is grateful to the Universit\'{e} de Franche-Comt\'{e}, Besan\c{c}on for hospitality
during her work on this project. The authors are grateful to the anonymous referee for the careful reading and relevant suggestions regarding the presentation of some of the proofs.

\end{document}